 \newtheorem{theorem}{Theorem}[section]
 \newtheorem{lemma}[theorem]{Lemma}
 \newtheorem{proposition}[theorem]{Proposition}
 \newtheorem{corollary}[theorem]{Corollary}
 \newtheorem{definition}[theorem]{Definition}
  \theoremstyle{definition}
 \newtheorem{example}[theorem]{Example}
 \newtheorem{remark}[theorem]{Remark}
\renewcommand{\graph}{\mathrm{graph}}
\newcommand{\cC}{\mathcal{C}}
\newcommand{\ZZ}{\ensuremath{\mathbb Z}}
\newcommand{\RR}{\ensuremath{\mathbb R}}
\newcommand{\TT}{\ensuremath{\mathbb T}}
\newcommand{\pd}[1]{\frac{\partial}{\partial #1}} 
\newcommand{\pr}{\mathrm{p}^{\mathrm{v}}}
\newcommand{\Defor}{\mathsf{Def}}
\newcommand{\Ham}{\mathsf{Ham}}
\newcommand{\Sym}{\mathsf{Sym}}
\newcommand{\Pois}{\mathsf{Pois}}
\newcommand{\Moduli}{\mathcal{M}}
\newcommand{\gauge}{\mathsf{gauge}}
\newcommand{\ex}{\mathsf{ext}}
\newcommand{\para}{\kern0.2em{\backslash} \kern-0.7em {\backslash} \kern0.2em }
\newcommand{\parap}{          {\backslash} \kern-0.7em {\backslash} \kern0.2em }
\begin{document}

 \title{Equivalences of coisotropic submanifolds}

\author{Florian Sch\"atz}
\email{florian.schaetz@gmail.com}
\address{Centre for Quantum Geometry of Moduli Spaces, Aarhus University,
Ny Munkegade 118, DK-8000 Aarhus C, Denmark.
}
 
 \author{Marco Zambon}
\email{marco.zambon@uam.es, marco.zambon@icmat.es, marco.zambon@wis.kuleuven.be}
\address{Universidad Aut\'onoma de Madrid (Departamento de Matem\'aticas) and ICMAT(CSIC-UAM-UC3M-UCM),
Campus de Cantoblanco,
28049 - Madrid, Spain. Current address: KU Leuven, Department of Mathematics, Celestijnenlaan 200B box 2400, BE-3001 Leuven, Belgium.}

\subjclass[2010]{53D05, 16E45}

 \begin{abstract}
We study the role that Hamiltonian and symplectic diffeomorphisms
play in the deformation problem of coisotropic submanifolds. We prove that the action by Hamiltonian diffeomorphisms
corresponds to the gauge-action of the $L_\infty$-algebra of Oh and Park.
Moreover we introduce the notion of extended gauge-equivalence
and show that in the case of
 Oh and Park's $L_\infty$-algebra
one recovers the action of symplectic isotopies on coisotropic submanifolds.
Finally, we consider the transversally integrable case in detail.
\end{abstract}

\maketitle

\setcounter{tocdepth}{1} 
\tableofcontents

 \section*{Introduction}
 
Coisotropic submanifolds form an important class of sub-objects in symplectic and Poisson geometry.
They naturally generalize Lagrangian submanifolds,
play an important role in the theory of constraints and also
appear in theoretical physics in the form of ``branes'', i.e. boundary conditions of
sigma models \cite{KapustinOrlov,CattaneoFelder}.

In this note we consider coisotropic deformations inside a symplectic manifold.
The nearby deformations of a Lagrangian submanifold $L$ are well-understood:
by Weinstein's normal form theorem, one can replace the ambient symplectic
manifold by the cotangent bundle $T^*L$. The graph of a $1$-form $\alpha$
is Lagrangian if and only if $\alpha$ is closed.
If one identifies closed $1$-forms which are related through an Hamiltonian isotopy,
one arrives at the first de Rham cohomology group $H^1(L,\RR)$ of $L$ as
the appropriate moduli space of nearby Lagrangian deformations.

The generalization of these statements to coisotropic submanifolds is not obvious, since
the space of coisotropic deformations is not linear and not even modelled on a topological vector space,
see \cite{coisoemb,OP}.
However, the general pattern of deformation theory teaches us that every deformation problem\footnote{... in characteristic zero...} should be captured by differential graded Lie algebras or their homotopical cousins, known as $L_\infty$-algebras.
That this is indeed the case was established by Oh and Park in \cite{OP}.
To be more precise, Oh and Park constructed an $L_\infty$-algebra that controls the formal deformation
problem for coisotropic submanifolds. In the special case of a Lagrangian submanifold $L$,
their construction recovers the de Rham complex of $L$.

In \cite{OPPois}, we studied convergence issues arising in the framework of \cite{OP}.
One finds that 
the {\em Maurer-Cartan equation}, which replaces the condition of being closed from the Lagrangian case,
is always convergent, and  that it converges to zero if and only if one is dealing with a coisotropic deformation.\footnote{For an alternative
treatment of the coisotropic deformation problem in terms of a Maurer-Cartan equation, see \cite{FlorianBFV}.}

Having established a firmer link to actual geometric deformations, it is natural to turn attention to the geometric symmetries
that are present in the problem. In particular, one might wonder how the actions of Hamiltonian and symplectic isotopies on the space
of coisotropic deformations can be understood. A natural symmetry acting on  Maurer-Cartan elements of  Oh and Park's $L_\infty$-algebra
are the inner
automorphisms, known as {\em gauge-transformations}. Our main result is  that these agree with the action by Hamiltonian isotopies, while 
the action by symplectic isotopies agrees with certain extended gauge-equivalences, which we specify below.\\

In Section \ref{section: Ham diffeos} we deal with Hamiltonian isotopies. It turns out that the gauge-transformations of Oh and Park's $L_\infty$-algebra
correspond to certain special Hamiltonian isotopies. The remaining problem is to show that any Hamiltonian isotopy
can be reduced to such a special one. This is parallel to the Lagrangian situation: there the main task
is also to show that an arbitrary Hamiltonian isotopy can be reduced to a function $f$ on the Lagrangian submanifold, which acts on the space of closed $1$-forms (whose graphs we are interested in) simply by $\alpha \mapsto \alpha + df$.
We establish the appropriate generalization in Theorem \ref{cor:3agree}, Subsection \ref{subsection: Ham diffeos - equivalences match}.
As a consequence, we identify
$$\frac{\{\text{coisotropic submanifolds}\}}{\text{Hamiltonian isotopies}}\cong
 \frac{\{\text{Maurer-Cartan elements}\}}{\text{gauge-equivalences}},$$
which is the content of Theorem \ref{theorem: Hamiltonian equivalence}.
For an alternative treatment within the BFV-formalism we refer to the article \cite{FlorianModuli} by
the first named author.

Section \ref{section: symplectomorphisms} is concerned with symplectic isotopies.
Given a Lagrangian submanifold, any of its Lagrangian deformations is related to the original submanifold
by a symplectomorphism, so we do not obtain an interesting moduli space. In the general coisotropic case
the situation is much more complicated and we do obtain another reasonable equivalence relation
on the space of deformations by considering symplectic isotopies.
In order to fit this into the algebraic framework, 
we review the construction of Oh and Park's $L_\infty$-algebra \cite{OP}\cite{CaFeCo2}
using  Voronov's derived bracket construction \cite{vor1,vor2}.

We show that every $L_\infty$-algebra which arises through Voronov's construction comes along
with additional automorphisms. As a consequence, we obtain more ways to identify Maurer-Cartan elements.
We refer to this extended equivalence relation as {\em extended gauge-equivalence}.
The content of Theorem \ref{cor:3agree - sym}, Subsection \ref{subsection: symplectomorphisms - equivalences coincide} is that if one applies this {construction} to Oh and Park's $L_\infty$-algebra,
one precisely recovers the action of symplectic isotopies on the space of coisotropic deformations.
As a consequence, we {obtain the identification}
$$\frac{\{\text{coisotropic submanifolds}\}}{\text{symplectic isotopies}}\cong
 \frac{\{\text{Maurer-Cartan elements}\}}{\text{extended gauge-equivalences}},$$
see Theorem \ref{theorem: symplectic equivalence}.

In Section \ref{section: transversal}, we consider coisotropic submanifolds which are transversally integrable.
This regularity condition allows one to make some of the previous constructions more explicit. In particular,
one can give a formula for nearby coisotropic deformations which are obtained by an Hamiltonian
or symplectic isotopy from the original coisotropic submanifold, see Proposition \ref{prop:section}.

{In Appendix \ref{appendix: fibrewise entire} we discuss the extension of our results to fibrewise entire Poisson structures.
In \cite{OPPois} it was shown that the coisotropic deformation
problem for those Poisson structures is also controlled by an $L_\infty$-algebra. Most of the results established
in the bulk of the paper carry over to the case of fibrewise entire Poisson structures. We explain the necessary modifications
in the appendix.}
\\

\noindent{\bf Organization of the paper:}
 In Section \ref{section: pre-symplectic geometry}
we recall background material on coisotropic submanifolds.
In Section \ref{section: coisotropic submanifolds} we review the results
about deformations of coisotropic submanifolds which are relevant in the subsequent discussion.
In particular, we introduce Oh and Park's $L_\infty$-algebra and review
the relation between its Maurer-Cartan elements and the deformation problem.
In Section \ref{section: Ham diffeos} we discuss Hamiltonian isotopies,
while in Section \ref{section: symplectomorphisms} we deal with symplectic isotopies.
In Section \ref{section: transversal}, we consider the case of transversally integrable submanifolds.
{Finally, Appendix \ref{appendix: fibrewise entire} describes the extension of our results
to fibrewise entire Poisson structures.}
\\
 
\noindent{\bf Comparison with the literature:}
While we were completing this note, the preprint 
\cite{LOTVcoiso} by
L{\^e}, Oh, Tortorella and  Vitagliano
 appeared. It considers coisotropic deformations in the very general setting of abstract Jacobi manifolds, which include Poisson and symplectic manifolds as special cases.
 There is an overlap between the results presented there in \cite[Subsection 4.4]{LOTVcoiso} - once specialized to the symplectic case - and one of the main sections of the present note, namely Section 3.
In particular,  
 Thm. \ref{cor:3agree}
(i.e. the equivalence of 
Hamiltonian equivalence and  gauge-equivalence, under a compactness assumption) corresponds to  \cite[Corollary 4.24]{LOTVcoiso}. 
Notice that in the latter the assumption on the compactness of the coisotropic submanifold is omitted.\\

 \noindent{\bf Acknowledgements:}
M.Z. thanks Luca Vitagliano and Alberto Mart\'in Zamora for useful conversations. In particular, the proof of Lemma \ref{lem:techhard} was communicated to us by Luca Vitagliano. Moreover we thank H\^ong V\^an L\^e for useful comments on a draft-version of this note.
 Last but not least we thank the referee for the helpful suggestions which improved the manuscript.

 F.S. was supported by the Center of Excellence Grant ``Centre for Quantum Geometry of Moduli Spaces'' from the Danish National Research Foundation (DNRF95).
 M.Z. was partially supported by grants  
MTM2011-22612 and ICMAT Severo Ochoa  SEV-2011-0087 (Spain), and
Pesquisador Visitante Especial grant  88881.030367/2013-01 (CAPES/Brazil).
 
\section{(Pre-)Symplectic Geometry}\label{section: pre-symplectic geometry}

We summarize background information about coisotropic submanifolds and associated structures.

\begin{remark}
Throughout this paper, $(M,\omega)$ will denote a symplectic manifold.
 {Let $C$ be a submanifold of $M$ and $E\to C$ a vector subbundle of $TM\vert_C$.}
The symplectic orthogonal $E^\perp$ to $E$ is the vector bundle
whose fibre over $x\in C$ is
$$E^\perp_x := \{e\in T_xM \textrm{ such that } \forall v\in  {E_x} \textrm{ we have } \omega_x(e,v)=0\}.$$
Another way to characterize $E^\perp$ is as the pre-image of the annihilator $E^\circ$
of $E$ under the sharp-map
$$ \omega^{\sharp}: TM \to T^*M, \quad v\mapsto \omega(v,-).$$
\end{remark}
 
\begin{definition}\label{def:coisosection}
A submanifold $C$ of $(M,\omega)$ is {\bf coisotropic} if the symplectic orthogonal $TC^\perp$ to $TC$
is contained in $TC$.
\end{definition}

\begin{remark}
An alternative way to express the coisotropicity of $C$ 
is in terms of the Poisson bivector field $\Pi$ associated to $\omega$,
defined by the requirement that $\Pi^{\sharp}: T^*M \to TM, \quad \xi\mapsto \Pi(\xi,-)$ equals $-(\omega^{\sharp})^{-1}$.
Let $\mathcal{X}^{\bullet}(M)$ denote the space of multivector-fields
on $M$, i.e. sections of $ \wedge TM$.
There is a natural projection map
$$ P: \chi^{\bullet}(M) \to \Gamma(\wedge (TM\vert_C/TC)),$$
which is given by restricting  multivector-fields to $C$, followed by composition with the natural projection
 $\wedge TM\vert_C \to \wedge (TM\vert_C/TC)$.
The submanifold $C$ is coisotropic if and only if the Poisson bivector field $\Pi$
lies in the kernel of $P$.
\end{remark}

\begin{definition}
A two-form $\eta$ on $C$ that is closed and whose rank is constant
is called a {\bf pre-symplectic structure}.
\end{definition}

\begin{lemma}
Let $C$ be a coisotropic submanifold of $(M,\omega)$.
The pull back of $\omega$ to $C$ along the inclusion $\iota: C\hookrightarrow M$ is a closed two-form of constant
rank  $2\dim C - \dim M$. We denote   {this pre-symplectic structure} by $\omega_C$.
\end{lemma}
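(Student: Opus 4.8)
The closedness of $\omega_C := \iota^*\omega$ is immediate and requires no work beyond the naturality of the exterior derivative: since pullback commutes with $d$ and $\omega$ is symplectic (hence closed), we have $d\omega_C = d(\iota^*\omega) = \iota^*(d\omega) = 0$. The substance of the statement is therefore the assertion that $\omega_C$ has \emph{constant} rank, together with the identification of that rank as $2\dim C - \dim M$. The plan is to analyze the rank pointwise by computing the kernel of $\omega_C$ at an arbitrary point $x \in C$, and to show that this kernel has the same dimension at every point.

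First I would identify the kernel explicitly. By definition, a vector $v \in T_xC$ lies in $\ker(\omega_C)_x$ precisely when $\omega_x(v,w) = 0$ for all $w \in T_xC$, i.e. when $v \in (T_xC)^\perp$. Hence
$$\ker(\omega_C)_x = T_xC \cap (T_xC)^\perp.$$
At this stage the coisotropicity hypothesis enters decisively: by Definition \ref{def:coisosection} we have $(T_xC)^\perp \subseteq T_xC$, so the intersection collapses and $\ker(\omega_C)_x = (T_xC)^\perp$. Next I would compute the dimension of $(T_xC)^\perp$ using the nondegeneracy of $\omega$. Writing $E = T_xC$, the characterization recalled in the opening remark gives $E^\perp = (\omega^\sharp)^{-1}(E^\circ)$, where $E^\circ \subseteq T_x^*M$ is the annihilator. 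Since $\omega^\sharp$ is a linear isomorphism (as $\omega$ is symplectic), $\dim(T_xC)^\perp = \dim E^\circ = \dim M - \dim C$.

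Combining these, the rank of $\omega_C$ at $x$ is
$$\dim T_xC - \dim\ker(\omega_C)_x = \dim C - (\dim M - \dim C) = 2\dim C - \dim M,$$
which is a number independent of $x$. This simultaneously yields the stated value of the rank and its constancy, so $\omega_C$ is indeed a pre-symplectic structure. I do not anticipate a genuine obstacle here, as every step is elementary linear algebra applied fibrewise; the only point deserving emphasis is that the \emph{constancy} of the rank is not automatic for a general submanifold but is a direct consequence of coisotropicity, which forces the pointwise kernel to coincide with $(T_xC)^\perp$ and hence to have locally constant dimension $\dim M - \dim C$.
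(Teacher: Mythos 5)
Your proof is correct and is exactly the argument the paper intends: the paper states this lemma without proof, but its identification $\ker(\omega_C)=K=TC^\perp$ in Remark~1.6 confirms that the kernel computation $\ker(\omega_C)_x = T_xC\cap (T_xC)^\perp = (T_xC)^\perp$ via coisotropicity, together with $\dim (T_xC)^\perp = \dim M - \dim C$ from the isomorphism $\omega^\sharp$, is the intended route. No gaps.
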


\begin{remark}\label{rem:isom}
\hspace{0cm}
\begin{enumerate}
 \item Let $\eta$ be any pre-symplectic structure on $C$. The closedness of $\eta$ implies that the kernel
 of $\eta^{\sharp}: TC \to T^*C$ is an involutive subbundle of $TC$. Hence $C$ is equipped with a foliation, called the characteristic foliation of
 $\eta$.
 \item We now consider the case of the pre-symplectic structure $\omega_C$ associated to a coisotropic submanifold
 $C$ of $(M,\omega)$. We always denote the kernel of $\omega_C$
 by $K (=TC^\perp)$ and the corresponding characteristic foliation by $\mathfrak{F}$
 in this situation.
 Moreover, observe that, in this situation, the vector bundle morphism
 $$
\xymatrix{
TM\vert_C \ar[r]^{\omega^{\sharp}} & T^*M\vert_C \ar[r] & (TC^\perp)^*
}
$$
is surjective and has kernel
$TC$.
Hence we obtain an isomorphism between the normal bundle $TM\vert_C / TC$ and $K^*$.
\end{enumerate}
\end{remark}

\begin{remark}
 We saw that every coisotropic submanifold comes along with a pre-symplectic structure.
An important observation is that this can be reversed: {\em every} pre-symplectic
structure can be realized as the pre-symplectic structure associated to a coisotropic submanifold. Moreover, this
realization is essentially unique.
We start with a pre-symplectic structure $\eta$ on a manifold $C$. Let $K$ be the kernel of $\eta^{\sharp}$ and $G$ a complement to $K$. The choice of $G$ yields an inclusion
$j: K^*\hookrightarrow T^*C$. Recall that $T^*C$ carries a canonical symplectic structure $\omega_{T^*C}$.
We now combine $\eta$ and $\omega_{T^*C}$ into the two-form
 $$\Omega:= \pi^*\omega_C + j^* \omega_{T^*C}.$$
on $K^*$, where $\pi$ denotes the projection map $K^*\to C$. 

The two-form $\Omega$ restricts to $\eta$ on $C$ and is symplectic
on a tubular neighborhood $U$ of the zero section $C \subset K^*$. We refer to $(U,\Omega)$  as the {\em local
symplectic model} associated to the
the pre-symplectic manifold $(C,\eta)$.

The local symplectic model depends on the choice of complement $G$ to $K$, but choosing different
complements will lead to local symplectic models which are symplectomorphic in neighborhoods of $C$, and one
can choose a symplectomorphism that restricts to the identity on $C$. Hence we will
speak of {\em the} local symplectic model of $(C,\eta)$.

The following theorem of Gotay \cite{Gotay} asserts that actually {\em every} symplectic manifold $(M,\omega)$
into which $C$ embeds as a coisotropic submanifold, such that $\omega_C=\eta$, looks
like the local symplectic model in a neighborhood of $C$:
\end{remark}

\begin{theorem}[Gotay \cite{Gotay}]\label{thm:Gotay}
Let $C$ be a coisotropic submanifold of a symplectic manifold $(M,\omega)$.
There is a symplectomorphism $\psi$ between a tubular neighborhood   of $C$ inside $M$ and a tubular neighborhood of $C$ inside its local symplectic model
$(U,\Omega)$. Moreover, $\psi$ can be chosen such that the restriction of $\psi$ to $C$ is the identity.
\end{theorem}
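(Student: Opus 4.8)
The plan is to follow Weinstein's deformation method, i.e. the Moser trick in its relative form. The statement is a normal-form result, and the natural strategy is first to produce a diffeomorphism between the two neighborhoods that matches the two symplectic forms \emph{to first order along $C$}, and then to correct it to an honest symplectomorphism by flowing along a time-dependent vector field that vanishes on $C$.

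First I would construct a diffeomorphism $\phi$ from a neighborhood of $C$ in $M$ onto a neighborhood of the zero section $C$ inside $U\subset K^*$, with $\phi\vert_C=\mathrm{id}_C$. By the tubular neighborhood theorem a neighborhood of $C$ in $M$ is diffeomorphic to a neighborhood of the zero section in the normal bundle $TM\vert_C/TC$, and the key input is the canonical isomorphism $TM\vert_C/TC\cong K^*$ of Remark \ref{rem:isom}(2), induced by $\omega^\sharp$; composing yields $\phi$. The crucial requirement is that $\phi$ match the symplectic data to first order, i.e. that $(\phi^*\Omega)_x=\omega_x$ as bilinear forms on $T_xM$ for every $x\in C$. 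This is a pointwise linear-symplectic computation: for $x\in C$ the splitting $T_x(K^*)=T_xC\oplus K^*_x$ decomposes $\Omega$ into $\omega_C$ on $T_xC\times T_xC$, the canonical pairing between $T_xC$ and the fibre $K^*_x$ coming from $j^*\omega_{T^*C}$, and zero on the vertical block; on the $M$-side the identical block structure is exactly the content of $TM\vert_C/TC\cong K^*$ together with $\omega_C=\iota^*\omega$. Matching the off-diagonal pairing and the vertical block amounts to choosing the differential of $\phi$ along $C$ appropriately, which can be done fibrewise and arranged to depend smoothly on $x$.

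With such a $\phi$ in hand, set $\omega_0:=\omega$ and $\omega_1:=\phi^*\Omega$ on a common neighborhood of $C$. Both are closed, agree along $C$, and---since nondegeneracy is an open condition and they coincide on $C$---the interpolation $\omega_t:=\omega_0+t(\omega_1-\omega_0)$ is symplectic on a possibly smaller neighborhood for all $t\in[0,1]$. The difference $\sigma:=\omega_1-\omega_0$ is closed and vanishes along $C$, so by the relative Poincar\'e lemma there is a one-form $\beta$ with $d\beta=\sigma$ and $\beta\vert_C=0$. Defining $X_t$ by $\iota_{X_t}\omega_t=-\beta$, the vanishing of $\beta$ on $C$ forces $X_t\vert_C=0$, so the flow $\psi_t$ exists near $C$ for $t\in[0,1]$ and fixes $C$ pointwise. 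The Cartan-formula computation
$$\frac{d}{dt}\psi_t^*\omega_t=\psi_t^*\big(\mathcal{L}_{X_t}\omega_t+\sigma\big)=\psi_t^*\big(d\,\iota_{X_t}\omega_t+\sigma\big)=\psi_t^*\big(-d\beta+\sigma\big)=0$$
then gives $\psi_1^*\omega_1=\omega_0$, i.e. $(\phi\circ\psi_1)^*\Omega=\omega$. Hence $\psi:=\phi\circ\psi_1$ is the desired symplectomorphism, and $\psi\vert_C=\mathrm{id}_C$ because both $\phi$ and $\psi_1$ fix $C$.

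The main obstacle is the first-order matching in the construction of $\phi$: once the two forms agree along $C$ the Moser step is essentially automatic, so the real content lies in identifying the normal geometry of $C$ in $M$ with that of the model in a way that makes the symplectic forms coincide on $T_xM$ for all $x\in C$. This is exactly where the isomorphism $TM\vert_C/TC\cong K^*$ of Remark \ref{rem:isom}(2) and the explicit expression $\Omega=\pi^*\omega_C+j^*\omega_{T^*C}$ enter, and one must verify that the needed identification can be chosen to vary smoothly along $C$.
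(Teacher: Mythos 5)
Your argument is correct: it is precisely the standard relative Moser/Weinstein deformation proof of Theorem \ref{thm:Gotay}, which is also how the result is established in Gotay's original paper \cite{Gotay} — the present paper only cites that reference and gives no proof of its own. The one step you state loosely, matching the forms along $C$, comes down to choosing for each $x\in C$ a Lagrangian complement $W_x$ to $K_x$ inside the symplectic orthogonal of $G_x$ (so that $W_x$ is isotropic, annihilates $G_x$, and pairs nondegenerately with $K_x$, reproducing the block structure of $\Omega=\pi^*\omega_C+j^*\omega_{T^*C}$ at the zero section); such complements can be chosen smoothly in $x$ by standard arguments, so the proof goes through as written.
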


 Throughout the rest of the paper we fix a local symplectic model $(U,\Omega)$ of the coisotropic submanifold $C$.
Since
 the local symplectic model is a neighborhood of the zero section in a vector bundle $E \to C$,  it
comes equipped with an embedding of the zero section $C$ in $U$, with coisotropic image, as well as with a surjective submersion $\pi \colon U\to C$. 
Recall that 
$E$ is isomorphic to $K^*$, the dual to the kernel of the pre-symplectic structure $\omega_C$.
 To avoid unnecessary confusion about signs, we also assume that $U$ was chosen invariant with respect to fibrewise multiplication by $-1$.
 
 Summarizing, the setting we assume in the rest of the paper is: 
 \begin{center}
\fbox{
\parbox[c]{12.6cm}{\begin{center}
$(M,\omega)$ is a   symplectic manifold,\\
$C$ is a coisotropic submanifolds with induced presymplectic form $\omega_C$,\\
 $(U,\Omega)$ is the local symplectic model,\\
 where $U$ is a neighborhood of the zero section in a vector bundle $E\to C$.
\end{center}
}}
 \end{center}
 
 \section{Deformations of coisotropic submanifolds} \label{section: coisotropic submanifolds}
 
 We set up the problem of deforming a given coisotropic submanifold and {review}
 some relevant results, setting the stage for the subsequent development. In particular, the precise relationship between the deformation problem and the $L_\infty[1]$-algebra
 of Oh and Park \cite{OP,OPPois} is recalled.

 \subsection{The deformation problem}\label{subsection: coisotropic submanifolds - deformation problem}

It is natural to wonder how the ``space of coisotropic submanifolds close to $C$'' looks like, i.e. we ask

\begin{center}
{\em Which deformations of $C$ are coisotropic submanifolds of $(U,\Omega)$?}
\end{center}

\begin{definition}\label{def:coisoDefor}
The space of {\bf coisotropic sections} of $U$ is
$$ \Defor_U(C) := \{ s \in \Gamma(U): \textrm{the graph of }  s \textrm{ is coisotropic inside } (U,\Omega)\}.$$
\end{definition}

We now translate the above question into:

\begin{center}
{\em How can one describe the set $\Defor_U(C)$?}
\end{center}

Theorem \ref{thm:symplectic_case} in Subsection \ref{subsection: coisotropic submanifolds - L_infty} provides an answer to this question.

\subsection{Infinitesimal deformations}\label{subsection: coisotropic submanifolds - infinitesimal deformations}

We discuss the infinitesimal version of the space $\Defor_U(C)$, which
turns out to be closely related to the foliated de Rham complex.

\begin{remark}\label{rem:dP}
Recall from Section \ref{section: pre-symplectic geometry} that the kernel $K$
of the pre-symplectic structure $\omega_C$ on $C$ is involutive, and {that} the associated foliation $\mathfrak{F}$ of $C$
is called the characteristic foliation. One has the following
foliated version of the de Rham complex:
\begin{eqnarray*}
\Omega_\mathfrak{F}(C) &:=&  \Gamma(\wedge K^*),\\
(d_\mathfrak{F}\omega)(s_0,\dots,s_k) &:=& \sum_{i=0}^k (-1)^i s_i(\omega(s_0,\dots, s_{i-1},\widehat{s_{i}},s_{i+1},\dots s_k) \\
&& + \sum_{i<j} (-1)^{i+j}\omega([s_i,s_j],s_1,\dots,\widehat{s_i},\dots, \widehat{s_j},\dots s_k).
\end{eqnarray*}
In Remark \ref{rem:isom},
 we obtained a vector bundle isomorphism
$$ E = TM\vert_C /TC \to K^*,$$
by restricting $\omega^{\sharp}$. This yields an isomorphism $\Gamma(\wedge E) \cong \Gamma(\wedge K^*) = \Omega_\mathfrak{F}(C)$.
The foliated de Rham operator $d_\mathfrak{F}$ then corresponds to the operator
\begin{equation*}
\label{eq:dFP}
 \xi \mapsto P([\Pi,\xi]),
\end{equation*}
where $\xi \in \Gamma(\wedge E)$ is interpreted as a vertical multivector-field that is constant along the fibres of $E$,
and $[\cdot,\cdot]$ is the Schouten-Nijenhuis bracket, see \cite[Proof of Prop 3.5]{OPPois} for more details.
\end{remark}

\begin{remark}\label{rem:mu}
We will show that the formal tangent space to $\Defor_U(C)$ can be identified
with the space of $d_\mathfrak{F}$-closed foliated one-forms on $C$.
To this end, we rewrite the condition for a section $s$ of $U$ to be coisotropic in a more algebraic way.
First, every section $s\in \Gamma(U)$ yields a diffeomorphism
$$ \psi_{-s}: E\to E, \quad (x,e) \mapsto (x,e-s_x),$$
which maps  $\mathrm{graph}(s)$ to the zero section $C\subset E$.

The graph of $s$ is coisotropic with respect to $\Omega$ if and only
if the zero section is coisotropic with respect to $(\psi_{-s})_*\Pi$, where $\Pi$
denotes the Poisson bivector field corresponding to $\Omega$. As discussed in
Section \ref{section: pre-symplectic geometry}, the latter statement can be expressed
by saying that $(\psi_{-s})_*\Pi$ lies in the kernel of the projection map
\begin{equation}\label{eq:Pproj}
P: \mathcal{X}^\bullet(E) \to \Gamma(\wedge E),
\end{equation}
given by restriction to $C$, composed with the projection $\wedge TE\vert_C \to \wedge E$.

Hence, if we define $\mu$ to be the map
$$ \mu: \Gamma(U) \to \Gamma(\wedge^2E), \quad s\mapsto P((\psi_{-s})_*\Pi),$$
a section $s$ will be coisotropic if and only if it is mapped to zero under $\mu$.

The map $\mu$ seems non-local since it involves the symplectic
form away from $C$. However, the symplectic
structure $\Omega$ of the local symplectic model $(U,\Omega)$ is determined by
$\omega_C$.
We will return to this point in Subsection \ref{subsection: coisotropic submanifolds - L_infty}, where we see that 
the equation $\mu(-s)=0$ can in fact
be recovered as the Maurer-Cartan equation of an $L_\infty$-algebra whose structure
maps are multi-differential operators on $C$.
\end{remark}

\begin{proposition}\label{prop:EK*}
 Let $s_t$ be a smooth one-parameter family of sections of $U$
which starts at the zero section $s_0=0$.
 Then
 $$ \frac{\partial}{\partial t}\vert_{t=0} \mu(s_t) = - d_\mathfrak{F}(\frac{\partial}{\partial t}\vert_{t=0} s_t)$$
under the identification $E\cong K^*$.
\end{proposition}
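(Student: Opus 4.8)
The plan is to compute the linearization of $\mu$ at the zero section directly, using the definition $\mu(s) = P((\psi_{-s})_*\Pi)$ together with the characterization of $d_\mathfrak{F}$ as $\xi \mapsto P([\Pi,\xi])$ from Remark \ref{rem:dP}. The key observation is that differentiating the pushforward $(\psi_{-s_t})_*\Pi$ in $t$ at $t=0$ produces a Lie-derivative term.

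\medskip

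\emph{Step 1: Reduce to a Lie derivative.} Write $X_t$ for the time-dependent vector field generating the flow $\psi_{-s_t}$. Since $\psi_0 = \mathrm{id}$ and each $\psi_{-s_t}$ is fibrewise translation by $-s_t$, differentiating $(\psi_{-s_t})_*\Pi$ at $t=0$ yields a Lie derivative
\begin{equation*}
\frac{\partial}{\partial t}\Big\vert_{t=0}(\psi_{-s_t})_*\Pi = \mathcal{L}_{V}\,\Pi = [V,\Pi],
\end{equation*}
where $V = -\frac{\partial}{\partial t}\vert_{t=0} s_t$ is the vertical vector field on $E$ obtained from the infinitesimal variation of the sections, viewed as constant along the fibres. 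Here I use that the infinitesimal generator of the translation flow at $t=0$ is exactly (minus) the derivative of $s_t$, regarded as a fibrewise-constant vertical vector field, and that the Lie derivative of a multivector field along a vector field equals the Schouten bracket with that vector field.

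\medskip

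\emph{Step 2: Apply $P$ and match with $d_\mathfrak{F}$.} Since $P$ is linear, I can interchange it with $\frac{\partial}{\partial t}\vert_{t=0}$ to get
\begin{equation*}
\frac{\partial}{\partial t}\Big\vert_{t=0}\mu(s_t) = P\big([V,\Pi]\big) = -\,P\big([\Pi,\xi]\big),
\end{equation*}
where $\xi = \frac{\partial}{\partial t}\vert_{t=0}s_t \in \Gamma(E)$ is the infinitesimal deformation (so $V = -\xi$ as a fibrewise-constant vertical field), and the sign flip uses the graded antisymmetry of the Schouten bracket on the relevant degrees. By Remark \ref{rem:dP}, the operator $\xi \mapsto P([\Pi,\xi])$ is precisely $d_\mathfrak{F}$ under the identification $E \cong K^*$, which gives the claimed formula $\frac{\partial}{\partial t}\vert_{t=0}\mu(s_t) = -d_\mathfrak{F}(\frac{\partial}{\partial t}\vert_{t=0}s_t)$.

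\medskip

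I expect the main obstacle to be Step 1: justifying cleanly that the derivative of the pushforward equals the Schouten bracket $[V,\Pi]$, with $V$ correctly identified as the fibrewise-constant vertical field associated to $-\dot s_0$. One must be careful that $\psi_{-s_t}$ is a genuine family of diffeomorphisms only on the fibres (translation), and that the relevant infinitesimal generator lives in $\Gamma(E)$ interpreted as in Remark \ref{rem:dP}; tracking the two sign conventions (from the graded Schouten bracket and from the direction of the translation) so that they combine into the single overall minus sign is the delicate bookkeeping. Everything else is the linearity of $P$ and a direct appeal to the identification of $d_\mathfrak{F}$ already recorded in Remark \ref{rem:dP}.
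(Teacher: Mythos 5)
Your strategy is exactly the paper's: read off the generator of the family $\psi_{-s_t}$, convert the $t$-derivative of the pushforward into a Lie derivative, push through the linear projection $P$, and quote Remark \ref{rem:dP}. The final formula is right, but both of your displayed intermediate identities carry a sign error, and only their mutual cancellation rescues the conclusion. In Step 1, for a family of diffeomorphisms $\phi_t$ with $\phi_0=\mathrm{id}$ generated by $V_t$, the derivative of the \emph{pushforward} is $\frac{\partial}{\partial t}\vert_{t=0}(\phi_t)_*T=-\mathcal{L}_{V_0}T$, since the pushforward is the pullback along the inverse family, whose generator is $-V_t$. You correctly identify the generator of $\psi_{-s_t}$ at $t=0$ as the fibrewise-constant vertical field $V=-\frac{\partial}{\partial t}\vert_{t=0}s_t$, but then the correct statement is $\frac{\partial}{\partial t}\vert_{t=0}(\psi_{-s_t})_*\Pi=-\mathcal{L}_V\Pi=+\mathcal{L}_{\dot{s}_0}\Pi$, opposite in sign to your $\mathcal{L}_V\Pi$.

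In Step 2 you assert $P([V,\Pi])=-P([\Pi,\xi])$ with $V=-\xi$. In the convention the paper uses (visible in its own proof of this proposition), $\mathcal{L}_\xi\Pi=[\xi,\Pi]=-[\Pi,\xi]$ for a vector field $\xi$ and the bivector $\Pi$: the graded antisymmetry sign for this degree pair is $-1$, and it cancels against the sign in $V=-\xi$, so in fact $[V,\Pi]=[-\xi,\Pi]=+[\Pi,\xi]$ — again opposite to what you wrote. The two slips compensate, which is why you land on the correct answer; but each displayed identity is false as stated, and since you yourself flagged the sign bookkeeping as the delicate point, this is precisely the part of the write-up to repair. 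Corrected, the computation is the paper's verbatim: $\frac{\partial}{\partial t}\vert_{t=0}\mu(s_t)=P\bigl(\mathcal{L}_{\dot{s}_0}\Pi\bigr)=-P\bigl([\Pi,\dot{s}_0]\bigr)=-d_\mathfrak{F}\bigl(\frac{\partial}{\partial t}\vert_{t=0}s_t\bigr)$. Everything else — the identification of the generator, the linearity of $P$, and the appeal to Remark \ref{rem:dP} — matches the paper's proof.
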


\begin{proof}
Consider the one-parameter family of diffeomorphisms $\psi_{-s_t}\colon E\to E$.
The corresponding time-dependent vector field is $Y_t:=-\frac{\partial}{\partial t} s_t$, a vertical vector field which is constant on each fibre of $E$. Using this and
the definition of $\mu$, we see that $\frac{\partial}{\partial t}\vert_{t=0}\mu(s_t)$
equals the image under the projection
 $P: \chi^\bullet(E) \to \Gamma(\wedge E)$ of
$\mathcal{L}_{\frac{\partial}{\partial t}\vert_{t=0}s_t}\Pi=-[\Pi,\frac{\partial}{\partial t}\vert_{t=0}s_t]$.
By Remark \ref{rem:dP} this is exactly the formula for the image of $\frac{\partial}{\partial t}\vert_{t=0}s_t$ under $\-d_\mathfrak{F}$,
if we apply the identification $E\cong K^*$.
\end{proof}

\begin{corollary}
Let $s_t$ be a smooth one-parameter family of coisotropic sections of $E$ with
$s_0=0$.
Then $\frac{\partial}{\partial t}\vert_{t=0}s_t$ is closed with respect to $d_\mathfrak{F}$.
\end{corollary}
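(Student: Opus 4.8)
The plan is to read the corollary straight off Proposition \ref{prop:EK*}, using the algebraic characterization of coisotropicity recorded in Remark \ref{rem:mu}. Recall from that remark that a section $s$ of $U$ has coisotropic graph if and only if $\mu(s)=0$. Since by hypothesis every member $s_t$ of the family is a coisotropic section, the curve $t\mapsto \mu(s_t)$ is identically zero in $\Gamma(\wedge^2 E)$.

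First I would differentiate this identity at $t=0$. As $\mu(s_t)$ vanishes for all $t$, its derivative at the origin vanishes too, so $\frac{\partial}{\partial t}\vert_{t=0}\mu(s_t)=0$. The family starts at the zero section, $s_0=0$, which is precisely the hypothesis under which Proposition \ref{prop:EK*} applies; I may therefore invoke the identity $\frac{\partial}{\partial t}\vert_{t=0}\mu(s_t)=-d_\mathfrak{F}(\frac{\partial}{\partial t}\vert_{t=0}s_t)$, valid under the identification $E\cong K^*$.

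Combining the two equalities yields $d_\mathfrak{F}(\frac{\partial}{\partial t}\vert_{t=0}s_t)=0$; that is, the infinitesimal deformation $\frac{\partial}{\partial t}\vert_{t=0}s_t$, viewed as a foliated one-form via $E\cong K^*$, is $d_\mathfrak{F}$-closed, which is exactly the assertion.

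The only point asking for a moment's care -- rather than a genuine obstacle -- is the bookkeeping of spaces: since $s_0=0$, the derivative $\frac{\partial}{\partial t}\vert_{t=0}s_t$ is a vertical vector field along the zero section and hence a bona fide section of $E$, so the isomorphism $E\cong K^*$ of Remark \ref{rem:isom} turns it into an element of $\Gamma(K^*)$, on which $d_\mathfrak{F}$ acts. No analytic or convergence input is required, as all the geometric content has already been packaged into Proposition \ref{prop:EK*}.
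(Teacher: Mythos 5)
Your argument is correct and is exactly the paper's proof, merely spelled out in more detail: the paper likewise observes that $\mu(s_t)=0$ for all $t$ by Remark \ref{rem:mu}, differentiates at $t=0$, and invokes Proposition \ref{prop:EK*}. Nothing to add.
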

\begin{proof}
We have $\mu(s_t)=0$ for all $t$ by Remark \ref{rem:mu}, hence the statement follows from Proposition \ref{prop:EK*}.
\end{proof}

\begin{remark}\label{rem:Omega1cl}
Proposition \ref{prop:EK*} identifies the space of closed elements of $\Omega_\mathfrak{F}^1(C)$
with the formal tangent space to $\Defor_U(C)$ at $C$, where the formal tangent space is defined as the space
of solutions to the linearized equation. 
We point out that it is known  that not all cohomology classes
of $H^1_{\mathfrak{F}}(C)$ can be realized through one-parameter families of deformations,
see \cite{coisoemb,OP}. 
\end{remark}

\subsection{Oh and Park's $L_\infty[1]$-algebra} \label{subsection: coisotropic submanifolds - L_infty}

We recall the $L_\infty[1]$-algebra associated to $C$ \cite{OP,CaFeCo2}.\footnote{The reader is referred to  \cite[Appendix D]{LOTVcoiso} for a proof that the
construction from \cite{OP} coincides with the one from \cite{CaFeCo2}, specialized to the symplectic case.}

\begin{definition}  An {\bf $L_\infty[1]$-algebra} is a $\ZZ$-graded vector space $W$, 
equipped with a collection of graded symmetric brackets $(\lambda_k\colon W^{\otimes k} \longrightarrow W)_{k\ge1}$ of degree $1$ which satisfy a collection of quadratic relations \cite{LadaStasheff}, called higher Jacobi identities.

The {\bf Maurer-Cartan series} of a degree zero element $\beta\in W$ is the infinite sum
$$ \mathsf{MC}(\beta) := \sum_{k\ge 1} \frac{1}{k!}\lambda_k(\beta^{\otimes k}).$$ We say that $\beta$ is a  Maurer-Cartan element if its Maurer-Cartan series converges to zero\footnote{...with respect to a suitable topology. For the specific examples of $L_\infty[1]$-algebras with which we will be concerned later on, we will make this precise.}.
We denote the set of all Maurer-Cartan elements of $W$ by $\mathsf{MC}(W)$.
\end{definition}

\begin{remark}
 In order to describe the $L_\infty[1]$-algebra associated to the coisotropic submanifold $C$ of $(U,\Omega)$  as explicitly as possible, we consider the Poisson structure $\Pi$ associated to $\Omega$.
 As explained in Section \ref{section: pre-symplectic geometry}, the  coisotropicity of $C$ is equivalent to $P(\Pi)=0$, where
  $$P: \chi^{\bullet}(E)\to \Gamma(\wedge E)$$ is as in Equation \eqref{eq:Pproj}.

  As shown in \cite{OP} and \cite{CaFeCo2}, the space $\Gamma(\wedge E)[1]$ is equipped with a canonical $L_\infty[1]$-algebra structure.
We denote the structure maps of this $L_\infty[1]$-algebra by
$$ \lambda_k\colon \Gamma(\wedge E)[1]^{\otimes k} \to \Gamma(\wedge E)[1].$$
The evaluation of $\lambda_k$ on $s\otimes\cdots\otimes s$ for $s \in \Gamma(E)$ yields
 \begin{equation}\label{eq:multibrackets}
 \lambda_k(s,\dots,s) := P\big([[\dots[\Pi,s],s]\dots],s] \big),
\end{equation}
where $s$ is interpreted as a fibrewise constant vertical vector-field on $E$. Hence the Maurer-Cartan series of $s$ reads
$\mathsf{MC}(s)=P(e^{[\cdot,s]}\Pi)$.
 
The following result, which is -- partly in an implicit manner -- contained in \cite{OP}, is essentially \cite[Thm. 2.8]{OPPois}. It relies on the fact that the Poisson bivector field associated to $\Omega$ is analytic in the fibre direction, which is true
thanks to \cite[Cor. 2.7]{OPPois}. {In
 \cite{OPPois}, such bivector fields are called fibrewise entire and most of the subsequent discussion carries over to such Poisson bivector fields.
We refer the interested reader to
Appendix \ref{appendix: fibrewise entire} for more details.}
\end{remark}

\begin{theorem}\label{thm:symplectic_case}
Consider the $L_\infty[1]$-algebra  
$\Gamma(\wedge E)[1]$ associated to the coisotropic submanifold $C$.
For any $s \in \Gamma(E)$ such that $\graph(s)$ is contained in $(U,\Omega)$, 
the Maurer-Cartan series $\mathsf{MC}(-s)$ is pointwise convergent.
Furthermore, for any such $s$ the following two statements are equivalent:
\begin{enumerate}
 \item $\graph(s)$ is a coisotropic submanifold of $(U,\Omega)$.
 \item The Maurer-Cartan series $\mathsf{MC}(-s)$ converges to zero (in the sense of pointwise convergence).
\end{enumerate}
\end{theorem}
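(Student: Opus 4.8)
The plan is to reduce both assertions to a single identity, namely that wherever it converges the Maurer--Cartan series $\mathsf{MC}(-s)$ equals the bivector
$$\mu(s) = P\big((\psi_{-s})_*\Pi\big)$$
of Remark \ref{rem:mu}. Granting this, the equivalence of (1) and (2) is immediate: by Remark \ref{rem:mu} the graph of $s$ is coisotropic in $(U,\Omega)$ if and only if $\mu(s)=0$, so $\graph(s)$ is coisotropic exactly when $\mathsf{MC}(-s)$ converges to zero. It therefore remains to prove the identity together with the pointwise convergence, and these are what I would attack directly.

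For the algebraic bookkeeping I would first rewrite the multibrackets \eqref{eq:multibrackets} in terms of Lie derivatives. Since $s$ is a fibrewise constant vertical vector field and, with the standard Schouten conventions, $[\Pi,u]=-\mathcal{L}_u\Pi$ for any vector field $u$, an easy induction gives that the $k$-fold bracket $[[\cdots[\Pi,u]\cdots],u]$ equals $(-1)^k\mathcal{L}_u^k\Pi$. Setting $u=-s$ produces $\lambda_k(-s,\dots,-s)=P(\mathcal{L}_s^k\Pi)$, whence
$$\mathsf{MC}(-s)=\sum_{k\ge1}\frac{1}{k!}P(\mathcal{L}_s^k\Pi)=P\Big(\sum_{k\ge0}\frac{1}{k!}\mathcal{L}_s^k\Pi\Big),$$
the $k=0$ term being harmless because $P(\Pi)=0$ by coisotropicity of the zero section. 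On the geometric side, $\psi_{-s}$ is the time-$(-1)$ flow of the same field $s$, and the pushforward of a multivector field along the flow of $s$ is the operator exponential of $-\mathcal{L}_s$; at time $-1$ this reads $(\psi_{-s})_*\Pi=\sum_{k\ge0}\frac{1}{k!}\mathcal{L}_s^k\Pi$. Comparing the two displays shows $\mathsf{MC}(-s)=\mu(s)$ as formal series.

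The heart of the matter -- and the step I expect to be the main obstacle -- is to upgrade this formal identity to genuine pointwise convergence, i.e. to show that the series $\sum_k\frac{1}{k!}P(\mathcal{L}_s^k\Pi)$ really sums to the smooth bivector $P((\psi_{-s})_*\Pi)$. In a local trivialization of $E$ with base coordinates $x$ and fibre coordinates $y$, the flow of $s$ is the fibre translation $(x,y)\mapsto(x,y+ts_x)$, and the partial sums of the series are precisely the fibre-direction Taylor polynomials of $\Pi$ based at $s_x$, restricted to the zero section and projected by $P$. By \cite[Cor.~2.7]{OPPois} the components of $\Pi$ are analytic in the fibre variables (indeed $\Pi$ is fibrewise entire), so for every $x\in C$ with $\graph(s)\subset U$ this Taylor series converges to the value of $\Pi$ at $s_x$. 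The $x$-dependence of $s$ contributes only shear terms to the differential of the flow, which are polynomial in $t$ and do not disturb analyticity in $y$, so they are absorbed by the same estimate. This yields the pointwise convergence claimed in the first part of the statement and identifies the limit with $\mu(s)$; combined with Remark \ref{rem:mu} as above, it gives the equivalence of (1) and (2). The overall structure is that of \cite[Thm.~2.8]{OPPois}.
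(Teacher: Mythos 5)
Your proposal is correct and takes essentially the same route as the paper, which does not reprove the theorem but defers to \cite[Thm.~2.8]{OPPois}: the key identity $\mathsf{MC}(-s)=P\big((\psi_{-s})_*\Pi\big)=\mu(s)$ that you establish is precisely \cite[Prop.~1.15]{OPPois} (the same fact the paper invokes later, in the vector field version, when computing the gauge-action), with convergence coming from the fibrewise analyticity of $\Pi$ guaranteed by \cite[Cor.~2.7]{OPPois}. Your reduction of the equivalence of (1) and (2) to Remark \ref{rem:mu}, and your handling of the shear terms arising from the $x$-dependence of $s$, match the intended argument.
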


\begin{remark}\label{rem:sminuss}
In other words, if we restrict attention to those sections whose graphs lie inside $U$,  { the map $s\mapsto -s$} restricts to a bijection between
the set of coisotropic sections
$$ \Defor_{U}(C) := \{ s \in \Gamma(U): \textrm{the graph of } s \textrm{ is coisotropic inside } (U,\Omega)\}$$
from Subsection \ref{subsection: coisotropic submanifolds - deformation problem},
and  
$$\mathsf{MC}_U(\Gamma(\wedge E)[1]):=\{\textrm{Maurer-Cartan elements  
of   $\Gamma(\wedge E)[1]$ whose graphs lie in $U$}\}.$$
  
Notice that the first structure map $\lambda_1$ of the $L_\infty[1]$-algebra  
$\Gamma(\wedge E)[1]$ coincides with the foliated de Rham differential
$d_\mathfrak{F}$ under the isomorphism $\Gamma(\wedge E) \cong \Omega_\mathfrak{F}(C)$.
We could -- a posteriori -- use this fact to recover the infinitesimal description of $\Defor_U(C)$ which we obtained in
Subsection \ref{subsection: coisotropic submanifolds - infinitesimal deformations}.
\end{remark}

\section{Hamiltonian diffeomorphisms}\label{section: Ham diffeos}

In this section we investigate the action of Hamiltonian diffeomorphisms on the space of coisotropic submanifolds. 
More precisely, we provide
a description of the induced equivalence relation on the space of coisotropic sections.
As the main result, we show that for compact coisotropic submanifolds this equivalence relation coincides 
with the gauge-equivalence in Oh and Park's
$L_\infty[1]$-algebra. This result was obtained independently by
L{\^e}, Oh, Tortorella and  Vitagliano
 in \cite[Corollary 4.24]{LOTVcoiso}.

\subsection{The deformation problem}\label{subsection: Ham diffeos - deformation problem}

Recall that by Definition \ref{def:coisoDefor} a section $s$ of {$\pi: U \to C$} is called coisotropic if $\graph(s)$  is a coisotropic submanifold
of $(U,\Omega)$, and that we denote the set of all such sections by $\Defor_U(C)$.

\begin{definition}\label{def:hamsymeq}
Two coisotropic sections $s_0$ and $s_1$ are called {\bf Hamiltonian equivalent} if there is 
 a family of coisotropic sections $s_t$, agreeing with the given
ones at $t=0$ and $t=1$, and an isotopy of Hamiltonian diffeomorphisms $\phi_t$
 such that $\phi_t$ maps the graph of $s_0$ to the graph of $s_t$ for all $t\in [0,1]$.
\end{definition}

\begin{remark}
To be more precise, we assume that we are given a locally defined Hamiltonian isotopy, i.e.
a family of diffeomorphisms between open subsets of $U$, generated by a family
of locally defined Hamiltonian vector fields, 
which maps $\graph(s_0)$
onto $\graph(s_t)$. 
\end{remark}

It is straight-forward to check that Hamiltonian equivalence actually defines an
equivalence relations on the set $\Defor_E(C)$, which we denote by $\sim_\Ham$.
We refer the interested reader to \cite[Lemma 1]{FlorianModuli} for a proof of this fact.
It is natural to wonder about the equivalence classes {of} $\sim_\Ham$,
so we define:

\begin{definition}\label{definition: ham equivalence}
The {\bf Hamiltonian moduli space of coisotropic sections} is the set
 $$\Moduli^\Ham_U(C):= \Defor_U(C) / \sim_\Ham.$$
\end{definition}
We ask:

\begin{center}
 {\em How can one describe the set $\Moduli^\Ham_U(C)$?}
\end{center}

Theorem \ref{theorem: Hamiltonian equivalence} of Subsection \ref{subsection: Ham diffeos - equivalences match}
provides an answer in terms of the $L_\infty[1]$-algebra of Oh and Park.

\subsection{Infinitesimal moduli}\label{subsection: Ham diffeos - infinitesimal moduli}

We discuss the infinitesimal version of $\Moduli^\Ham_U(C)$.
In particular, we argue that the formal tangent space to $\Moduli^{\Ham}_U(C)$
at the equivalence class of the zero-section $C$
is given by the first foliated cohomology $H^1_\mathfrak{F}(C)$, with $\mathfrak{F}$
the characteristic foliation of the pre-symplectic structure on $C$.
The results of this subsection can be recovered -- via specialization to the symplectic case -- from the results obtained by L\^e and Oh, \cite[Subsection 6.3]{OhLe}, who studied deformations
of coisotropic submanifolds in locally conformal symplectic manifolds.

\begin{remark}
Let $(s_t)_{t\in [0,1]}$ be a family of coisotropic sections that starts at the zero-section.
In Subsection \ref{subsection: coisotropic submanifolds - infinitesimal deformations} we saw
that $\frac{\partial s_t}{\partial t}\vert_{t=0} \in \Gamma(E)$ lies in the kernel of the complex
$ (\Gamma(\wedge E), P([\Pi,-]))$ and that the latter is isomorphic to the foliated 
de Rham complex $(\Omega_\mathfrak{F}(C),d_\mathfrak{F})$.
\end{remark}

\begin{proposition}\label{prop:trivialclass}
Suppose that $(s_t)_{t\in [0,1]}$ is a family of coisotropic sections that starts at the zero-section
and is trivial under Hamiltonian equivalence, i.e. there is an Hamiltonian isotopy $\phi_t$ such that
the graph of $s_t$ coincides with the image of the zero section under $\phi_t$.

Then the cohomology class of $\frac{\partial s_t}{\partial t}\vert_{t=0}$ in $H^1_\mathfrak{F}(C)$ is trivial.
\end{proposition}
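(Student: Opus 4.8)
The plan is to show that $\frac{\partial s_t}{\partial t}\vert_{t=0}$ is not merely $d_\mathfrak{F}$-closed (which we already know from the corollary following Proposition \ref{prop:EK*}) but in fact $d_\mathfrak{F}$-exact, by producing an explicit primitive coming from the generating Hamiltonian. The key geometric input is that the isotopy $\phi_t$ is Hamiltonian, so it is generated by a time-dependent Hamiltonian vector field $X_t = \Pi^\sharp(dh_t)$ for some (locally defined) family of functions $h_t$ on $U$. First I would differentiate the defining relation $\graph(s_t) = \phi_t(C)$ at $t=0$. Since $\phi_0 = \mathrm{id}$ and the graph of $s_0 = 0$ is the zero section $C$, the variation $\frac{\partial s_t}{\partial t}\vert_{t=0}$, viewed as a section of $E \cong TU\vert_C / TC$, is precisely the class of the generating vector field $X_0$ modulo $TC$. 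Thus the problem reduces to identifying the normal component of the Hamiltonian vector field $X_0$ along $C$.

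The next step is to compute that normal component explicitly using the isomorphism $E \cong K^*$ from Remark \ref{rem:isom}, which is induced by $\omega^\sharp$ (equivalently, by $\Pi^\sharp$). Under this identification, the image of $X_0 = \Pi^\sharp(dh_0)$ in $TU\vert_C/TC \cong K^*$ should correspond exactly to the restriction of $dh_0$ to $K = TC^\perp$, i.e. to the pullback of $dh_0$ along the inclusion $K \hookrightarrow TC$. Denote by $f := h_0\vert_C$ the restriction of the initial Hamiltonian to $C$. Then I expect $\frac{\partial s_t}{\partial t}\vert_{t=0}$ to equal $d_\mathfrak{F} f$, where $d_\mathfrak{F} f$ is the foliated differential of the function $f \in \Omega^0_\mathfrak{F}(C) = C^\infty(C)$; concretely $d_\mathfrak{F} f$ is the restriction of $df$ to the characteristic distribution $K$. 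This is exactly the foliated analogue of the Lagrangian fact that a Hamiltonian isotopy moves the zero section by $\alpha \mapsto \alpha + df$. Once this identity is established, the cohomology class in $H^1_\mathfrak{F}(C)$ is trivial by definition, completing the proof.

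The main obstacle will be the careful bookkeeping in the normal-component computation: one must verify that only the value $h_0\vert_C$ (and not the transverse derivatives of $h_0$) survives when projecting $\Pi^\sharp(dh_0)$ to the normal bundle and then translating through $\omega^\sharp$ to land in $K^*$. The subtlety is that $\Pi^\sharp(dh_0)$ has both a component tangent to $C$ and a transverse component, and I must confirm that the projection $P$ (restriction to $C$ followed by the quotient $TU\vert_C \to E$) picks out precisely the foliated differential of the restricted function. Here I would use that the sharp map $\omega^\sharp$ sends $TC$ into the annihilator structure compatibly with the surjection $TU\vert_C \to (TC^\perp)^* = K^*$ described in Remark \ref{rem:isom}, so that the transverse part of $dh_0$ is annihilated and only $dh_0\vert_K$ remains. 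A secondary technical point is handling the time-dependence: strictly one works with the generator at $t=0$, and since we only need the first-order variation, evaluating the Hamiltonian data at $t=0$ suffices and no convergence or higher-order analysis is required.
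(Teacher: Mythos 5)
Your proposal is correct and follows essentially the same route as the paper's own proof: the paper likewise identifies $\frac{\partial s_t}{\partial t}\vert_{t=0}$ with $P(X_{H_0})=P([\Pi,H_0])$ (via Lemma \ref{lem:techeasy}), uses coisotropicity to replace $H_0$ by $H_0\vert_C$ --- your dual formulation, that after applying $\omega^{\sharp}$ and restricting to $K\subset TC$ only $h_0\vert_C$ survives while the conormal part is annihilated, is exactly this step since $\Pi^{\sharp}$ maps $(TC)^{\circ}$ into $TC\subset\ker P$ --- and concludes that the class is that of $d_\mathfrak{F}(H_0\vert_C)$, hence trivial. The only blemish is a harmless sign: with the paper's convention $\Pi^{\sharp}=-(\omega^{\sharp})^{-1}$, the identification $E\cong K^*$ sends $P(X_{h_0})$ to $-d_\mathfrak{F}(h_0\vert_C)$ rather than $+d_\mathfrak{F}(h_0\vert_C)$, which does not affect exactness.
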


\begin{proof}
Suppose that $\phi_t$ is generated by the family of Hamiltonian vector fields $X_{H_t}$.
We can write $\frac{\partial s_t}{\partial t}\vert_{t=0}$
as $ P(X_{H_0}) = P([\Pi,H_0])$ (see Lemma \ref{lem:techeasy} later on).
We observe that the latter expression equals $P([\Pi,H_0\vert_C])$,
because $\Pi^{\sharp}\vert_C$ maps the co-normal bundle to the tangent bundle $TC$, whose
sections lie in the kernel of $P$.
As a consequence, the cohomology class of $\frac{\partial s_t}{\partial t}\vert_{t=0}$
equals the cohomology class of $P([\Pi,H_0\vert_C])$, which is trivial.
Now apply the isomorphism between $\Gamma(\wedge E)$ and the foliated de Rham complex from Remark \ref{rem:mu}.
\end{proof}

\begin{remark}\label{TMHam}

For every $f\in \cC^{\infty}(C)$, let $\phi_t$ be the flow of the Hamiltonian vector field $X_{\pi^*f}$, and   $(s_t)_{t\in [0,\epsilon)}$ the family of coisotropic sections determined by  $\mathrm{graph}(s_t)=\phi_t(C)$.
Then the proof of Proposition \ref{prop:trivialclass} shows that
$\frac{\partial s_t}{\partial t}\vert_{t=0}$ corresponds to $d_{\mathfrak{F}}f$ under the isomorphism $\Gamma(E)\cong \Omega^1_{\mathfrak{F}}(C)$.
Hence we can refine Proposition \ref{prop:trivialclass} as follows: the formal tangent space of the set of coisotropic sections which are   trivial under Hamiltonian equivalence is precisely $\Omega^1_{\mathfrak{F}, \mathrm{exact}}(C)$.

This and 
Remark \ref{rem:Omega1cl} imply that the formal tangent space at  zero  to
$\Moduli^\Ham_U(C)$ is   $H^1_\mathfrak{F}(C)$. 
In the special case of $C$ Lagrangian, this reduces to the first de Rham cohomology $H^1(C)$ of $C$, as expected.
\end{remark}

\subsection{Gauge-equivalence}\label{subsection: Ham diffeos - equivalences}

\begin{remark}
Convergence issues aside, every $L_\infty[1]$-algebra $W$ comes along with a (singular) foliation on its   set of Maurer-Cartan elements $\mathsf{MC}(W)$.
On  $W_0$, the elements  of degree $0$, there is a distribution  generated by vector fields $V_\gamma$  associated to elements $\gamma$ of degree $-1$. At the point $\beta \in W_0$, the vector field $V_\gamma$ reads
$$ \lambda_1(\gamma) + \lambda_2(\gamma,\beta) + \frac{1}{2!}\lambda_3(\gamma,\beta,\beta) + \frac{1}{3!}\lambda_4(\gamma,\beta,\beta,\beta) +\cdots.$$
The vector fields $V_\gamma$ are tangent to
$\mathsf{MC}(W)$  and {they form an involutive distribution there}, hence we obtain a canonical equivalence relations on $\mathsf{MC}(W)$:

\end{remark}

 \begin{definition}\label{def:eqMC}
 Two Maurer-Cartan elements $\beta_0$ and $\beta_1$ of an $L_\infty[1]$-algebra $W$ are {\bf gauge-equivalent} if there is a one-parameter family $\gamma_{t}$ of degree $-1$ elements of $W$ and a  one-parameter family $\beta_{t}$ of degree zero elements of $W$, agreeing with the given ones at $t=0$ 
  and $t=1$, such that
\begin{equation*}
\frac{\partial}{\partial t} \beta_{t}=\lambda_1(\gamma_{t})+ \lambda_2(\gamma_{t},\beta_{t} )+\frac{1}{2!}\lambda_3(\gamma_t,\beta_{t},\beta_{t})+\frac{1}{3!}\lambda_4(\gamma_t,\beta_{t},\beta_{t},\beta_{t})+\dots
\end{equation*}
We presuppose that $W$ is equipped with a suitable topology and that the right-hand side of the above
equation converges.
\end{definition}

We apply this to  the $L_\infty[1]$-algebra
structure on $\Gamma(\wedge E)[1]$ from Subsection \ref{subsection: coisotropic submanifolds - L_infty}. We are interested in $\mathsf{MC}_U(\Gamma(\wedge E)[1])$, the
Maurer-Cartan elements  of   $\Gamma(\wedge E)[1]$ whose graphs lie in $U$
(see Remark \ref{rem:sminuss}). We define an equivalence relation on $\mathsf{MC}_U(\Gamma(\wedge E)[1])$ as in Def.
\ref{def:eqMC}, but additionally requiring that the one-parameter family of degree zero elements $\beta_{t}$   
consists of sections of $U$ (rather than $E$).
We use the bijection $\Defor_{U}(C)\cong \mathsf{MC}_U(\Gamma(\wedge E)[1]), s\mapsto -s$ 
described in Remark \ref{rem:sminuss} to transport the above equivalence relation to $\Defor_{U}(C)$:

\begin{definition}\label{def:gaugeGammaU}
Two coisotropic sections $s_0$ and $s_1$ are called {\bf gauge-equivalent},  $s_0 \sim_{\gauge}  s_1$,  if $-s_0$ and $-s_1$ are equivalent  elements (in the sense above) of $\mathsf{MC}_U(\Gamma(\wedge E)[1])$.\end{definition}

\begin{remark}
We make the equivalence relation $\sim_{\gauge}$ more explicit.
Two elements $s_0$ and $s_1$ in   $\Defor_U(C)$ are declared gauge-equivalent
if there is a smooth one-parameter family $s_t$ in $\Gamma(U)$,
coinciding with $s_0$ and $s_1$ at the endpoints, 
 such that
\begin{eqnarray*}\label{eq:ddtP}
\frac{\partial}{\partial t}(-s_t) &=& P([\Pi,\pi^*f_t]) + P([[\Pi,\pi^*f_t],-s_t]) + \frac{1}{2!}P([[[\Pi,\pi^*f_t],-s_t],-s_t]) + \cdots\\
&=& P(e^{[\cdot,-s_t]}X_{\pi^*f_t}).\nonumber
\end{eqnarray*}
Here $-s_t$ is interpreted as a family of fibrewise constant vertical vector field and
$f_t$ is a one-parameter family of smooth functions on $C$.
Observe that the latter can be seen as a one-parameter family of degree $-1$ elements of
the $L_\infty[1]$-algebra $\Gamma(\wedge E)[1]$.
To rewrite the condition in more geometric terms, recall that for $s\in \Gamma(E)$, $\psi_{s}$ is the diffeomorphism
of $E$ that consists of fibrewise addition with $s$. Moreover, let $\pr_s$
be the projection of $TE \vert_{\graph(s)}$ onto the vertical part of $TE$ along $T\graph(s)$.

We now compute
 \begin{align*}\label{eq:PeprV}
P( e^{[\cdot,-s_t]}X_{\pi^*f_t})&=P\big((\psi_{-s_t})_*X_{\pi^*f_{t}}\big)
=\pr_{0}\big((\psi_{-s_t})_*(X_{\pi^*f_t}\vert_{\graph(s_t)})\big)\\
&=(\psi_{-s_t})_*\big(\pr_{s_t}(X_{\pi^*f_t}\vert_{\graph(s_t)})\big)
=\pr_{s_t}(X_{\pi^*f_t}\vert_{\graph(s_t)}).
\end{align*}
We use \cite[Prop. 1.15]{OPPois} in the first equality\footnote{\cite[Prop. 1.15]{OPPois}
is stated for bivector fields, but it carries over immediately to the case of vector fields.}, which applies since the vector field $X_{\pi^*f_{t}}$ is fibrewise entire in the terminology of \cite{OPPois}. In the last equality we used the fact that $\psi_{-s_t}$ maps $\graph(s_t)$ to the zero section $C$  and preserves the fibres of the projection $\pi: U \to C$.
\end{remark}

After reversing the signs in front of  $f_t$, this shows:

\begin{proposition}\label{algequiv}
Elements $s_0$ and $s_1$ of $\Defor_U(C)$ are gauge-equivalent if and only if
 there is
  a one-parameter family $s_t\in \Gamma(U)$, agreeing with $s_0$ and $s_1$ at the endpoints, 
  and a one-parameter family $f_t\in \mathcal{C}^{\infty}(C)$ such that
\begin{equation}\label{eq:prv}
\frac{\partial}{\partial t}s_t=\pr_{s_t}(X_{\pi^*f_t}\vert_{\graph(s_t)})
\end{equation}
holds for all $t\in [0,1]$.
\end{proposition}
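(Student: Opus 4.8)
The plan is to unwind the definition of gauge-equivalence from Definition~\ref{def:gaugeGammaU} and show that it is equivalent to the geometric condition~\eqref{eq:prv}. By Definition~\ref{def:gaugeGammaU}, $s_0\sim_\gauge s_1$ precisely when $-s_0$ and $-s_1$ are connected by a one-parameter family $-s_t$ of Maurer-Cartan elements (with $s_t\in\Gamma(U)$) satisfying the gauge-equation of Definition~\ref{def:eqMC}, where the degree $-1$ elements $\gamma_t$ are taken to be functions $f_t\in\cC^\infty(C)$. The key observation, already recorded in the $L_\infty[1]$-algebra of Oh and Park, is that the action of a degree $-1$ element $f$ via the structure maps $\lambda_k$ is computed by the iterated Schouten bracket with $\Pi$ and the nested brackets with the current Maurer-Cartan element; concretely, the right-hand side of the gauge-equation at $-s_t$ reads
$$
\sum_{k\ge 0}\frac{1}{k!}P\big([[\dots[[\Pi,\pi^*f_t],-s_t],\dots],-s_t]\big)
= P\big(e^{[\cdot,-s_t]}X_{\pi^*f_t}\big),
$$
using that $[\Pi,\pi^*f_t]=-X_{\pi^*f_t}$ and that the exponential of the adjoint action of $-s_t$ assembles the nested brackets. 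This is exactly the expression appearing in the explicit rewriting preceding the proposition.

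The core of the argument is therefore the chain of equalities transforming the algebraic expression $P(e^{[\cdot,-s_t]}X_{\pi^*f_t})$ into the geometric projection $\pr_{s_t}(X_{\pi^*f_t}\vert_{\graph(s_t)})$. First I would invoke \cite[Prop.~1.15]{OPPois} (valid here since $X_{\pi^*f_t}$ is fibrewise entire) to identify $e^{[\cdot,-s_t]}X_{\pi^*f_t}$ with the pushforward $(\psi_{-s_t})_*X_{\pi^*f_t}$. Next, the projection $P$ applied to a vector field on $E$ amounts, after restriction to the zero section $C$, to the vertical projection $\pr_0$; pulling this back along $\psi_{-s_t}$—which carries $\graph(s_t)$ to $C$, preserves the fibres of $\pi$, and hence intertwines $\pr_{s_t}$ with $\pr_0$—converts $\pr_0\big((\psi_{-s_t})_*(X_{\pi^*f_t}\vert_{\graph(s_t)})\big)$ into $\pr_{s_t}(X_{\pi^*f_t}\vert_{\graph(s_t)})$. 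The final equality uses that $\psi_{-s_t}$ acts as the identity on the base, so applying its pushforward to an already vertical vector does not change the relevant component.

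The main obstacle I anticipate is the bookkeeping of signs, specifically reconciling the sign convention $\frac{\partial}{\partial t}(-s_t)$ on the Maurer-Cartan side with $\frac{\partial}{\partial t}s_t$ in~\eqref{eq:prv}. The preceding computation is phrased in terms of $-s_t$ and of a family $f_t$; to land on the stated form of the proposition one reverses the sign in front of $f_t$ (replacing $f_t$ by $-f_t$), which flips the sign of $X_{\pi^*f_t}$ and simultaneously the sign of $\frac{\partial}{\partial t}(-s_t)$, yielding~\eqref{eq:prv} with $\frac{\partial}{\partial t}s_t$ on the left. Since $f_t$ ranges over all smooth one-parameter families of functions, this reparametrisation is harmless and establishes the equivalence in both directions. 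I would close by remarking that the smoothness and endpoint conditions on $(s_t)$ and $(f_t)$ transfer directly between the two formulations, so no additional regularity hypotheses are needed.
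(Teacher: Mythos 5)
Your proposal is correct and takes essentially the same route as the paper's own proof: unwinding Definition \ref{def:gaugeGammaU} with degree $-1$ elements $f_t\in\mathcal{C}^\infty(C)$, identifying the gauge series with $P(e^{[\cdot,-s_t]}X_{\pi^*f_t})$ via \cite[Prop.~1.15]{OPPois}, converting $P$ and $\pr_0$ into $\pr_{s_t}$ using that $\psi_{-s_t}$ maps $\graph(s_t)$ to $C$ and preserves the fibres of $\pi$, and then absorbing the residual sign by the substitution $f_t\mapsto -f_t$. The one small discrepancy is your stated convention $[\Pi,\pi^*f_t]=-X_{\pi^*f_t}$ (the paper uses $X_{\pi^*f_t}=[\Pi,\pi^*f_t]$, e.g.\ in the proofs of Propositions \ref{prop:trivialclass} and \ref{lem:flatlift}), but this is harmless since, as you note, reparametrising $f_t$ over all smooth families absorbs it.
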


\subsection{Technical Lemmata}

We establish some technical lemmata that we use subsequently to relate various
notions of equivalence between coisotropic sections.

\begin{remark}
Throughout this subsection, $A$ denotes a vector bundle over a smooth manifold $M$.
Given  a section $s$ of $A$ and a point $y\in \graph(s)$, we have a splitting $T_yA=V_y\oplus T_y \graph(s)$ of the tangent space to $A$ at $y$, where $V:=\ker(d\pi)$ is the vertical bundle. We will denote by $\pr_{s}$ the projection $T_yA\to V_y$ with kernel $T_y \graph(s)$.
\end{remark}

 \begin{lemma}\label{lem:techeasy}
Let $X_t$ be a one-parameter family of vector fields on $A$, and $\phi_{t}$ its flow. Moreover, let $s_t$ be a one-parameter family of sections of $A$ such that
$$  \graph(s_t)= \phi_{t}(\graph(s_{0}))$$
holds for all $t\in [0,1]$.

Then $s_t$ satisfies the equation
 \begin{equation*}
\frac{\partial}{\partial t} s_t= \pr_{s_t}X_{t},\;\;\;\;\;\forall t\in [0,1],
\end{equation*}
which we see as an equality of sections of $V\vert_{\graph(s_t)}$.
\end{lemma}

\begin{proof}
If we define $\psi_t$ to be the isotopy of $M$ given by
$\pi \circ \phi_t \circ s_0$, we have
$$ s_t =\phi_t \circ s_{0} \circ (\psi_t)^{-1}: M\to A.$$ Evaluating at $x\in M$ and taking the time derivative we obtain
$$\frac{\partial}{\partial t} (s_t(x))= X_t \vert_{s_t(x)}
+ (\phi_t)_*(s_{0})_* \frac{\partial}{\partial t} ((\psi_t)^{-1}(x)).$$
We finish noticing that the last summand is tangent to  $\phi_t(\graph(s_{0}))=\graph(s_t)$, and that $\frac{\partial}{\partial t} (s_t(x))$ lies in $V_{s_t(x)}$.
\end{proof}

The following Lemma, whose (geometric) proof  was communicated to us by Luca Vitagliano, is a converse to Lemma \ref{lem:techeasy}.

 \begin{lemma}\label{lem:techhard}
Let $X_t$ be a one-parameter family of vector fields on $A$, and $\phi_t$ its flow, assumed to exist for all $t\in [0,1]$. Suppose $s_t$ is a one-parameter family of sections of $A$ that satisfies
\begin{equation}\label{diffeq}
\frac{\partial}{\partial t} s_t= \pr_{s_t}X_t,\;\;\;\;\;\forall t\in [0,1].
\end{equation}
Then the family of submanifolds $ \graph(s_t)$ coincides with $\phi_t(\graph(s_{0}))$ for all $t\in [0,1]$.
\end{lemma}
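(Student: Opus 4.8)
The plan is to exhibit the flow $\phi_t$ as moving $\graph(s_0)$ through a combination of the \emph{vertical} change recorded by $s_t$ and a \emph{reparametrization} of the base $M$, and then to read off the claimed equality of submanifolds. Along $\graph(s_t)$, decompose each value $X_t\vert_{s_t(u)} \in T_{s_t(u)}A = V_{s_t(u)} \oplus T_{s_t(u)}\graph(s_t)$ into its vertical component $\pr_{s_t}(X_t\vert_{s_t(u)})$ and its component tangent to the graph. The hypothesis \eqref{diffeq} says exactly that the vertical component equals $\frac{\partial}{\partial t}s_t\vert_u$. Since $d\pi$ kills the vertical part, the tangential part descends to a time-dependent vector field $Y_t$ on $M$ defined by $Y_t\vert_u := d\pi\big(X_t\vert_{s_t(u)}\big)$. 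Let $\psi_t$ be the flow of $Y_t$ with $\psi_0 = \mathrm{id}$. The heart of the argument is the claim that $\phi_t(s_0(x)) = s_t(\psi_t(x))$ for all $x\in M$ and $t\in[0,1]$.

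To prove the claim, fix $x$, set $g(t) := s_t(\psi_t(x))$, and note $g(0) = s_0(x)$. Differentiating and using $\frac{\partial}{\partial t}\psi_t(x) = Y_t\vert_{\psi_t(x)}$ yields
\[
\frac{d}{dt} g(t) = \frac{\partial}{\partial t}s_t\Big\vert_{\psi_t(x)} + (s_t)_*\big(Y_t\vert_{\psi_t(x)}\big).
\]
By \eqref{diffeq} the first summand is the vertical part $\pr_{s_t}(X_t\vert_{g(t)})$. For the second, observe that on $\graph(s_t)$ the differential $d\pi$ restricts to an isomorphism $T\graph(s_t)\to TM$ with inverse $(s_t)_*$, because $\pi\circ s_t = \mathrm{id}$; hence $(s_t)_*\big(d\pi(X_t\vert_{g(t)})\big)$ reproduces precisely the tangential part of $X_t\vert_{g(t)}$. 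Adding the two summands reconstitutes $X_t\vert_{g(t)}$, so $g$ is an integral curve of $X_t$ issuing from $s_0(x)$. By uniqueness of integral curves, $g(t) = \phi_t(s_0(x))$, which is the claim.

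Granting the claim, $\phi_t(\graph(s_0)) = \{\, s_t(\psi_t(x)) : x\in M \,\} = s_t(\psi_t(M))$, and this coincides with $\graph(s_t) = s_t(M)$ as soon as $\psi_t$ is a diffeomorphism of $M$. I expect this last point, rather than the geometric identity, to be the main obstacle: a priori the flow of $Y_t$ need not exist for all $t\in[0,1]$ when $M$ is noncompact. I would resolve it by an escape-time argument that leverages the standing hypothesis that $\phi_t$ exists on all of $[0,1]$. On the maximal existence interval $[0,T)$ of $\psi_\cdot(x)$, the computation above already forces $\psi_t(x) = \pi(g(t)) = \pi(\phi_t(s_0(x)))$; as $t\to T^-$ the right-hand side converges, since $\phi_t$ is defined on $[0,1]$, so the integral curve $\psi_\cdot(x)$ cannot escape in finite time and extends past any $T\le 1$. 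Running the same argument backward in time with $\phi_t^{-1}$ gives surjectivity of $\psi_t$, while injectivity follows at once from injectivity of $\phi_t$ and $s_0$. Thus $\psi_t$ is a diffeomorphism and the equality $\graph(s_t)=\phi_t(\graph(s_0))$ follows.
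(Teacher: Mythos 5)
Your proof is correct, but it takes a genuinely different route from the paper's. The paper proves the lemma by a suspension trick: it passes to the bundle $A\times[0,1]\to M\times[0,1]$, forms the single vector field $\widehat{X}\vert_{(y,t)}=X_t\vert_y+\pd{t}$ and the total section $\widehat{s}(x,t)=(s_t(x),t)$, and observes that equation \eqref{diffeq} says precisely that $\widehat{X}$ is tangent to $\graph(\widehat{s})$; invariance of the graphs under the flow then falls out immediately, with no auxiliary ODE on the base. You instead reconstruct the base isotopy explicitly: you descend the tangential part of $X_t$ to $Y_t\vert_u=d\pi\bigl(X_t\vert_{s_t(u)}\bigr)$, prove the conjugation identity $\phi_t\circ s_0=s_t\circ\psi_t$ by uniqueness of integral curves (your decomposition is sound, since $(s_t)_*$ inverts $d\pi\vert_{T\graph(s_t)}$), and then must separately show that $\psi_t$ is defined on all of $[0,1]$ and is onto. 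Your escape-time argument — using $\psi_t=\pi\circ\phi_t\circ s_0$ on the maximal interval to trap the integral curve in a compact set, plus time reversal (the reversed data $\tilde{s}_\tau=s_{t_0-\tau}$, $\tilde{X}_\tau=-X_{t_0-\tau}$ again satisfy \eqref{diffeq}, and their flow $\phi_{t_0-\tau}\circ\phi_{t_0}^{-1}$ is complete) for the reverse inclusion — is valid and correctly leverages the standing completeness hypothesis on $\phi_t$. What your route buys is an explicit formula for the base reparametrization: it is exactly the isotopy $\psi_t=\pi\circ\phi_t\circ s_0$ appearing in the proof of the converse Lemma \ref{lem:techeasy}, so your argument makes the two lemmas literal mirror images, and it isolates precisely where completeness of $\phi_t$ enters — consistent with the counterexample following Lemma \ref{lem:techhard2}, your completeness step is the one that would fail without that hypothesis. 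What the paper's route buys is brevity and robustness: tangency makes graph-invariance automatic, no completeness analysis for an auxiliary flow on $M$ is required, and the same tangency computation is recycled verbatim in the cutoff argument proving Lemma \ref{lem:techhard2}.
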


\begin{proof}
We work on the vector bundle $A\times [0,1]\to M\times [0,1]$, and denote by $t$ the standard coordinate on the $[0,1]$-factor. Define $\widehat{s}\in \Gamma(A\times [0,1])$ by $$\widehat{s}(x,t)=(s_t(x),t)$$ and the vector field $\widehat{X}$ on $A\times [0,1]$ by $$\widehat{X}\vert_{(y,t)}=(X_{t})\vert_{y}+\pd{t}.$$ Notice that the flow $\varphi_t$ of $\widehat{X}$ takes $(y,0)$ to $(\phi_t(y),t)$ for all $y\in A$.

The key observation is that the vector field $\widehat{X}$ is tangent to the submanifold $\graph(\widehat{s})$. To this end we compute
\begin{equation*} \frac{d}{dt} \widehat{s}(x,t)
=\frac{d}{dt} s_{t}(x)+\pd{t}
= (X_{t})\vert_{s_{t}(x)}-v+\pd{t}=
\widehat{X}|_{({s_{t}(x)},{t})} -v
\end{equation*}
for some vector $v\in T_{s_{t}(x)}(\graph(s_{t}))$,  making use of equation \eqref{diffeq} in the second equality. This implies that  $\widehat{X}|_{({s_{t}(x)},{t})}=\frac{d}{dt} \widehat{s}(x,t)+v$ is the sum of two vectors tangent to $\graph(\widehat{s})$.

Hence the flow $\varphi_t$ of $\widehat{X}$ maps $\graph(\widehat{s}\vert_{M\times \{0\}})=\graph(s_{0})\times \{0\}$ to 
 $\graph(\widehat{s}\vert_{M\times \{t\}})=\graph(s_{t})\times \{t\}$. On the other hand, we saw above that  $\varphi_t$ maps $\graph(s_{0})\times \{0\}$ to $\phi_t(\graph(s_{0}))\times\{t\}$.
\end{proof}

In Lemma \ref{lem:techhard} we assume that the flow of $X_t$ is defined on the interval $[0,1]$. We now show that this assumption can be replaced by asking that the base $M$ of the vector bundle be compact.

\begin{lemma}\label{lem:techhard2}
 Let $\pi\colon A\to M$ be a vector bundle over a compact base $M$. Let $X_t$ be a one-parameter family of vector fields on $A$ and $s_t$ a one-parameter family of sections of $A$ that satisfies
\begin{equation*}
\frac{\partial}{\partial t} s_{t}= \pr_{s_t}X_{t},\;\;\;\;\;\forall t\in [0,1].
\end{equation*}
Then the flow lines of $X_t$ starting at $\graph(s_0)$ exist for $t\in [0,1]$
and the equality 
 \begin{equation*}
 \graph(s_t)= \phi_{t}(\graph(s_{0}))
\end{equation*}
 holds.
\end{lemma}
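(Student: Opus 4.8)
The plan is to reduce this statement to the already-established Lemma \ref{lem:techhard}, whose only extra hypothesis is the existence of the flow of $X_t$ on the whole interval $[0,1]$. Thus the sole task is to upgrade the compactness of the base $M$ into a guarantee that the flow lines of $X_t$ starting on $\graph(s_0)$ survive for all $t\in[0,1]$. The key idea is that, under the differential equation $\frac{\partial}{\partial t}s_t=\pr_{s_t}X_t$, the submanifold $\graph(s_t)$ is a compact subset of $A$ that is transported by the flow, so the flow lines cannot escape to infinity in finite time.

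First I would work, as in the proof of Lemma \ref{lem:techhard}, on the product bundle $A\times[0,1]\to M\times[0,1]$ with the vector field $\widehat{X}\vert_{(y,t)}=(X_t)\vert_y+\pd{t}$ and the section $\widehat{s}(x,t)=(s_t(x),t)$. The computation carried out there shows verbatim that $\widehat{X}$ is tangent to the submanifold $\graph(\widehat{s})\subset A\times[0,1]$; this step uses only the differential equation \eqref{diffeq} and not the existence of the flow, so it is available to us. Because $M$ is compact and $[0,1]$ is compact, each slice $\graph(s_t)\times\{t\}$ is compact, and in fact $\graph(\widehat{s})$ is a compact submanifold of $A\times[0,1]$. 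A vector field that is tangent to a compact submanifold restricts to a vector field on that submanifold, and vector fields on compact manifolds are complete; hence the integral curves of $\widehat{X}\vert_{\graph(\widehat{s})}$ through the initial slice $\graph(s_0)\times\{0\}$ are defined for all relevant time.

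The main point I expect to require care is relating the flow of the \emph{restricted} field $\widehat{X}\vert_{\graph(\widehat{s})}$ to the flow of the \emph{ambient} field $\widehat{X}$ (equivalently, of $X_t$ on $A$): by uniqueness of integral curves, a curve in $\graph(\widehat{s})$ that integrates the restricted field also integrates the ambient field, so the ambient flow line starting at a point of $\graph(s_0)\times\{0\}$ stays inside $\graph(\widehat{s})$ and therefore exists as long as the restricted one does, namely for all $t\in[0,1]$. Since $\widehat{X}$ projects to $\pd{t}$ on the $[0,1]$-factor, its flow advances the time coordinate at unit speed, so $\varphi_t$ maps $\graph(s_0)\times\{0\}$ into the slice at time $t$; this confirms that the original flow $\phi_t$ of $X_t$ exists on $\graph(s_0)$ for $t\in[0,1]$.

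Having secured completeness of the flow on $\graph(s_0)$ over $[0,1]$, I would then invoke Lemma \ref{lem:techhard} directly to conclude the identity $\graph(s_t)=\phi_t(\graph(s_0))$. Alternatively, the tangency argument already proves this equality en route, since $\varphi_t$ carries $\graph(s_0)\times\{0\}$ to $\graph(\widehat{s}\vert_{M\times\{t\}})=\graph(s_t)\times\{t\}$ while simultaneously carrying it to $\phi_t(\graph(s_0))\times\{t\}$, exactly as in the earlier proof; the compactness enters only to license that $\varphi_t$ is defined on all of $\graph(s_0)$ for the full time interval.
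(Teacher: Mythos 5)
Your proof is correct, but it follows a genuinely different route from the paper's. The paper keeps the argument in the ambient space $A$: it fixes an auxiliary fibre metric, encloses all the graphs $\graph(s_t)$ in a compact set $K$ of bounded-length vectors, multiplies $X_t$ by a compactly supported cutoff $\varphi$ with $\varphi\vert_K\equiv 1$ to obtain a complete family $\varphi X_t$, and then runs a maximality/contradiction argument on $T:=\max\{t:\graph(s_\tau)=\phi_\tau(\graph(s_0))\ \forall \tau\le t\}$, using that $X_t$ and $\varphi X_t$ agree on $K$ so that Lemma \ref{lem:techhard} applies near time $T$. You instead extract flow existence directly from the tangency statement: since the computation in Lemma \ref{lem:techhard} shows $\widehat{X}$ is tangent to $\graph(\widehat{s})$ using only the differential equation, uniqueness of integral curves traps the ambient flow lines inside the compact set $\graph(\widehat{s})$, whence they exist on all of $[0,1]$. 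Your approach avoids both the auxiliary metric/cutoff and the maximal-time bootstrap, which is arguably cleaner; the paper's cutoff argument is more hands-on but requires no discussion of restricted versus ambient flows. One small point deserves more care in your write-up: $\graph(\widehat{s})\cong M\times[0,1]$ is a compact manifold \emph{with boundary}, so the slogan ``vector fields on compact manifolds are complete'' does not apply verbatim. This is easily repaired: either note that the $t$-coordinate of any integral curve of $\widehat{X}$ advances at unit speed, so a curve starting on the face $\graph(s_0)\times\{0\}$ can only reach the boundary again at the face $t=1$, and compactness rules out escape before then; or transport $\widehat{X}\vert_{\graph(\widehat{s})}$ through the diffeomorphism $\graph(\widehat{s})\cong M\times[0,1]$ to a time-dependent vector field on the compact manifold $M$, whose flow exists for the whole parameter interval. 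With that refinement your argument is complete, and the final identification $\graph(s_t)=\phi_t(\graph(s_0))$ follows exactly as at the end of the proof of Lemma \ref{lem:techhard}.
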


\begin{proof}
Fix an auxiliary fibre metric on $A$. We let $K\subset A$ be the compact subset given by
all vectors of length less than or equal to $l+\delta$ for some $\delta>0$, where
$$ l := \max_{x\in M,\ t \in[0,1]} (\vert\vert s_t(x)\vert\vert).$$

 Let $\varphi$ be a function on $A$ with compact support, and so that $\varphi\vert_K\equiv 1$. Then $(\varphi X_t)_{t\in [0,1]}$ is a time-dependent vector field whose integral curves are defined for all times.  Let $T$ be the maximal element of $[0,1]$ such that $\graph(s_t)=\phi_t(\graph(s_0))$ holds for all $t\in [0,T]$. 
 Suppose $T<1$.
There is $\epsilon>0$ such that $\phi_t(\graph(s_{0}))\subset K$ for all $t\in [0,T+\epsilon]$. But since the one-parameter families $X_t$ and $\varphi X_t$ agree on $K$, we see as in Lemma \ref{lem:techhard} that 
 $\graph(s_t) = \phi_t(\graph(s_0))$ actually holds
  for all $t\in [0,\mathrm{min}\{1,T+\epsilon\}]$, which is a contradiction.
\end{proof}

\begin{remark}
The compactness assumption in Lemma \ref{lem:techhard2} can not be omitted, as the following counter-example shows. Take a non-compact manifold
$M$, a vector field $X$ on $M$ whose flow is not defined on the whole of $[0,1]$. Take the trivial bundle $A:=M\times [0,1]$
and let $X_t$ be the horizontal lift of $X$ to $A$. Moreover, let $\graph(s_t)$ be $M\times \{0\}$. Notice that 
$\frac{\partial}{\partial t} s_t$ and
 $\pr_{s_t}X_{t}$ agree, since they both vanish identically.
\end{remark}

\subsection{Hamiltonian equivalence $=$ gauge-equivalence} \label{subsection: Ham diffeos - equivalences match}

Our aim is to compare the two equivalence relations $\sim_{\gauge}$ and $\sim_{\Ham}$
on $\Defor_U(E)$. As an intermediate notion we introduce:

\begin{definition}\label{def:baseham}
 One can restrict Hamiltonian equivalence $\sim_{\Ham}$ by only allowing Hamiltonian flows
 generated by functions of the type $\pi^*f$, with $f\in \mathcal{C}^\infty(C)$.
 We call the resulting equivalence relation {\bf base Hamiltonian equivalence} and denote it by
 $\sim_{b\Ham}$.
\end{definition}

\begin{proposition}\label{prop:implications1}
The following chain of implications holds between the three equivalence 
relations on $\Defor_U(C)$:
$$
\xymatrix{
\textrm{base Hamiltonian equivalence } \sim_{b\Ham} \ar@{=>}[d]^{(1)}  \\
 \textrm{Hamiltonian equivalence } \sim_{\Ham}\ar@{=>}[d]^{(2)} \\
\textrm{gauge-equivalence } \sim_{\gauge}.}
$$
\end{proposition}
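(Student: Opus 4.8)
The plan is to handle the two implications separately; implication (1) is formal, while implication (2) carries the real content. For (1) there is essentially nothing to prove: by Definition \ref{def:baseham}, base Hamiltonian equivalence is obtained from Hamiltonian equivalence by restricting the admissible generating functions to those of the form $\pi^*f$ with $f\in\mathcal{C}^{\infty}(C)$. Hence any data witnessing $s_0\sim_{b\Ham}s_1$ is, a fortiori, data witnessing $s_0\sim_{\Ham}s_1$, and (1) follows immediately.

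For implication (2), I would start from a Hamiltonian equivalence, given by a family of coisotropic sections $s_t$ and a Hamiltonian isotopy $\phi_t$ generated by $X_{H_t}$ with $\graph(s_t)=\phi_t(\graph(s_0))$. Applying Lemma \ref{lem:techeasy} to $\phi_t$ and $s_t$ yields
\[
\frac{\partial}{\partial t} s_t = \pr_{s_t}\big(X_{H_t}\vert_{\graph(s_t)}\big)
\]
as an equality of sections of the vertical bundle along $\graph(s_t)$. In view of Proposition \ref{algequiv}, to conclude $s_0\sim_{\gauge}s_1$ it then suffices to exhibit a smooth family $f_t\in\mathcal{C}^{\infty}(C)$ for which $\pr_{s_t}\big(X_{\pi^*f_t}\vert_{\graph(s_t)}\big)$ equals the right-hand side of the displayed equation.

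The conceptual core of the argument is the claim that $\pr_{s_t}\big(X_{H}\vert_{\graph(s_t)}\big)$ depends on $H$ only through its restriction $H\vert_{\graph(s_t)}$. Indeed, if $g$ vanishes on $\graph(s_t)$, then $dg\vert_{\graph(s_t)}$ lies in the conormal bundle $(T\graph(s_t))^\circ$, and since $\graph(s_t)$ is coisotropic, $\Pi^{\sharp}$ maps this conormal bundle into $T\graph(s_t)$; thus $X_g\vert_{\graph(s_t)}=\Pi^{\sharp}(dg)\vert_{\graph(s_t)}$ is tangent to $\graph(s_t)$ and is annihilated by $\pr_{s_t}$. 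This is exactly the mechanism already used in the proof of Proposition \ref{prop:trivialclass}. It therefore suffices to choose $f_t$ so that $\pi^*f_t$ agrees with $H_t$ along $\graph(s_t)$. Since $\pi$ restricts to a diffeomorphism $\graph(s_t)\to C$ with inverse $s_t$, the choice $f_t(x):=H_t(s_t(x))$ is smooth in $(x,t)$ and satisfies $(\pi^*f_t)\vert_{\graph(s_t)}=H_t\vert_{\graph(s_t)}$ by construction. Combining this with the factorisation claim and with the displayed equation gives $\frac{\partial}{\partial t} s_t=\pr_{s_t}\big(X_{\pi^*f_t}\vert_{\graph(s_t)}\big)$, and since the family $s_t$ already interpolates $s_0$ and $s_1$, Proposition \ref{algequiv} yields $s_0\sim_{\gauge}s_1$.

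The step I expect to be the main obstacle is making the factorisation claim fully precise, that is, verifying carefully that coisotropicity of $\graph(s_t)$ forces the vertical projection of a Hamiltonian vector field to depend only on the boundary values $H_t\vert_{\graph(s_t)}$; this is the single point where coisotropicity enters essentially. Everything else --- the existence and smoothness of $f_t$, and the bookkeeping of endpoints at $t=0,1$ --- is routine, and I would only take care to match the sign conventions relating the geometric Hamiltonian flow to the derived-bracket formula underlying Proposition \ref{algequiv}.
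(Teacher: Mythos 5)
Your proposal is correct and follows essentially the same route as the paper's own proof: both apply Lemma \ref{lem:techeasy} to obtain $\frac{\partial}{\partial t}s_t=\pr_{s_t}X_{H_t}$, define $f_t:=H_t\circ s_t$, and use coisotropicity of $\graph(s_t)$ to show $\Pi^{\sharp}$ sends the conormal direction $d(H_t-\pi^*f_t)$ into $T\graph(s_t)$, so that $\pr_{s_t}X_{H_t}=\pr_{s_t}X_{\pi^*f_t}$, concluding via Proposition \ref{algequiv}. The ``factorisation claim'' you flagged as the main obstacle is exactly the paper's one-line coisotropicity observation, and your verification of it is complete.
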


\begin{proof}
Implication (1) is clear, so we pass on to implication (2).
Let $s_t$ be a smooth family of coisotropic sections of $U$
and suppose that $H_t$ is a smooth family of functions on $U$ such that the Hamiltonian
flow $\phi^{H_t}_t$ of $H_t$ maps $\graph(s_0)$ to $\graph(s_t)$.  
By Lemma \ref{lem:techeasy}, this implies that the equation
\begin{equation*}
\frac{\partial}{\partial t} s_t= \pr_{s_t}X_{H_t},
\end{equation*}
holds for all $t\in [0,1]$.
Define $f_t\in \mathcal{C}^{\infty}(C)$ to be $H_t\circ s_t$.

Observe that $\pr_{s_t}(X_{H_t} -X_{\pi^*f_t})$ is zero
since $H_t - \pi^*f_t$ vanishes on $\graph(s_t)$
and consequently $X_{H_t -\pi^*f_t} = X_{H_t} - X_{\pi^*f_f}$ gets mapped to $T\graph(s_t)$ under $\Pi^{\sharp}$, since $\graph(s_t)$ is coisotropic.
We conclude that the equation
\begin{equation*}
\frac{\partial}{\partial t} s_t= \pr_{s_t}X_{H_t} = \pr_{s_t}(X_{\pi^*f_t})
\end{equation*}
holds. By Proposition \ref{algequiv} we have that $s_0$ and $s_1$ are
gauge-equivalent as claimed.
\end{proof}

Under the assumption that  $C$ is compact, we can ``close the circle'' of the implications of Proposition \ref{prop:implications1}:

\begin{proposition}\label{prop:compacteq}
Suppose $C$ is compact coisotropic submanifold. Then the following implication holds for the local symplectic model
of $C$:
$$
\xymatrix{
\textrm{gauge-equivalence }\sim_{\gauge}\ar@{=>}[d] \\
\textrm{base Hamiltonian equivalence }\sim_{b\Ham}.}$$
\end{proposition}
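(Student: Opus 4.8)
The plan is to start from the algebraic characterization of gauge-equivalence in Proposition \ref{algequiv} and to produce the required base Hamiltonian isotopy explicitly. So suppose $s_0 \sim_{\gauge} s_1$. By Proposition \ref{algequiv} there is a family $s_t \in \Gamma(U)$ joining $s_0$ to $s_1$ and a family $f_t \in \mathcal{C}^{\infty}(C)$ with
\[
\frac{\partial}{\partial t} s_t = \pr_{s_t}\big(X_{\pi^*f_t}\vert_{\graph(s_t)}\big), \qquad t \in [0,1].
\]
The vector field $X_{\pi^*f_t}$ is the Hamiltonian vector field of the base-type function $\pi^*f_t$, so it suffices to show that its flow $\phi_t$ exists along $\graph(s_0)$ for $t \in [0,1]$ and satisfies $\graph(s_t) = \phi_t(\graph(s_0))$. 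Such a $\phi_t$ is precisely a (locally defined) Hamiltonian isotopy generated by a base-type function, i.e.\ a base Hamiltonian isotopy in the sense of Definition \ref{def:baseham}, and since $\phi_t$ is a symplectomorphism sending the coisotropic $\graph(s_0)$ to $\graph(s_t)$, the latter are automatically coisotropic; this is exactly the data witnessing $s_0 \sim_{b\Ham} s_1$.

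The one point needing care is the existence of this flow, and this is where compactness of $C$ enters, entirely in the spirit of Lemma \ref{lem:techhard2}. Note that $X_{\pi^*f_t}$ is a priori defined only on $U$, since it is built from the Poisson bivector $\Pi$ of $\Omega$, so its integral curves could a priori leave $U$ or fail to persist on all of $[0,1]$. I would first observe that, because $C$ and $[0,1]$ are compact, the set $\mathcal{G} := \bigcup_{t \in [0,1]} \graph(s_t)$ is a compact subset of the open set $U$. Following the proof of Lemma \ref{lem:techhard2}, I then choose a compactly supported function $\varphi$ on $E$ with support in $U$ and with $\varphi \equiv 1$ on an open neighbourhood of $\mathcal{G}$, and pass to the complete time-dependent vector field $\widehat{X}_t := \varphi\, X_{\pi^*f_t}$, extended by zero outside $U$.

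Since $\widehat{X}_t$ agrees with $X_{\pi^*f_t}$ near each $\graph(s_t)$, the displayed identity also reads $\frac{\partial}{\partial t} s_t = \pr_{s_t}\widehat{X}_t$, so Lemma \ref{lem:techhard} applies to the complete field $\widehat{X}_t$ and gives $\graph(s_t) = \widehat{\phi}_t(\graph(s_0))$ for its flow $\widehat{\phi}_t$. The decisive final observation is that each trajectory $t \mapsto \widehat{\phi}_t(p)$ with $p \in \graph(s_0)$ stays inside $\mathcal{G}$, hence inside the region where $\varphi \equiv 1$ and $\widehat{X}_t = X_{\pi^*f_t}$; consequently it is in fact an integral curve of the genuine base-Hamiltonian field $X_{\pi^*f_t}$. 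Thus the locally defined flow of $X_{\pi^*f_t}$ exists along $\graph(s_0)$ for all $t \in [0,1]$ and maps $\graph(s_0)$ to $\graph(s_t)$, producing the desired base Hamiltonian isotopy.

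The main obstacle is exactly this completeness issue: without compactness the flow of $X_{\pi^*f_t}$ need not reach $\graph(s_t)$ (compare the counter-example following Lemma \ref{lem:techhard2}), and the implication genuinely fails. The cutoff by $\varphi$ serves only to confine the relevant trajectories to a fixed compact subset of $U$; crucially, because $\varphi \equiv 1$ there, the modification does not alter $X_{\pi^*f_t}$ along the graphs, so the generating Hamiltonian remains of base type and the resulting isotopy is a base Hamiltonian one rather than merely a Hamiltonian one.
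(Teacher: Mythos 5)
Your proof is correct and follows essentially the same route as the paper: there, gauge-equivalence is likewise unfolded via Proposition \ref{algequiv} into the ODE $\frac{\partial}{\partial t}s_t=\pr_{s_t}X_{\pi^*f_t}$, and then Lemma \ref{lem:techhard2} is invoked, whose proof is precisely your cutoff mechanism (a compact set containing all the graphs, the complete field $\varphi X_{\pi^*f_t}$, and trajectories confined to the region where $\varphi\equiv 1$). The only difference is cosmetic: you re-prove that lemma inline with a direct argument near $\bigcup_{t}\graph(s_t)$ (avoiding the paper's maximal-time contradiction argument) and you make explicit two points the paper glosses over, namely that $X_{\pi^*f_t}$ is only defined on $U$ and that the intermediate graphs are automatically coisotropic once they are images of $\graph(s_0)$ under a symplectomorphism.
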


\begin{proof}
Suppose that $s_0$ and $s_1$ of $\Defor_U(C)$ are gauge-equivalent.
This means that there is a one-parameter family $s_t$ in $\Defor_U(C)$
and a one-parameter family of functions $f_t$ on $C$ such that
$$
\frac{\partial}{\partial t} s_t = \pr_{s_t} X_{\pi^*f_t}
$$
holds for all $t\in [0,1]$.

The compactness of $C$  allows us to apply Lemma \ref{lem:techhard2}, which states that the flow $\phi_t$ of $X_{\pi^*f_t}$ exists for all $t\in [0,1]$ and    
  indeed maps $\graph(s_0)$ to $\graph(s_t)$. 
\end{proof}

\begin{remark}
When $C$ is a Lagrangian submanifold, {Hamiltonian equivalence } implies base Hamiltonian equivalence without any compactness assumption: this follows from 
Proposition \ref{prop:implications1} and Proposition \ref{prop:compacteq}, noticing that in the latter in the Lagrangian case no compactness is necessary, for $X_{\pi^*f_t}$ is a vertical vector field on $U\subset T^*C$.
In particular, if $(\phi_t)_{t\in[0,1]}$ is an isotopy by Hamiltonian diffeomorphisms mapping the zero section $C$ to sections of $U$ for all $t\in [0,1]$, then $\phi_1(C)$ is the graph of an exact 1-form on $C$. This 
 is in agreement with \cite[Proposition 9.33]{McSalTop}.
\end{remark}

Combining Proposition \ref{prop:implications1} and Proposition \ref{prop:compacteq} we arrive
at the main result of this section:

\begin{theorem}\label{cor:3agree} 
Let $C$ be a compact coisotropic submanifold with local symplectic model $(U,\Omega)$.
The equivalence relations on
$$ \Defor_U(C) := \{s\in \Gamma(U): s \textrm{ is coisotropic} \}$$
given by
\begin{itemize}
\item Hamiltonian equivalence $\sim_{\Ham}$ (Definition \ref{def:hamsymeq}) and
\item gauge-equivalence $\sim_{\gauge}$
(Definition \ref{def:gaugeGammaU}, see also Proposition \ref{algequiv})
\end{itemize}
coincide.
\end{theorem}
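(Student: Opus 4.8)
The plan is to obtain the theorem by chaining together the implications already established in Proposition \ref{prop:implications1} and Proposition \ref{prop:compacteq}. Between the three relations $\sim_{b\Ham}$, $\sim_{\Ham}$, and $\sim_{\gauge}$ on $\Defor_U(C)$, Proposition \ref{prop:implications1} furnishes the chain $\sim_{b\Ham} \Rightarrow \sim_{\Ham} \Rightarrow \sim_{\gauge}$, while Proposition \ref{prop:compacteq} supplies the missing arrow $\sim_{\gauge} \Rightarrow \sim_{b\Ham}$ under the compactness hypothesis. These together form a cycle, forcing all three relations---and in particular $\sim_{\Ham}$ and $\sim_{\gauge}$---to coincide.

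Concretely, for the inclusion $\sim_{\Ham}\,\Rightarrow\,\sim_{\gauge}$ I would simply invoke implication (2) of Proposition \ref{prop:implications1}; no compactness is needed here, since an Hamiltonian isotopy carrying $\graph(s_0)$ to $\graph(s_t)$ already yields, via Lemma \ref{lem:techeasy} and the coisotropy of $\graph(s_t)$, the defining equation of gauge-equivalence from Proposition \ref{algequiv}. For the reverse inclusion $\sim_{\gauge}\,\Rightarrow\,\sim_{\Ham}$, I would first pass from gauge-equivalence to base Hamiltonian equivalence by Proposition \ref{prop:compacteq}, and then from base Hamiltonian equivalence to Hamiltonian equivalence by the (trivial) implication (1) of Proposition \ref{prop:implications1}. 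Combining the two inclusions gives the claimed equality of relations.

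The only substantive step, and the sole place where the compactness of $C$ enters, is the passage $\sim_{\gauge}\,\Rightarrow\,\sim_{b\Ham}$ inside Proposition \ref{prop:compacteq}. Gauge-equivalence is by definition the solvability of the ODE \eqref{eq:prv} for a family of sections $s_t$, whereas Hamiltonian equivalence demands an honest flow of $X_{\pi^*f_t}$ mapping $\graph(s_0)$ to $\graph(s_t)$; the gap between these is exactly the existence of that flow on all of $[0,1]$. Lemma \ref{lem:techhard2} bridges this gap by truncating $X_{\pi^*f_t}$ to a compactly supported---hence complete---vector field agreeing with it on a neighborhood of the relevant graphs, which is possible precisely because $C$, and therefore the image of the sections inside $U$, is compact. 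I expect this integration step to be the main obstacle; as the counter-example following Lemma \ref{lem:techhard2} illustrates, the conclusion genuinely fails without the compactness assumption.
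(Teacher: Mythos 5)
Your proposal is correct and follows exactly the paper's own route: the theorem is obtained by combining Proposition \ref{prop:implications1} and Proposition \ref{prop:compacteq} into a cycle of implications among $\sim_{b\Ham}$, $\sim_{\Ham}$, and $\sim_{\gauge}$, with compactness entering only through Lemma \ref{lem:techhard2} in the passage $\sim_{\gauge}\Rightarrow\sim_{b\Ham}$. Your account of where and why the flow-existence issue arises, including the truncation argument, matches the paper's treatment precisely.
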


As a consequence we obtain the following result:

\begin{theorem} \label{theorem: Hamiltonian equivalence}
Let $C$ be a compact coisotropic submanifold with local symplectic model $(U,\Omega)$.
The bijection
$$\Defor_U(C) \cong \mathsf{MC}_U(\Gamma(\wedge E)[1])$$
descends to a bijection
$$ \Moduli_U^{\Ham}(C) := \Defor_U(C)/\sim_\Ham \cong \mathsf{MC}_U(\Gamma(\wedge E)[1]) /\sim_{\gauge}.$$
\end{theorem}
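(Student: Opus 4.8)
The plan is to deduce this statement formally from the bijection of Theorem \ref{thm:symplectic_case} (see Remark \ref{rem:sminuss}) together with Theorem \ref{cor:3agree}; no new geometric input is needed. Write $\Phi \colon \Defor_U(C) \to \mathsf{MC}_U(\Gamma(\wedge E)[1])$ for the bijection $s \mapsto -s$. The key observation is that the relation $\sim_\gauge$ on $\Defor_U(C)$ was \emph{defined} in Definition \ref{def:gaugeGammaU} precisely by pulling back gauge-equivalence along $\Phi$: by construction, $s_0 \sim_\gauge s_1$ holds if and only if $\Phi(s_0) = -s_0$ and $\Phi(s_1) = -s_1$ are gauge-equivalent Maurer-Cartan elements.

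First I would record the elementary fact that a bijection intertwining two equivalence relations descends to a bijection between the corresponding quotients. Applied to $\Phi$, this yields
$$ \Defor_U(C)/\sim_\gauge \;\cong\; \mathsf{MC}_U(\Gamma(\wedge E)[1])/\sim_\gauge. $$
Next I would invoke Theorem \ref{cor:3agree}, which is where the compactness of $C$ enters: it asserts that $\sim_\Ham$ and $\sim_\gauge$ coincide as equivalence relations on $\Defor_U(C)$. Hence they induce the identical partition of $\Defor_U(C)$, so the quotient sets literally agree,
$$ \Moduli^\Ham_U(C) = \Defor_U(C)/\sim_\Ham = \Defor_U(C)/\sim_\gauge. $$
Combining the two displays produces the asserted bijection.

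I do not expect a genuine obstacle here: the argument is purely formal once the earlier results are available. All the analytic and geometric difficulty has already been absorbed into Theorem \ref{thm:symplectic_case} (convergence of the Maurer-Cartan series and the resulting bijection $\Phi$) and into Theorem \ref{cor:3agree} (the coincidence of Hamiltonian and gauge-equivalence, where compactness enters through Lemma \ref{lem:techhard2}). The one point worth verifying explicitly is that $\Phi$ carries $\sim_\gauge$-classes to gauge-classes bijectively, with neither a well-definedness nor a surjectivity gap; but since $\sim_\gauge$ is defined as the $\Phi$-pullback of gauge-equivalence, this is automatic.
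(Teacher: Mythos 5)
Your proposal is correct and matches the paper's reasoning exactly: the paper states this theorem as an immediate consequence of Theorem \ref{cor:3agree}, since $\sim_\gauge$ on $\Defor_U(C)$ is by Definition \ref{def:gaugeGammaU} the pullback of gauge-equivalence along the bijection $s\mapsto -s$ of Remark \ref{rem:sminuss}, so the quotient identification is purely formal. You correctly identified that all the substance (convergence, and the coincidence of $\sim_\Ham$ with $\sim_\gauge$ under compactness via Lemma \ref{lem:techhard2}) resides in the earlier results.
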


\begin{remark}
\hspace{0cm}
\begin{enumerate}
 \item One could use Theorem \ref{theorem: Hamiltonian equivalence} to rederive
 the infinitesimal description of $\Moduli_U^{\Ham}(C)$ from Subsection \ref{subsection: Ham diffeos - infinitesimal moduli}
 by linearizing the Maurer-Cartan equation and the gauge-equivalence.
 \item A description of $\Moduli_U^{\Ham}(C)$ similar to Theorem \ref{theorem: Hamiltonian equivalence} was obtained
in \cite{FlorianModuli}. There the differential graded Lie algebra
associated to the BFV-complex was used to encode deformations of $C$
and the action of Hamiltonian diffeomorphisms. The BFV-complex has the advantage
that it {works for arbitrary Poisson structures}, unlike  the $L_\infty[1]$-algebra from \cite{OP} and \cite{CaFeCo2}.
The drawbacks of the approach relying on the BFV-complex is that one needs
to single out the geometrically relevant Maurer-Cartan elements by hand and is forced to deal with
symmetries of symmetries.
\end{enumerate}
\end{remark}

\section{Symplectomorphisms}\label{section: symplectomorphisms}

Next we consider the action of symplectomorphisms on the space of coisotropic sections,
which we encode by an equivalence relation $\sim_\Sym$ on the space of coisotropic sections
$\Defor_U(C)$.
In the search for an interpretation of $\sim_\Sym$ in terms of Oh and Park's $L_\infty[1]$-algebra,
we are led to reconsider Voronov's derived bracket construction \cite{vor1,vor2}.

\subsection{The deformation problem}\label{section: symplectomorphisms - deformation problem}
 
Let $C$ be a coisotropic submanifold with local symplectic model $(U,\Omega)$.

\begin{definition}\label{def:symeq}
Two coisotropic sections $s_0$ and $s_1$ of $U$ are called {\bf symplectic equivalent},
$s_0 \sim_\Sym s_1$ if there is a family of coisotropic sections $s_t \in \Gamma(U)$, agreeing with the given
ones at $t=0$ and $t=1$,
and an isotopy of local symplectomorphisms $\phi_t$ such that $\phi_t$ maps $\graph(s_0)$
to $\graph(s_t)$ for all $t\in [0,1]$.  
\end{definition}

\begin{remark}
As for Hamiltonian equivalence, it is straight-forward to check that $\sim_\Sym$ is in fact
an equivalence relation. We define the {\bf symplectic moduli space of coisotropic sections} to be the set
 $$ \Moduli^\Sym_U(C) := \Defor_U(C) / \sim_\Sym.$$
\end{remark}

Our aim is to answer
\begin{center}
{\em How can one describe the set $\Moduli^\Sym_U(C)$?}
\end{center}
which we will achieve in Theorem \ref{theorem: symplectic equivalence} of Subsection \ref{subsection: symplectomorphisms - equivalences coincide}.

\subsection{Infinitesimal moduli}\label{subsection: symplectomorphisms - infinitesimal moduli}

We first consider the infinitesimal counterpart of $\Moduli^\Sym_U(C)$.
We argue -- see Remark \ref{rem:cokernel} -- that the formal tangent space to $\Moduli^\Sym_U(C)$
at the equivalence class of the zero-section $C$
is given by the cokernel of a certain map $r: H^1(C) \to H^1_\mathfrak{F}(C)$.

\begin{remark}
\hspace{0cm}
\begin{enumerate}
 \item Recall that every coisotropic submanifold $C$ comes equipped with a pre-symplectic structure
$\omega_C$, whose kernel $K$ is an involutive distribution. The
 corresponding foliation of $C$ is denoted by $\mathfrak{F}$.
Restriction to $K$ yields a chain map
$$ r: \Omega(C) \to \Omega_\mathfrak{F}(C)$$
between the ordinary and the foliated de Rham complex of $C$.
\item As we observed in Subsection \ref{subsection: coisotropic submanifolds - infinitesimal deformations},
$\Omega_\mathfrak{F}(C)$ is isomorphic to $\Gamma(\wedge E)$, equipped with the differential
$ P([\Pi,\cdot])$, where $P$ is the projection from multivector-fields on $E$ onto $\Gamma(\wedge E)$.
\end{enumerate}
\end{remark}

\begin{lemma}
Let $C$ be a coisotropic submanifold of $(E,\omega)$  {with inclusion map $\iota$}.
Given $\beta \in \Omega^1(E)$, denote by $X_{\beta}$ the unique vector field on $E$ which satisfies
$$ i_{X_{\beta}} \omega = \beta.$$
Then the triangle
$$
\xymatrix{
& & \beta \ar@{|->}[ddll]\in & \Omega^1(E) \ar[ddll] \ar[ddrrr]^{r\circ \iota^*}& && \\
& && & &&\\
P(X_{\beta}) \in & \Gamma(E)\ar[rrrrr]_{\cong} & && && \Omega^1_\mathfrak{F}(C).
}
$$
commutes.
\end{lemma}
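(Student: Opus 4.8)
The plan is to verify commutativity of the triangle fibrewise, unwinding the three maps at a point $x\in C$. Since $\omega$ is symplectic, the bundle map $\omega^{\sharp}\colon TE\to T^*E$, $v\mapsto \omega(v,-)$, is an isomorphism, and the defining relation $i_{X_{\beta}}\omega=\beta$ reads exactly $\omega^{\sharp}(X_{\beta})=\beta$, i.e. $X_{\beta}=(\omega^{\sharp})^{-1}(\beta)$. Recall that for a vector field, $P$ is just restriction to $C$ followed by projection to the normal bundle, so $P(X_{\beta})_x\in E_x$ is the class $[X_{\beta}\vert_x]$ of $X_{\beta}\vert_x\in T_xE$ in $T_xE/T_xC$.

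Next I would make the bottom isomorphism explicit. By Remark \ref{rem:isom}, the identification $E\cong K^*$ is induced by $\omega^{\sharp}$ followed by restriction of covectors to $K=TC^\perp$; concretely, a class $[v]\in E_x$ is sent to $\omega_x(v,-)\vert_{K_x}\in K_x^*$, and this is well defined precisely because coisotropicity gives $(T_xC^\perp)^\perp=T_xC$ as the kernel of $v\mapsto \omega_x(v,-)\vert_{K_x}$. Applying this to $[X_{\beta}\vert_x]$ and using $\omega^{\sharp}(X_{\beta})=\beta$, the image of $P(X_{\beta})$ under the isomorphism $\Gamma(E)\cong\Omega^1_\mathfrak{F}(C)$ is the foliated one-form
$$ x\longmapsto \omega_x(X_{\beta}\vert_x,-)\vert_{K_x}=\beta_x\vert_{K_x}. $$

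Finally I would trace the right-hand route. The pullback $\iota^*\beta$ is the restriction of the covector $\beta$ to $TC$, and the chain map $r$ restricts forms on $C$ further to the subbundle $K\subset TC$; hence $r(\iota^*\beta)_x=\beta_x\vert_{K_x}$. As $K_x\subset T_xC\subset T_xE$, restricting $\beta_x$ first to $T_xC$ and then to $K_x$ coincides with restricting it directly to $K_x$, so both composites equal $\beta_x\vert_{K_x}$ and the triangle commutes.

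The argument is purely pointwise and involves no genuine obstacle; the only step requiring attention is matching the identification $E\cong K^*$ of Remark \ref{rem:isom}, which is built from $\omega^{\sharp}$, with the defining relation $i_{X_{\beta}}\omega=\beta$. Once these are aligned, both composites collapse to the restriction of $\beta$ to the characteristic distribution $K$, and commutativity is immediate.
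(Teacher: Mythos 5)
Your proposal is correct and follows essentially the same route as the paper's proof: both unwind the identification $E\cong K^*$ as $e\mapsto \omega^{\sharp}(e)\vert_{K}$, use that $\omega(v,-)$ vanishes on $K$ for $v\in TC$ to pass from $X_{\beta}$ to its class $P(X_{\beta})$, and observe that both composites reduce to $\beta\vert_{K}$. The only cosmetic slip is attributing $(T_xC^{\perp})^{\perp}=T_xC$ to coisotropicity, whereas this holds for any subspace of a symplectic vector space (coisotropicity is instead what guarantees $K\subset TC$, so that foliated one-forms make sense); this does not affect the validity of the argument.
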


\begin{proof}

The identification $E\cong K^*$ from Section \ref{section: pre-symplectic geometry}, which is used in the bottom map of the above diagram, 
maps  $e\in E_x$ to $\omega^{\sharp}(e)|_{K_x}$.
We have $$\omega^{\sharp}(P(X_{\beta}))|_K=\omega^{\sharp}(X_{\beta})|_K=\beta|_K,$$
where in the first equality we used that $\omega(v,-)$ vanishes on $K$ for all $v\in TC$.
This proves the desired commutativity.
\end{proof}

The following proposition is a special instance of Lemma 6.7 in \cite[Subsection 6.3.]{OhLe},
where the more general case of locally conformal symplectic manifolds is treated. 
In its formulation we make use of
the above isomorphism in order to
view $\frac{\partial s_t}{\partial t}\vert_{t=0}\in \Gamma(E)$ as an element of $\Omega^1_\mathfrak{F}(C)$.

\begin{proposition}\label{prop:infmodsym}
Suppose that $(s_t)_{t\in [0,1]}$ is a family of coisotropic sections that starts at the zero-section
and is trivial under symplectic equivalence, i.e. there is a symplectic isotopy $\phi_t$ such that
 the image of the zero section under $\phi_t$ coincides with
 the graph of $s_t$.

Then the cohomology class of $\frac{\partial s_t}{\partial t}\vert_{t=0}$ in $H^1_\mathfrak{F}(C)$ 
lies in the image of $r: H^1(C) \to H^1_\mathfrak{F}(C)$.
\end{proposition}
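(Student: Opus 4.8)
The plan is to run the proof of Proposition \ref{prop:trivialclass} with the single modification that distinguishes symplectic isotopies from Hamiltonian ones: the generating one-form is now merely \emph{closed} rather than exact. The conclusion is then read off from the commuting triangle established in the preceding Lemma.

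First I would write the symplectic isotopy $\phi_t$ as the flow of a time-dependent vector field $X_t$. Since each $\phi_t$ is symplectic we have $\mathcal{L}_{X_t}\omega=0$, so the one-form $\beta_t:=i_{X_t}\omega$ is closed on $E$; this is the \emph{only} place where being symplectic, rather than Hamiltonian, enters. Because $s_0=0$, the zero section equals $\graph(s_0)$, and the hypothesis reads $\graph(s_t)=\phi_t(\graph(s_0))$. Applying Lemma \ref{lem:techeasy} and evaluating at $t=0$ gives
\[
\frac{\partial s_t}{\partial t}\vert_{t=0}=\pr_{0}\big(X_0\vert_C\big).
\]
Here I would observe that along the zero section one has the splitting $TE\vert_C=TC\oplus V\vert_C$, that $V\vert_C$ is canonically the fibre bundle $E$, and that consequently the vertical projection $\pr_0$ with kernel $TC$ agrees with the projection $P\colon\mathcal{X}(E)\to\Gamma(E)$ of Remark \ref{rem:mu} restricted to vector fields. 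Thus $\frac{\partial s_t}{\partial t}\vert_{t=0}=P(X_0)$.

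Next I would invoke the preceding Lemma: its commuting triangle asserts that, under the isomorphism $\Gamma(E)\cong\Omega^1_\mathfrak{F}(C)$, the element $P(X_{\beta_0})=P(X_0)$ coincides with $r(\iota^*\beta_0)$. Since $\beta_0$ is closed, its pullback $\iota^*\beta_0$ is a closed one-form on $C$ and hence defines a class $[\iota^*\beta_0]\in H^1(C)$. As $r$ is a chain map it descends to $r\colon H^1(C)\to H^1_\mathfrak{F}(C)$, and the cohomology class of $\frac{\partial s_t}{\partial t}\vert_{t=0}$ equals $r\big([\iota^*\beta_0]\big)$, which by construction lies in the image of $r$. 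This is exactly the assertion.

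The computation is essentially routine once the preceding Lemma is available; the only point requiring genuine care is the identification $\pr_0(X_0\vert_C)=P(X_0)$ at the zero section, and, more conceptually, the recognition that closedness of $\beta_0$ (as opposed to exactness) is precisely what upgrades the trivial class of Proposition \ref{prop:trivialclass} to a class in the image of $r$. Indeed, in the Hamiltonian special case $\beta_0=dH_0$ is exact, so $\iota^*\beta_0=d(H_0\vert_C)$ and $[\iota^*\beta_0]=0$, recovering Proposition \ref{prop:trivialclass} as the sub-case $\mathrm{im}(r)\ni 0$.
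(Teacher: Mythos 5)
Your proof is correct and follows essentially the same route as the paper's: both write the isotopy as the flow of $X_t$, note that $\beta_t:=i_{X_t}\omega$ is closed precisely because $\phi_t$ is symplectic, identify $\frac{\partial s_t}{\partial t}\vert_{t=0}$ with $P(X_0)$ via Lemma \ref{lem:techeasy}, and then apply the commuting triangle of the preceding lemma to conclude that the class equals $(r\circ\iota^*)[\beta_0]$, hence lies in the image of $r$. Your explicit check that $\pr_0$ agrees with $P$ on vector fields along the zero section is a detail the paper leaves implicit, not a deviation.
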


\begin{proof}
Suppose that $\phi_t$ is the symplectic isotopy generated by the family of vector fields
$X_t$. Since $\phi_t$ is symplectic, $\beta_t:=i_{X_t}\omega$ is a family of closed one-forms.
By Lemma \ref{lem:techeasy}, we can write $\frac{\partial s_t}{\partial t}\vert_{t=0}$
as $P(X_0)$. By the previous lemma, this equals the image of $\beta_0$
under $r\circ \iota^*$. In particular, the cohomology class
of $\frac{\partial s_t}{\partial t}\vert_{t=0}$ coincides with the cohomology class
$(r\circ \iota^*)[\beta_0]$, hence lies in the image of $r:H^1(C) \to H^1_\mathfrak{F}(C)$.
\end{proof}

\begin{remark}\label{rem:cokernel}
Proposition \ref{prop:infmodsym} is an analogue of Proposition \ref{prop:trivialclass}, where we showed that if a family   $(s_t)_{t\in [0,1]}$ is  trivial under Hamiltonian equivalence then  the cohomology class of $\frac{\partial s_t}{\partial t}\vert_{t=0}$ is zero.
 
One can strengthen Proposition \ref{prop:infmodsym} by
observing that, by the same proof, every element 
in the image of 
the map $r \colon \Omega^1_{\mathrm{closed}}(C)\to \Omega^1_{\mathfrak{F}, \mathrm{closed}}(C)$
 is of the form $\frac{\partial s_t}{\partial t}\vert_{t=0}$,
where $(s_t)_{t\in [0,\epsilon)}$ arises
through the action of a symplectic isotopy on the zero-section.
Indeed, for every $\gamma\in \Omega^1_{\mathrm{closed}}(C)$
one considers
the symplectic isotopy generated by the vector field $(\omega^{\sharp})^{-1}(\pi^*\gamma)$.

In full analogy to  Remark \ref{TMHam}, this together with
Remark \ref{rem:Omega1cl} shows that  the formal tangent space at zero to $\Moduli^\Sym_U(C)$ is 
\begin{equation}\label{eq:coker}
\Omega^1_{\mathfrak{F}, \mathrm{closed}}/
r(\Omega^1_{\mathrm{closed}}(C))\cong H^1_\mathfrak{F}(C)/ r(H^1(C)),
\end{equation}
that is, the cokernel of
$ r: H^1(C) \to H^1_\mathfrak{F}(C)$.
The isomorphism is obtained by
 quotienting both terms on the left-hand side by $\Omega^1_{\mathfrak{F}, \mathrm{exact}}$
 and by using the following linear algebra statement for the denominator: if $f\colon V_1\to V_2$ is a linear map and $W_1,W_2$ are subspaces such that $f(W_1)=W_2$, then $f(V_1)/W_2=\mathrm{Im}([f]\colon V_1/W_1\to V_2/W_2$).
 
We note that if
$C$ is Lagrangian we have $H^1_\mathfrak{F}(C)=H^1(C)$ and $r$ is the identity, so its cokernel is trivial, as expected. 

Notice also, by the above and Remark \ref{TMHam}, that the formal tangent space at zero of $\Moduli^\Sym_U(C)$
is a quotient of the   formal tangent space to $\Moduli^\Ham_U(C)$,
and that they agree if{f} $ r: H^1(C) \to H^1_\mathfrak{F}(C)$ is the zero map. This happens for instance if $H^1(C)=0$, in which cases it is clear a priori that $\Moduli^\Sym_U(C)=\Moduli^\Ham_U(C)$, for all symplectic vector fields on $U$ are Hamiltonian. In Example 
\ref{ex:torus} below
we display an example in which $r$ is not the zero map.
\end{remark}

\subsection{The extended formal picture}\label{subsection: symplectomorphisms - extended picture}

We explain now how to interpret the equivalence relation $\sim_\Sym$ 
from the point of view of Oh and Park's $L_\infty[1]$-algebra structure on $\Gamma(\wedge E)[1]$. 

To this aim, we first need to briefly recall Voronov's derived bracket construction \cite{vor1,vor2}.

\begin{remark}[on Voronov's derived brackets]\label{rem:OPder}
\hspace{0cm}
\begin{enumerate}
\item Let $L$ be a graded Lie algebra, $\mathfrak{a}$ an abelian subalgebra and $P: L\to \mathfrak{a}$ a projection whose kernel is a Lie subalgebra.
Furthermore, suppose $X$ is a Maurer-Cartan element of $L$, i.e. $X\in L_1$ satisfying $[X,X]=0$, such that $P(X)=0$.
In \cite{vor1}, Voronov showed that the derived brackets
$$ \lambda_k(a_1\otimes \cdots \otimes a_k):= P([\cdots [[X,a_1],a_1]\cdots, a_k])$$
equip $\mathfrak{a}$ with the structure of an $L_\infty[1]$-algebra.
\item Observe that $X$ gives rise to a coboundary operator $-[X,\cdot]$ on $L$, which makes $L$ into a differential graded Lie algebra. This DGLA structure on $L$, the $L_\infty[1]$-algebra structure on $\mathfrak{a}$ described above, and additional structure maps $\lambda_{i}$ ($i\ge 1$) combine
 into an $L_\infty[1]$-algebra structure on $L[1]\oplus \mathfrak{a}$, see \cite{vor1,vor2}. The  
{additional structure maps}   take values in $\mathfrak{a}$ and are given by
$$ \lambda_{k+1}(l[1] \otimes a_1\otimes \cdots \otimes a_k) := P([\cdots [[l,a_1],a_2]\cdots,a_k]),$$
where $l\in L$ and $k\ge 0$, $a_1,\cdots ,a_k \in \mathfrak{a}$. Notice that
for $k=0$ we obtain
$\lambda_1(l[1])=P(l)$.
\end{enumerate}
\end{remark}

Since $\mathfrak{a}$ is a $L_\infty[1]$-subalgebra of $L[1]\oplus \mathfrak{a}$,
the inclusion $\beta \mapsto (0,\beta)$ identifies Maurer-Cartan elements of $\mathfrak{a}$ with those
Maurer-Cartan elements of $L[1]\oplus \mathfrak{a}$ which lie in $\{0\}\oplus \mathfrak{a}$.
We use this identification to obtain a new equivalence relation on $\mathsf{MC}(\mathfrak{a})$.
To this aim, we need to modify $L[1]\oplus \mathfrak{a}$ slighty to guarantee that the set of Maurer-Cartan elements in $\{0\}\oplus \mathfrak{a}$ is preserved
by the gauge-action:

\begin{lemma}\label{lemma: extended gauge}
Let $Z(X) \subset L$ be the graded Lie subalgebra of elements $\sigma$ which commute with $X$.
{
\begin{enumerate}
\item $Z(X)[1]\oplus \mathfrak{a} \subset L[1]\oplus \mathfrak{a}$ is an $L_\infty[1]$-subalgebra.
\item The gauge-equivalence in $Z(X)[1]\oplus \mathfrak{a}$ preserves the set of Maurer-Cartan elements in $\{0\}\oplus \mathfrak{a} \subset L[1]\oplus \mathfrak{a}$.
\end{enumerate}
}
\end{lemma}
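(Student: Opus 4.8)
The plan is to verify both assertions by unwinding the explicit formulas for the $L_\infty[1]$-brackets on $L[1]\oplus\mathfrak{a}$ recorded in Remark \ref{rem:OPder}, paying attention to where the centralizer condition $[\sigma,X]=0$ enters. For part (1), I would check that $Z(X)[1]\oplus\mathfrak{a}$ is closed under all brackets $\lambda_k$. The brackets decompose according to how many arguments come from the $L[1]$-summand. When all arguments lie in $\mathfrak{a}$, the output is the derived bracket $\lambda_k(a_1,\dots,a_k)=P([\cdots[[X,a_1],a_2]\cdots,a_k])\in\mathfrak{a}$, which stays in the subspace regardless of $Z(X)$. When exactly one argument is some $\sigma[1]$ with $\sigma\in Z(X)$, the output is again valued in $\mathfrak{a}$ by the formula $\lambda_{k+1}(\sigma[1],a_1,\dots,a_k)=P([\cdots[\sigma,a_1]\cdots,a_k])$, hence lands in the subspace. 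The only remaining possibility is the pure DGLA part, i.e. the differential $-[X,\cdot]$ and the bracket $[\cdot,\cdot]$ on $L[1]$: here I must confirm that $Z(X)$ is preserved. The differential sends $\sigma\in Z(X)$ to $-[X,\sigma]=0$ since $\sigma$ centralizes $X$, and the graded Lie bracket of two elements of $Z(X)$ lies in $Z(X)$ by the graded Jacobi identity (as $Z(X)$ is a graded Lie subalgebra, which is the hypothesis). Thus every structure map preserves $Z(X)[1]\oplus\mathfrak{a}$, establishing (1).

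For part (2), the point is to show that the gauge-vector fields of the subalgebra $Z(X)[1]\oplus\mathfrak{a}$, evaluated at a Maurer-Cartan element of the form $(0,\beta)$ with $\beta\in\mathsf{MC}(\mathfrak{a})$, remain tangent to $\{0\}\oplus\mathfrak{a}$. A gauge transformation is driven by a degree $-1$ element of the subalgebra, which decomposes as $(\sigma[1],\gamma)$ with $\sigma\in Z(X)$ of appropriate degree and $\gamma\in\mathfrak{a}$. The gauge vector field $V_{(\sigma[1],\gamma)}$ at the point $(0,\beta)$ is the Maurer-Cartan-type series $\sum_{k\ge0}\frac{1}{k!}\lambda_{k+1}((\sigma[1],\gamma),(0,\beta)^{\otimes k})$. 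I would compute its two components separately. The $\mathfrak{a}$-component is unproblematic: it is again valued in $\mathfrak{a}$ and moves $\beta$ within $\mathfrak{a}$. The decisive computation is the $L[1]$-component, which I expect to be exactly $-[X,\sigma]$ plus terms that are forced to vanish because all the $\beta$-insertions feed into brackets whose output lies in $\mathfrak{a}$, not $L[1]$. Here the centralizer condition does the essential work: the would-be $L[1]$-component reduces to $\lambda_1(\sigma[1])$'s $L[1]$-part, i.e. $-[X,\sigma]$, which vanishes precisely because $\sigma\in Z(X)$. Hence the gauge flow keeps the $L[1]$-coordinate identically zero and therefore preserves $\{0\}\oplus\mathfrak{a}$.

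The main obstacle I anticipate is bookkeeping the $L[1]$-versus-$\mathfrak{a}$ components of the gauge vector field correctly, since the $L_\infty[1]$-structure on $L[1]\oplus\mathfrak{a}$ mixes the DGLA brackets on $L[1]$ with the derived brackets into $\mathfrak{a}$, and I must isolate exactly which terms can produce a nonzero $L[1]$-component. The cleanest way to organize this is to observe that the projection onto the $L[1]$-summand of any structure map is nonzero only for the purely-$L[1]$ operations (the differential and the binary bracket), because every bracket with at least one $\mathfrak{a}$-argument or described by the derived-bracket formula is valued in $\mathfrak{a}$. Consequently, at the point $(0,\beta)$ the $L[1]$-component of the gauge field collapses to $-[X,\sigma]$ (the binary bracket with the $L[1]$-coordinate of $\beta$, which is $0$, drops out), and this vanishes on $Z(X)$. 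I would present this structural observation first, since it simultaneously makes part (1) transparent and reduces part (2) to the single identity $[X,\sigma]=0$.
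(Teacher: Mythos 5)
Your proposal is correct and takes essentially the same approach as the paper: part (1) reduces to $Z(X)$ being a graded Lie subalgebra (all structure maps involving an $\mathfrak{a}$-argument are $\mathfrak{a}$-valued by Remark \ref{rem:OPder}), and part (2) isolates the $L[1]$-component of the gauge flow, $\frac{d}{dt}l_t=[X,\sigma_t]+[l_t,\sigma_t]$, where the centralizer condition kills $[X,\sigma_t]$ and the remaining adjoint action fixes the origin. This is precisely the paper's argument, only written out in slightly more detail.
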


\begin{proof}
{
The first claim reduces to the fact that $Z(X)$ is a graded Lie subalgebra of $L$.}

{
Concerning the second claim,
we consider the effect of the gauge-action on 
first component of $L[1]\oplus \mathfrak{a}$. We find}
$$ \frac{d}{dt} l_t = [X,\sigma_t] + [l_t,\sigma_t],$$
where $\sigma_t$ is a family of elements in $L_{0}$ and $l_t$ in $L_1$. Now if we require $\sigma_t$ to lie in $Z(X)$, the term $[X,\sigma_t]$ is zero and
we recover the usual adjoint action of $L_0$ on $L_1$, for which the origin is clearly a fixed point. 
\end{proof}

\begin{remark}
The restriction to Maurer-Cartan elements in $\{0\}\oplus \mathfrak{a}$ of the gauge-equivalence of $Z(X)[1]\oplus \mathfrak{a}$ can be alternatively described as follows: 
It is straight-forward to check that if $L'$ is any graded Lie subalgebra of $L$ closed under $[X,\cdot]$, then $L'[1]\oplus \mathfrak{a}$
is closed w.r.t. all the multibrackets  of the
 $L_{\infty}[1]$-algebra $L[1]\oplus \mathfrak{a}$. We apply this to
 $L'=Z_0(X)$, the degree zero component of $Z(X)$,
 to obtain an $L_{\infty}[1]$-algebra
 $Z_0(X)[1]\oplus \mathfrak{a}$.  Notice that   $\mathsf{MC}(Z_0(X)[1]\oplus \mathfrak{a})= \mathsf{MC}(\{0\}\oplus\mathfrak{a})$, simply because $Z_0(X)[1]$ is concentrated in degree $-1$ while Maurer-Cartan elements have degree zero. Hence the gauge-equivalence of $Z_0(X)[1]\oplus \mathfrak{a}$ on its Maurer-Cartan elements agrees with the the restriction of the gauge-equivalence appearing in Lemma
\ref{lemma: extended gauge}.
\end{remark}

This result prompts us to give the following definition

\begin{definition}\label{def:extended gauge}
Two Maurer-Cartan elements $\beta_0$ and $\beta_1$ of $\mathfrak{a}$ are called {\bf extended gauge-equivalent}, written $\beta_0 \sim_{\ex-\gauge}\beta_1$, if there is a one-parameter family $\sigma_{t}$ of degree $0$ elements of $L$ which commute with $X$ and a one-parameter family $\beta_{t}$ of elements of $\mathfrak{a}_0$, agreeing with the given ones at $t=0$ 
  and $t=1$, such that
\begin{equation*}
\frac{\partial}{\partial t} \beta_{t}=P(\sigma_{t})+ P([\sigma_{t},\beta_{t}] )+\frac{1}{2!}P([[\sigma_t,\beta_{t}],\beta_{t}])+\frac{1}{3!} P([[[\sigma_t,\beta_{t}],\beta_{t}],\beta_{t}])+\dots
\end{equation*}
holds for all $t\in [0,1]$.
\end{definition}

We note that in the above definition we only allow gauge-equivalences generated by elements coming from the component $L[1]$, which seems
more restrictive than considering arbitrary gauge-equivalences in $Z(X)[1]\oplus \mathfrak{a}$. However, observe
that families of elements of the form $[X,\gamma_t]$, for $\gamma_t \in \mathfrak{a}_{-1}$, automatically
commute with $X$ and hence give rise to extended gauge-equivalences.
If we substitute such a family $[X,\gamma_t]$ for $\sigma_t$ in the above formula, we obtain
\begin{equation*}
\frac{\partial}{\partial t} \beta_{t}=P([X,\gamma_{t}])+ P([[X,\gamma_{t}],\beta_{t}] )+\frac{1}{2!}P([[[X,\gamma_t],\beta_{t}],\beta_{t}])+\frac{1}{3!} P([[[[X,\gamma_t],\beta_{t}],\beta_{t}],\beta_{t}])+\dots.
\end{equation*}
This expression coincides with the defining formula of an (ordinary) gauge-equivalence between
the Maurer-Cartan elements
$\beta_0$ and $\beta_1$,
see Definition \ref{def:eqMC} in Subsection \ref{subsection: Ham diffeos - equivalences}. Hence $\sim_{\ex-\gauge}$ from
Definition \ref{def:extended gauge} really coincides with the gauge-equivalence inherited from $Z(X)[1]\oplus \mathfrak{a}$ and
we furthermore see that ordinary gauge-equivalence implies extended gauge-equivalence.

\begin{remark}
One can obtain every $L_\infty[1]$-algebra from the derived bracket construction, see \cite[Example 4.1]{vor1} and \cite[Appendix A.3]{YaelZ} for details:
Let $W$ be a graded vector space and denote its graded symmetric coalgebra by $SW:=\oplus_{i\ge 0}S^iW$, where $S^iW$ can be described as the fixed point set of the $i$-fold tensor algebra $T^iW$ on $W$ under the even action of the symmetric group $\Sigma_i$.
The deconcatenation map $\Delta: TW \to TW\otimes TW$ given by
\begin{eqnarray*}
\Delta(x_1\otimes \cdots \otimes x_n) &:=& 1\otimes (x_1\otimes \cdots \otimes x_n) +\sum_{i=1}^{n-1} (x_1\otimes \cdots \otimes x_i)\otimes (x_{i+1}\otimes \cdots \otimes x_n) +\\ && (x_1\otimes \cdots \otimes x_n) \otimes 1
\end{eqnarray*}
restricts to $SW$ and defines a cocommutative coassociative coproduct there.
As essentially observed by Stasheff in \cite{Stasheff},
an  
$L_\infty[1]$-algebra structure on $W$ is the same as a degree $1$ coderivation $D$ of the coalgebra
$SW$ that annihilates $1\in \mathbb{R}\subset SW$ and squares to zero, i.e. an endomorphism $D$ of $SW$ that satisfies
$$  \Delta \circ D = (D\otimes \mathrm{id} + \mathrm{id}\otimes D) \circ \Delta, \quad D(1)=0, \quad \textrm{and} \quad D\circ D=0.$$
This means that an $L_\infty[1]$-algebra structure on $W$ corresponds to a Maurer-Cartan element
$D$ in the graded Lie algebra of coderivations $\mathrm{Coder}(SW)$, equipped with the commutator bracket.

One can reinterpret this construction in terms of the higher derived bracket construction as follows:
For $L$ we take $\mathrm{Coder}(SW)$ and as the abelian subalgebra we take $W$, which sits inside
$\mathrm{Coder}(SW)\cong \mathrm{Hom}(SW,W)$ as those homomorphisms which map $1$ to an element of $W$ and everything else to $0$.
The projection map $P: \mathrm{Coder}(SW)\cong \mathrm{Hom}(SW,W) \to W$ is evaluation at $1 \in \mathbb{R} = S^0W$ and the Maurer-Cartan element $X$ is the coderivation $D$.
The corresponding derived brackets just return the $L_\infty[1]$-algebra structure on $W$.

To see what extended gauge-equivalence means in this case, let $\sigma_t \in \mathrm{Coder}(SW)$
be a family of coderivation of degree $0$ which commutes with $D$.
The extended gauge-action on Maurer-Cartan elements $\beta_t$ of $W$ reads
$$ \frac{d}{dt}\beta_t = \mathrm{pr}_W(\sigma_t + \sigma_t(\beta_t)+\frac{1}{2}\sigma_t(\beta_t\otimes \beta_t)+\cdots),$$
where $\mathrm{pr}_W$ denotes the projection $SW\to W$.

Suppose we can integrate this family of coderivations to a family of automorphisms $\Phi_t$ of the coalgebra $SW$. 
By construction, $\Phi_t$ will commute with $D$ as well
and act on Maurer-Cartan elements of $W$ by 
$$ \mathrm{pr}_W(\Phi_t(1 + \beta + \frac{1}{2}\beta\otimes \beta + \frac{1}{3!}\beta\otimes \beta\otimes \beta +\cdots)).$$
This formula can by verified by checking that
differentiation yields the formula for the extended gauge-action from above.

In short, extended gauge-equivalence in the case at hand amounts to the action of those automorphisms of the $L_\infty[1]$-algebra structure $D$
which are connected to the identity.
\end{remark}

We now return to the equivalence relation $\sim_\Sym$ on the space of coisotropic deformations.
If one applies Voronov's derived bracket construction (see Remark \ref{rem:OPder})
to the data\begin{itemize}
\item $L=(\chi^\bullet(E)[1],[-,-])$,
\item $\mathfrak{a}=\Gamma(\wedge E)[1]$
\item $P: L\to \mathfrak{a}$ the projection as before,
\item $X=\Pi \in \chi^2(E)$ the Poisson bivector field corresponding to $\omega$,
\end{itemize}
 one
recovers Oh and Park's $L_\infty[1]$-algebra structure on $\Gamma(\wedge E)[1]$  from Subsection \ref{subsection: coisotropic submanifolds - L_infty}.
 
By Lemma \ref{lemma: extended gauge}, its Maurer-Cartan elements are endowed with a second equivalence relation, arising from
the degree $0$-elements
of $\chi^\bullet(E)[1]$ that commute with the Poisson bivector field.
These are exactly the symplectic vector fields.
Lemma \ref{lemma: extended gauge} prompts us to repeat the definition of gauge-equivalence from
Subsection \ref{subsection: Ham diffeos - equivalences}, with the Hamiltonian vector fields
$X_{\pi^*f_t}$ replaced with any family of symplectic vector fields. However, in order
to maintain the link to geometry, we restrict ourselves to symplectic vector fields on $E$ which are firbre-wise entrie.  

 \begin{definition}\label{definition: extended gauge}
 Let $(U,\Omega)$ be a local symplectic model for the coisotropic submanifold $C$.
 
 Two elements $s_0$ and $s_1$ of $\Defor_U(C)$ are {\bf extended gauge-equivalent},
 $s_0 \sim_{\ex-\gauge} s_1$,
 if there is a one-parameter family $s_t \in \Gamma(U)$, agreeing with $s_0$ and $s_1$ at
 the endpoints, and a family of symplectic, firbre-wise entrie vector fields $X_t$ on $U$ such that
 $$ \frac{\partial}{\partial t} (-s_t) = P(e^{[\cdot,-s_t]}X_t)$$
 holds for all $t\in [0,1]$.
\end{definition}

\begin{remark}
We denote the induced equivalence relation on $\Defor_U(C)$ by $\sim_{\ex-\gauge}$.
The proof of Proposition \ref{algequiv} goes through mutatis mutandis and we obtain:
\end{remark}

\begin{proposition}\label{algequiv_sym}
Elements $s_0$ and $s_1$ of $\Defor_U(C)$ are extended gauge-equivalent if and only if
 there is
  a one-parameter family $s_t\in \Gamma(U)$, agreeing with $s_0$ and $s_1$ at the endpoints, 
  and a one-parameter family $X_t$ of symplectic and firbre-wise entrie vector fields on $U$ such that
\begin{equation*}
\frac{\partial}{\partial t}s_t=\pr_{s_t}(X_t\vert_{\graph(s_t)})
\end{equation*}
holds for all $t\in [0,1]$.  \end{proposition}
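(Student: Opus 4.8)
The plan is to reduce the statement to the computation already carried out in the proof of Proposition \ref{algequiv}, replacing the Hamiltonian vector field $X_{\pi^*f_t}$ there by an arbitrary symplectic, fibre-wise entire vector field $X_t$ on $U$. By Definition \ref{definition: extended gauge}, the relation $s_0 \sim_{\ex-\gauge} s_1$ is equivalent to the existence of a family $s_t \in \Gamma(U)$ with the prescribed endpoints and a family of symplectic, fibre-wise entire vector fields $X_t$ satisfying $\frac{\partial}{\partial t}(-s_t) = P(e^{[\cdot,-s_t]}X_t)$. Thus the entire content of the proposition is to identify the algebraic right-hand side $P(e^{[\cdot,-s_t]}X_t)$ with the geometric expression $\pr_{s_t}(X_t\vert_{\graph(s_t)})$.

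First I would establish this identification through the same chain of four equalities used in the Hamiltonian case:
\begin{align*}
P(e^{[\cdot,-s_t]}X_t) &= P\big((\psi_{-s_t})_* X_t\big) = \pr_0\big((\psi_{-s_t})_*(X_t\vert_{\graph(s_t)})\big)\\
&= (\psi_{-s_t})_*\big(\pr_{s_t}(X_t\vert_{\graph(s_t)})\big) = \pr_{s_t}(X_t\vert_{\graph(s_t)}).
\end{align*}
The point to verify is that none of these steps uses the Hamiltonian nature of the vector field. The first equality is an application of \cite[Prop. 1.15]{OPPois}, whose only hypothesis is that the vector field be fibre-wise entire --- this is precisely the reason Definition \ref{definition: extended gauge} restricts to fibre-wise entire vector fields. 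The remaining three equalities are purely geometric: they rely only on the facts that $\psi_{-s_t}$ is the fibrewise translation carrying $\graph(s_t)$ onto the zero section $C$, that it preserves the fibres of $\pi$, and that its differential acts as the identity on fibre-wise constant vertical vectors. All of these hold verbatim for any $X_t$, so the identification goes through unchanged.

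Combining this identity with Definition \ref{definition: extended gauge} yields $-\frac{\partial}{\partial t}s_t = \pr_{s_t}(X_t\vert_{\graph(s_t)})$. To reach the sign convention in the statement I would reverse the sign of the generating vector field: the family $-X_t$ is again symplectic and fibre-wise entire (both classes are closed under negation) and, by linearity of $\pr_{s_t}$, substituting it gives exactly $\frac{\partial}{\partial t}s_t = \pr_{s_t}(X_t\vert_{\graph(s_t)})$. Since the central computation is an identity rather than an inequality, both implications of the ``if and only if'' follow simultaneously. I do not expect a genuine obstacle here; the only subtlety worth flagging is that the symplecticity of $X_t$ --- which is what makes this an \emph{extended} gauge-equivalence and corresponds to the condition $\sigma_t \in Z(X)$ of Lemma \ref{lemma: extended gauge} --- plays no role in the identification itself and enters only through the abstract derived-bracket framework of Remark \ref{rem:OPder}.
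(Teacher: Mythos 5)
Your proposal is correct and is exactly the paper's argument: the paper disposes of this proposition with the single remark that ``the proof of Proposition \ref{algequiv} goes through mutatis mutandis,'' and you have simply made that explicit by rechecking the four-step identity $P(e^{[\cdot,-s_t]}X_t)=\pr_{s_t}(X_t\vert_{\graph(s_t)})$, correctly isolating the fibre-wise entire hypothesis as the only input to \cite[Prop.~1.15]{OPPois} and handling the sign by passing to $-X_t$ (the analogue of the paper's reversal of the sign of $f_t$ in the Hamiltonian case).
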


\subsection{Symplectic equivalence $=$ extended gauge-equivalence} \label{subsection: symplectomorphisms - equivalences coincide}
Our aim is to compare the two equivalence relations $\sim_{\ex-\gauge}$ and $\sim_\Sym$ on $\Defor_U(E)$.

\begin{remark}
 The following two results are proved in parallel to Proposition \ref{prop:implications1} and Proposition \ref{prop:compacteq}.
 The key point is the following: if we are given a section $s$ of $U$ whose graph is coisotropic, and a closed $1$-form $\beta$ on $E$,
 the vector fields 
$(\omega^{\sharp})^{-1}(\pi^*s^*\beta)$ 
 and $(\omega^{\sharp})^{-1}(\beta)$
 have the same vertical projection onto $E\vert_{\graph(s)}$ along $T\graph(s)$.
 As in the proofs of Proposition \ref{prop:implications1} and Propositions \ref{prop:compacteq}, this fact allows
 one to replace
 any family of symplectic isotopies by a family of symplectic isotopies generated by firbre-wise entrie
 symplectic vector fields.
\end{remark}

\begin{proposition}\label{prop:implications1 - sym}
The following   implication holds between the  equivalence 
relations on $\Defor_U(C)$:
$$
\xymatrix{
 \textrm{symplectic equivalence } \sim_{\Sym}\ar@{=>}[d] \\
\textrm{extended gauge-equivalence } \sim_{\ex-\gauge}.}
$$
\end{proposition}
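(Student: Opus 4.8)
The plan is to transcribe implication~(2) in the proof of Proposition~\ref{prop:implications1}, with Hamiltonian vector fields replaced by symplectic ones and the auxiliary function $\pi^*f_t$ replaced by a pullback $1$-form. First I would unpack $s_0\sim_{\Sym}s_1$: there is a family of coisotropic sections $s_t$ and a symplectic isotopy $\phi_t$, generated by a family of vector fields $X_t$, with $\phi_t(\graph(s_0))=\graph(s_t)$ for all $t\in[0,1]$. Since each $\phi_t$ is symplectic we have $\mathcal{L}_{X_t}\omega=0$, so that $\beta_t:=i_{X_t}\omega$ is a closed $1$-form on $U$ and $X_t=(\omega^\sharp)^{-1}(\beta_t)$. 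Applying Lemma~\ref{lem:techeasy} to the family $X_t$ yields $\frac{\partial}{\partial t}s_t=\pr_{s_t}(X_t\vert_{\graph(s_t)})$.

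The second step is to trade $X_t$ for a symplectic, fibrewise entire vector field sharing its vertical projection along $\graph(s_t)$. I would set $\widetilde{\beta}_t:=\pi^*s_t^*\beta_t$ and $\widetilde{X}_t:=(\omega^\sharp)^{-1}(\widetilde{\beta}_t)=-\Pi^\sharp(\pi^*s_t^*\beta_t)$. As $\beta_t$ is closed, so is $s_t^*\beta_t$, hence $\widetilde{\beta}_t$; therefore $\widetilde{X}_t$ is again symplectic. Since $\widetilde{\beta}_t$ is pulled back from the base $C$ and $\Pi$ is fibrewise entire, $\widetilde{X}_t$ is fibrewise entire, by the same mechanism that makes $X_{\pi^*f_t}$ fibrewise entire in Subsection~\ref{subsection: Ham diffeos - equivalences}. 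Finally, the key observation recorded just before this proposition gives $\pr_{s_t}(\widetilde{X}_t\vert_{\graph(s_t)})=\pr_{s_t}(X_t\vert_{\graph(s_t)})$, because $X_t$ and $\widetilde{X}_t$ arise by applying $(\omega^\sharp)^{-1}$ to $\beta_t$ and to $\pi^*s_t^*\beta_t$ respectively.

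Combining these, $\frac{\partial}{\partial t}s_t=\pr_{s_t}(\widetilde{X}_t\vert_{\graph(s_t)})$ with $\widetilde{X}_t$ a family of symplectic, fibrewise entire vector fields on $U$, so Proposition~\ref{algequiv_sym} delivers $s_0\sim_{\ex-\gauge}s_1$.

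I expect the one delicate point to be the replacement step, i.e.\ the equality of vertical projections. The mechanism is that $\beta_t$ and $\widetilde{\beta}_t$ pull back to the same $1$-form $s_t^*\beta_t$ on $C$ along the section $s_t$, so their difference $\beta_t-\widetilde{\beta}_t$ is conormal to $\graph(s_t)$; since $\graph(s_t)$ is coisotropic, $(\omega^\sharp)^{-1}$ (equivalently $-\Pi^\sharp$) sends this conormal form into $T\graph(s_t)$, exactly as in the proof of Proposition~\ref{prop:trivialclass}. Hence $X_t-\widetilde{X}_t$ is tangent to $\graph(s_t)$ and invisible to $\pr_{s_t}$. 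Everything else is a routine transcription of the Hamiltonian argument.
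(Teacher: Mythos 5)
Your proposal is correct and follows essentially the same route as the paper: the paper proves this proposition ``in parallel'' to implication~(2) of Proposition~\ref{prop:implications1}, with the remark preceding the statement isolating exactly your key replacement fact, namely that $(\omega^{\sharp})^{-1}(\pi^*s_t^*\beta_t)$ and $(\omega^{\sharp})^{-1}(\beta_t)$ have the same vertical projection along $T\graph(s_t)$. Your justification of that fact --- the difference $\beta_t-\pi^*s_t^*\beta_t$ is conormal to the coisotropic graph, so $\Pi^{\sharp}$ sends it into $T\graph(s_t)$ --- is precisely the intended mechanism, as in the proof of Proposition~\ref{prop:trivialclass}.
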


Under the assumption that $C$ is compact, we can reverse the implications of Proposition \ref{prop:implications1 - sym}:

\begin{proposition}\label{prop:compacteq - sym}
Suppose $C$ is compact coisotropic submanifold. Then the following implication holds for the local symplectic model
of $C$:
$$
\xymatrix{
\textrm{extended gauge-equivalence } \sim_{\ex-\gauge} \ar@{=>}[d]\\
\textrm{symplectic   equivalence }\sim_{\Sym}.}$$
\end{proposition}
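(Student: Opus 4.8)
The plan is to mirror the proof of Proposition \ref{prop:compacteq}, replacing the base-Hamiltonian vector fields by arbitrary symplectic vector fields and invoking the compactness of $C$ through Lemma \ref{lem:techhard2}. First I would unwind the hypothesis: if $s_0 \sim_{\ex-\gauge} s_1$, then by Proposition \ref{algequiv_sym} there is a one-parameter family $s_t \in \Gamma(U)$ joining $s_0$ to $s_1$ and a family of symplectic, fibrewise entire vector fields $X_t$ on $U$ satisfying
\begin{equation*}
\frac{\partial}{\partial t} s_t = \pr_{s_t}\bigl(X_t\vert_{\graph(s_t)}\bigr), \qquad t \in [0,1].
\end{equation*}
This is exactly the differential equation appearing in Lemma \ref{lem:techhard2}, applied to the vector bundle $E$ over the compact base $C$.

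Next I would invoke Lemma \ref{lem:techhard2}, whose hypothesis is met precisely because $C$ is compact, to produce a flow $\phi_t$ of $X_t$ defined for all $t\in[0,1]$ and satisfying $\graph(s_t)=\phi_t(\graph(s_0))$. Since each $X_t$ is symplectic, its flow preserves $\Omega$, so $(\phi_t)_{t\in[0,1]}$ is an isotopy of local symplectomorphisms carrying $\graph(s_0)$ to $\graph(s_t)$. This is exactly the data required by Definition \ref{def:symeq}, and hence $s_0 \sim_\Sym s_1$. Everything in this step is a verbatim transcription of the Hamiltonian argument of Proposition \ref{prop:compacteq}.

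The one point demanding genuine care, and which I expect to be the main (and essentially only) obstacle, is the interplay between the cutoff built into Lemma \ref{lem:techhard2} and the symplectic condition. In that lemma one replaces $X_t$ by $\varphi X_t$ for a compactly supported function $\varphi$ that equals $1$ on a compact neighborhood $K$ containing all the graphs $\graph(s_t)$; this secures global existence of the flow but a priori destroys symplecticity away from $K$. However, using $d\Omega = 0$ and $d(i_{X_t}\Omega)=0$, one computes $\mathcal{L}_{\varphi X_t}\Omega = d\bigl(\varphi\, i_{X_t}\Omega\bigr) = d\varphi \wedge i_{X_t}\Omega$, which vanishes on the interior of $\{\varphi = 1\}$. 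Since the flow lines issuing from $\graph(s_0)$ remain inside $K$ by the very construction in Lemma \ref{lem:techhard2}, the flow of $\varphi X_t$ still preserves $\Omega$ on a neighborhood of those flow lines. Thus $\phi_t$ is a bona fide \emph{local} symplectomorphism along the graphs, and no global symplectomorphism needs to be extracted; this is precisely why Definition \ref{def:symeq} is phrased in terms of local symplectomorphisms.
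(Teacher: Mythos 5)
Your proof is correct and takes essentially the same route as the paper, which proves this proposition simply by running the argument of Proposition \ref{prop:compacteq} in parallel, with the Hamiltonian vector fields $X_{\pi^*f_t}$ replaced by the given family of fibrewise entire symplectic vector fields $X_t$ and with compactness of $C$ entering through Lemma \ref{lem:techhard2}. Your closing worry about the cutoff is correctly resolved but in fact superfluous: the conclusion of Lemma \ref{lem:techhard2} already concerns the flow of the original $X_t$ (the multiplication by $\varphi$ is purely internal to its proof), and the flow of a symplectic vector field is, wherever defined, an isotopy of local symplectomorphisms, which is all that Definition \ref{def:symeq} requires.
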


Combining the two previous propositions, we obtain the main result of this section:
 
\begin{theorem}\label{cor:3agree - sym} 
Let $C$ be a compact coisotropic submanifold with local symplectic model $(U,\Omega)$.
The equivalence relations on
$$ \Defor_U(C) := \{s\in \Gamma(E): s \textrm{ is coisotropic and } \graph(s) \subset U\}$$
given by
\begin{itemize}
\item symplectic equivalence $\sim_{\Sym}$ (Definition \ref{def:symeq})
 and
\item extended gauge-equivalence $\sim_{\ex-\gauge}$
(Definition \ref{definition: extended gauge}, see also Proposition \ref{algequiv_sym})
\end{itemize}
coincide.
\end{theorem}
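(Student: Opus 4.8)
The plan is to deduce the theorem directly by combining the two implications recorded in Proposition \ref{prop:implications1 - sym} and Proposition \ref{prop:compacteq - sym}, exactly as Theorem \ref{cor:3agree} was obtained in the Hamiltonian setting. One implication holds in general, the other requires the compactness of $C$; I treat the geometric characterizations of $\sim_{\Sym}$ (Definition \ref{def:symeq}) and of $\sim_{\ex-\gauge}$ (Proposition \ref{algequiv_sym}) as the two sides to be matched.

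For the implication $\sim_{\Sym}\Rightarrow\sim_{\ex-\gauge}$ I would start from a symplectic isotopy $\phi_t$ with $\graph(s_t)=\phi_t(\graph(s_0))$, generated by a family of symplectic vector fields $X_t$. Lemma \ref{lem:techeasy} immediately yields $\frac{\partial}{\partial t}s_t=\pr_{s_t}(X_t\vert_{\graph(s_t)})$. The only obstruction to reading this off as an extended gauge-equivalence is that $X_t$ need not be fibre-wise entire, so I would replace it by $\widetilde{X}_t:=(\omega^{\sharp})^{-1}(\pi^*s_t^*\beta_t)$, where $\beta_t:=i_{X_t}\omega$ is the closed $1$-form generating $\phi_t$. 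Since $\beta_t$ is closed, so is $s_t^*\beta_t$ on $C$ and hence $\pi^*s_t^*\beta_t$ on $E$, so $\widetilde{X}_t$ is again symplectic, and being associated to a fibre-wise constant form it is fibre-wise entire. By the key observation recorded before Proposition \ref{prop:implications1 - sym}, $\widetilde{X}_t$ and $X_t$ have the same vertical projection along $\graph(s_t)$, so the defining equation of Proposition \ref{algequiv_sym} holds for $\widetilde{X}_t$ and $s_0\sim_{\ex-\gauge}s_1$. This is the exact analog of the passage from $H_t$ to $\pi^*f_t=\pi^*(H_t\circ s_t)$ in Proposition \ref{prop:implications1}.

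For the converse $\sim_{\ex-\gauge}\Rightarrow\sim_{\Sym}$, which is where compactness enters, I would start from a family $s_t$ and a family of fibre-wise entire symplectic vector fields $X_t$ satisfying $\frac{\partial}{\partial t}s_t=\pr_{s_t}(X_t\vert_{\graph(s_t)})$, and invoke Lemma \ref{lem:techhard2}: since $C$ is compact, the flow $\phi_t$ of $X_t$ exists for all $t\in[0,1]$ and satisfies $\graph(s_t)=\phi_t(\graph(s_0))$. As each $X_t$ is symplectic, $\phi_t$ is an isotopy of local symplectomorphisms, so the data exhibit a symplectic equivalence in the sense of Definition \ref{def:symeq}. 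I expect the main obstacle to lie precisely here, in guaranteeing global existence of the flow on $[0,1]$: without compactness the flow of $X_t$ need not be defined (cf.\ the counterexample after Lemma \ref{lem:techhard2}), and it is the compactness hypothesis, fed into Lemma \ref{lem:techhard2}, that both produces the flow and certifies the equality of graphs. Thus the fibre-wise entire reduction of the second paragraph is what makes Proposition \ref{algequiv_sym} applicable, while compactness is what makes Lemma \ref{lem:techhard2} applicable; combining the two implications closes the circle and yields the asserted coincidence of $\sim_{\Sym}$ and $\sim_{\ex-\gauge}$.
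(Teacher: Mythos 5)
Your proposal is correct and follows essentially the same route as the paper: Theorem \ref{cor:3agree - sym} is obtained by combining Proposition \ref{prop:implications1 - sym} and Proposition \ref{prop:compacteq - sym}, which the paper proves in parallel to Propositions \ref{prop:implications1} and \ref{prop:compacteq} via Lemmata \ref{lem:techeasy} and \ref{lem:techhard2}. In particular, your replacement $X_t \rightsquigarrow (\omega^{\sharp})^{-1}(\pi^*s_t^*\beta_t)$, justified by the observation that both fields have the same vertical projection along $T\graph(s_t)$ (which uses coisotropicity of $\graph(s_t)$), is exactly the ``key point'' the paper records before Proposition \ref{prop:implications1 - sym}, and your use of compactness through Lemma \ref{lem:techhard2} matches the paper's argument for the converse implication.
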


As a consequence we have:

\begin{theorem} \label{theorem: symplectic equivalence}
Let $C$ be a compact coisotropic submanifold with local symplectic model $(U,\Omega)$.
The  bijection 
$$\Defor_U(C) \cong\mathsf{MC}_U(\Gamma(\wedge E)[1]) $$
descends to a bijection
$$ \Moduli_U^{\Sym}(C) :=\Defor_U(C)/\sim_\Sym \cong \mathsf{MC}_U(\Gamma(\wedge E)[1])/\sim_{\ex-\gauge}.$$
\end{theorem}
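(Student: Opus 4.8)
The plan is to deduce this theorem directly from Theorem \ref{cor:3agree - sym} together with the bijection of Remark \ref{rem:sminuss}, in exact parallel to the way Theorem \ref{theorem: Hamiltonian equivalence} is obtained from Theorem \ref{cor:3agree}. First I would recall that the assignment $s\mapsto -s$ provides a bijection between $\Defor_U(C)$ and $\mathsf{MC}_U(\Gamma(\wedge E)[1])$; this is the content of Remark \ref{rem:sminuss}, and it is the underlying set-theoretic isomorphism that I want to push down to the quotients. Since everything reduces to showing that this bijection intertwines the relevant equivalence relations, the theorem is essentially a formal corollary of what has already been established.

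Next I would observe that the relation $\sim_{\ex-\gauge}$ on $\Defor_U(C)$ is, by its very definition (Definition \ref{definition: extended gauge}, reformulated geometrically in Proposition \ref{algequiv_sym}), nothing but the transport under $s\mapsto -s$ of the extended gauge-equivalence on $\mathsf{MC}_U(\Gamma(\wedge E)[1])$: indeed the defining equation $\frac{\partial}{\partial t}(-s_t)=P(e^{[\cdot,-s_t]}X_t)$ is literally the extended gauge-action of Definition \ref{def:extended gauge} applied to the Maurer-Cartan elements $\beta_t=-s_t$. Consequently $s_0\sim_{\ex-\gauge}s_1$ in $\Defor_U(C)$ holds if and only if $-s_0$ and $-s_1$ are extended gauge-equivalent in $\mathsf{MC}_U(\Gamma(\wedge E)[1])$, so the bijection of Remark \ref{rem:sminuss} automatically descends to a bijection $\Defor_U(C)/\sim_{\ex-\gauge}\;\cong\;\mathsf{MC}_U(\Gamma(\wedge E)[1])/\sim_{\ex-\gauge}$.

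The final step is to invoke Theorem \ref{cor:3agree - sym}, which under the compactness hypothesis on $C$ identifies $\sim_{\Sym}$ with $\sim_{\ex-\gauge}$ on $\Defor_U(C)$. Substituting this identification into the left-hand quotient gives $\Moduli_U^{\Sym}(C)=\Defor_U(C)/\sim_{\Sym}=\Defor_U(C)/\sim_{\ex-\gauge}\cong\mathsf{MC}_U(\Gamma(\wedge E)[1])/\sim_{\ex-\gauge}$, which is exactly the assertion.

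I expect no genuine obstacle here, since all the analytic and geometric substance — in particular the existence of the symplectic flows for all $t\in[0,1]$ and the reduction of an arbitrary symplectic isotopy to one generated by fibrewise entire symplectic vector fields, which is where compactness of $C$ enters — has already been absorbed into Theorem \ref{cor:3agree - sym} through Propositions \ref{prop:implications1 - sym} and \ref{prop:compacteq - sym}. The one point that warrants care is purely bookkeeping: I must make sure that the relation $\sim_{\ex-\gauge}$ understood on the algebraic side $\mathsf{MC}_U(\Gamma(\wedge E)[1])$ is the one generated by \emph{fibrewise entire} symplectic vector fields, so that it matches verbatim the relation transported from $\Defor_U(C)$ and so that the series $P(e^{[\cdot,-s_t]}X_t)$ converges. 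This restriction is precisely the one built into Definition \ref{definition: extended gauge}, so the two relations coincide by construction and the descent of the bijection is immediate.
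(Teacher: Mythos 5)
Your proposal is correct and matches the paper's own (implicit) argument: the paper presents this theorem precisely as a formal consequence of Theorem \ref{cor:3agree - sym}, using the bijection $s\mapsto -s$ of Remark \ref{rem:sminuss} and the fact that $\sim_{\ex-\gauge}$ on $\Defor_U(C)$ is by Definition \ref{definition: extended gauge} the transport of the extended gauge-equivalence on $\mathsf{MC}_U(\Gamma(\wedge E)[1])$, in exact parallel to how Theorem \ref{theorem: Hamiltonian equivalence} follows from Theorem \ref{cor:3agree}. Your closing remark that the relation on the Maurer--Cartan side must be the one generated by fibrewise entire symplectic vector fields is the right bookkeeping point and is consistent with the paper's conventions.
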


\subsection{Comparison with Hamiltonian equivalence}\label{subsection: symplectomorphisms - comparison}

In this note we considered both 
Hamiltonian equivalence (Definition \ref{def:hamsymeq}) and symplectic equivalence (Definition \ref{def:symeq}) of coisotropic submanifolds. Here we summarize some results of Ruan \cite{Ruan} about the relation between these two kinds of equivalence.
Ruan considers a restricted class of coisotropic submanifolds, which he calls integral.

\begin{definition}
 A coisotropic submanifold $C$ is {\bf integral} if the leaves of its characteristic foliation $\mathfrak{F}$ are all compact and the set of leaves $S$ 
admits a smooth structure such that the natural map {$C\to S$}
 is a submersion. (In other words: $C\to S$ is a smooth fibre bundle with compact fibres.)
\end{definition}

\begin{remark}
\hspace{0cm}
\begin{enumerate}
 \item As Ruan noticed in \cite{Ruan}, being integral is not preserved under small deformations inside
 the space of coisotropic submanifolds. In the following,
 we restrict attention to the space of coisotropic sections which are integral, and denote them by
 $\Defor_U^{\mathrm{int}}(C)$.
 \item Recall that every fibre bundle $p: C\to S$ with compact fibres $S$ inherits a local system ${\bf H}$, given by the fibrewise cohomology,
 i.e. $$ {\bf H}_s := H^\bullet(p^{-1}(s),\mathbb{R}),$$
 equipped with the Gauss-Manin connection.
 The cohomology $H^\bullet(S,{\bf H})$ is the second sheet of the Leray-Serre spectral sequence associated to
 $p: C\to S$, which converges to the cohomology of $C$.
 We will focus on ${\bf H}^1_s:=H^1(p^{-1}(s),\mathbb{R})$.
 Observe that the differential $d_2$ of the second sheet gives a natural linear map
 $$ d_2: H^0(S,{\bf H}^1) \to H^2(S,{\bf H}^0).$$
Notice that the former group is  the space of global, flat sections of the vector bundle ${\bf H}^1$ over $S$.
 Since the fibres of $p$ are connected, the latter group is just $H^2(S,\mathbb{R})$. 
\end{enumerate}
\end{remark}

In \cite[Theorem 1]{Ruan} Ruan establishes the following result:

\begin{theorem}\label{thm:ruan}
 Let $C$ be an integral coisotropic submanifold.
 
 \begin{enumerate}
  \item There is an open embedding    $$ \Defor_U^{\mathrm{int}}(C) / \sim_\Ham \hookrightarrow H^0(S,{\bf H}^1).$$
  \item
  The image of the equivalence class of $C$ with respect to symplectic equivalence
  $\sim_{\Sym}$ under the map $\Defor_U^{\mathrm{int}}(C) \to \Defor_U^{\mathrm{int}}(C)/\sim_\Ham$ is given nearby $C$
  as the kernel of
  $d_2: H^0(S,{\bf H}^1)\to H^2(S,\RR)$.
 \end{enumerate}

\end{theorem}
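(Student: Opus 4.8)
The plan is to prove both statements by combining the leafwise (foliated) deformation theory developed above with the Leray--Serre spectral sequence of the fibration $p\colon C\to S$. Integrality guarantees that $p$ is a smooth fibre bundle with compact connected fibres $F_x=p^{-1}(x)$, so by Ehresmann's theorem $C\to S$ is locally trivial and the fibrewise cohomology ${\bf H}^1$ is a genuine smooth vector bundle over $S$ carrying the Gauss--Manin (flat) connection. The first thing I would record is the identification of leafwise cohomology with smooth sections of this bundle, $H^1_{\mathfrak{F}}(C)\cong \Gamma(S,{\bf H}^1)$; this follows from leafwise Hodge theory together with a partition of unity on the (Hausdorff) leaf space $S$, and it is compatible with the identification $\Omega^1_{\mathfrak{F}}(C)\cong\Gamma(E)$ used throughout. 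Inside $\Gamma(S,{\bf H}^1)$ the flat sections form precisely $H^0(S,{\bf H}^1)=E_2^{0,1}$.

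For part (1), I would introduce the \emph{period map} $\Phi\colon \Defor_U^{\mathrm{int}}(C)\to \Gamma(S,{\bf H}^1)$ sending a coisotropic section $s$ to the leafwise class $x\mapsto [\alpha_s|_{F_x}]$, where $\alpha_s\in\Omega^1_{\mathfrak{F}}(C)$ is the foliated one-form $\omega^{\sharp}(s)|_K$ attached to $s$ under $E\cong K^*$. The key nonlinear point is that $\Phi$ lands in the flat sections, i.e. $\Phi(s)\in H^0(S,{\bf H}^1)$. I would prove this by a flux/Stokes argument: for a Gauss--Manin-parallel family of $1$-cycles $c_x\subset F_x$ the period $\int_{c_x}\alpha_s$ is the symplectic flux swept out by $s$ over $c_x$, and closedness of $\Omega$ forces it to be locally constant in $x$; since the $F_x$ are compact, periods over a basis of $H_1(F_x)$ determine the class, giving flatness. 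Next, $\Phi$ descends to $\sim_\Ham$ and is injective on classes: two integral coisotropic sections with the same flat class are leafwise cohomologous, and a leafwise Moser argument produces a (base) Hamiltonian isotopy connecting them, so they are Hamiltonian equivalent by Theorem~\ref{theorem: Hamiltonian equivalence}. Finally, openness near the zero section follows because the linearization of $\Phi$ at $0$ identifies the tangent directions to $\Defor_U^{\mathrm{int}}(C)/\sim_\Ham$ with $H^0(S,{\bf H}^1)$ (compare Remark~\ref{rem:Omega1cl} and Remark~\ref{TMHam}), so $\Phi$ is a local homeomorphism onto an open neighbourhood of $0$.

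For part (2), I would use the infinitesimal analysis already carried out. By Proposition~\ref{prop:infmodsym} and Remark~\ref{rem:cokernel}, a symplectic isotopy moves $C$ to deformations whose leafwise class lies in $r(H^1(C))$, and every class in $r(H^1(C))$ is realised this way via the symplectic vector field $(\omega^{\sharp})^{-1}(\pi^*\gamma)$; hence, under the embedding $\Phi$ of part (1), the image of the $\sim_\Sym$-class of $C$ agrees near $0$ with $r(H^1(C))$. It then remains to locate $r(H^1(C))$ inside $H^0(S,{\bf H}^1)$, and here I would invoke the spectral sequence: the composition of $r\colon H^1(C)\to H^1_{\mathfrak{F}}(C)$ with evaluation into $H^0(S,{\bf H}^1)$ is exactly the edge homomorphism $H^1(C)\to E_\infty^{0,1}$ (restriction to fibres), whose image is $E_\infty^{0,1}=E_3^{0,1}=\ker\big(d_2\colon H^0(S,{\bf H}^1)\to H^2(S,\RR)\big)$, since the incoming differential $E_2^{-2,2}\to E_2^{0,1}$ and all higher outgoing differentials from $(0,1)$ vanish. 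Combining this with the previous paragraph yields that the $\sim_\Sym$-class of $C$ maps, nearby $C$, onto $\ker d_2$.

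The main obstacle I expect is the flatness statement together with the Moser step, i.e. controlling how the characteristic fibration and the bundle ${\bf H}^1$ behave under deformation. One must check that for $s$ small and integral the characteristic foliation of $\graph(s)$ is again a fibration over a manifold diffeomorphic to $S$, so that periods and parallel transport make sense uniformly, and that the leafwise Moser isotopy can be chosen to depend smoothly on the base and to be generated by functions of the form $\pi^*f$; compactness of the leaves is essential at both points. I note that this is Ruan's theorem~\cite[Theorem~1]{Ruan}; the argument above shows how it fits into, and can be recovered from, the foliated/$L_\infty$ framework of the present paper.
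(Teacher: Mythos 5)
First, note that the paper does not prove this statement at all: it is quoted verbatim from Ruan, with the citation \cite[Theorem 1]{Ruan}, so there is no in-paper proof to match your argument against. Judged on its own merits, your sketch has the right global architecture --- in particular the spectral-sequence bookkeeping in part (2) is correct: the incoming differentials into $E_r^{0,1}$ and the outgoing ones for $r\geq 3$ vanish for degree reasons, so $E_\infty^{0,1}=E_3^{0,1}=\ker\big(d_2\colon H^0(S,{\bf H}^1)\to H^2(S,\RR)\big)$, and the edge homomorphism $H^1(C)\to E_\infty^{0,1}$ (restriction to fibres) is surjective onto this kernel and factors through $r$. That correctly locates $r(H^1(C))$ inside $H^0(S,{\bf H}^1)$.

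However, there is a genuine gap at the foundation of part (1): your period map $\Phi(s)(x)=[\alpha_s|_{F_x}]$ is not well-defined as stated. For a \emph{finite} coisotropic deformation, $\alpha_s$ is not $d_\mathfrak{F}$-closed --- closedness is only the linearized condition, while the actual condition is the full Maurer--Cartan equation $\mathsf{MC}(-s)=0$ of Theorem \ref{thm:symplectic_case}, whose quadratic and higher terms obstruct $d_\mathfrak{F}s=0$; this nonlinearity is exactly what Remark \ref{rem:Omega1cl} emphasizes. So $\alpha_s|_{F_x}$ need not be a closed form on the original leaf $F_x$ and has no cohomology class; relatedly, your flux argument for flatness silently requires the swept surfaces inside $\graph(s)$ to be tangent to the \emph{deformed} characteristic foliation of $\graph(s)$, not the original one. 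Repairing this --- showing that for small integral $s$ the deformed characteristic foliation fibres over a leaf space identified with $S$, and defining the class there --- is not a peripheral check to be ``flagged'' at the end; it is the core of Ruan's proof. A second gap of the same nature occurs twice more: the claim that $\Phi$ is a local homeomorphism ``because the linearization identifies the tangent spaces'' is precisely the kind of inference that fails in this problem (not every $d_\mathfrak{F}$-class is realizable by an actual deformation, again Remark \ref{rem:Omega1cl}), so the openness of the image requires an explicit construction realizing each flat class near $0$ by an integral coisotropic graph; and in part (2) you pass from the infinitesimal statements (Proposition \ref{prop:infmodsym}, Remark \ref{rem:cokernel}) to the finite local statement about the $\sim_\Sym$-class without the Moser/flux integration step that justifies it. With those three pieces supplied, your outline would reconstruct Ruan's theorem; without them, the argument does not yet go through.
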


Below we reproduce an example from \cite{Ruan}:
\begin{example}
Consider the unit sphere $C=S^3$ in $\RR^4$, with the canonical symplectic form. The characteristic leaves of $S^3$ are circles, and $p\colon S^3\to S=S^2$
is the Hopf fibration. ${\bf H}^1$ is a trivial rank 1 vector bundle over $S^2$, so $H^0(S,{\bf H}^1)\cong \RR$, one generator being represented by a connection 1-form on the Hopf fibration. The map $H^0(S,{\bf H}^1)\to H^2(S,\RR)\cong \RR$ is an isomorphism, reflecting the fact that the connection is not flat.

Hence, by Theorem \ref{thm:ruan}, not all nearby integral coisotropic deformations of $S^3$ are related to $C$ by 
a symplectomorphism, for instance all spheres of radius $r$ for $r\neq 1$ are not. But those which are, are actually equivalent to $C$ by a Hamiltonian diffeomorphism. The latter statement follows, since  $H^1(C)=0$ implies that all symplectic vector fields in a tubular neighborhood of $C$ are Hamiltonian.
\end{example}

Another example is:

\begin{example}\label{ex:torus}
Consider the 3-torus $C=\TT^3$, which ``coordinates'' $\theta_1,\theta_2,\theta_3$, as the zero section of $(\TT^3 \times \RR,
d\theta_1\wedge d\theta_2+d\theta_3\wedge dx_4)$, where $x_4$ is the standard coordinate on $\RR$. The characteristic leaves are again circles, and $p\colon \TT^3\to S=\TT^2$
is the trivial fibration. Again, ${\bf H}^1$ is a trivial rank 1 vector bundle, so $H^0(S,{\bf H}^1)\cong \RR$, one generator being represented by
$d\theta_3$. The map $H^0(S,{\bf H}^1)\cong \RR\to H^2(S,\RR)\cong \RR$ is the zero map, reflecting the fact that  $d\theta_3$ is a closed 1-form.

We conclude that all nearby integral coisotropic deformations of $C$ are related to $C$ by 
a symplectomorphism, but not all of them are related to $C$ by a Hamiltonian diffeomorphism. For instance, the 3-tori given by $\{x_4=c\}$ for constants $c\neq 0$ are not.
Notice that the latter statement is in accordance with the fact that 
 $\Moduli^\Sym_U(C)\neq \Moduli^\Ham_U(C)$, which is a consequence of 
 Remark \ref{rem:cokernel} since the map
  $ r: H^1(C) \to H^1_\mathfrak{F}(C)$ has one-dimensional image.

 \end{example}

\section{The transversally integrable case}\label{section: transversal}

In this section we consider coisotropic submanifolds $C$ that admit a foliation
that is complementary to the characteristic foliation:

\begin{definition}
 A coisotropic submanifold $C$ of $(M,\omega)$ is called {\bf transversally integrable} if
 the kernel $K$ of the pre-symplectic structure $\omega_C$ admits a complementary subbundle $G$
 which is involutive.
\end{definition}

\begin{remark}
A transversally integrable coisotropic submanifold $C$ comes equipped with two foliations:
the characteristic foliation $\mathfrak{F}$, given by the maximal leaves of $K$,
and another foliation, given by the maximal leaves of $G$.
Since $K$ is the kernel of the pre-symplectic structure on $C$,
the leaves of $G \cong TC/K$ inherit a symplectic structure.
\end{remark}

The assumption of transversal integrability leads to many simplifications. We recover a result by Oh and Park \cite{OP} that says that the $L_\infty[1]$-algebra associated to a transversally integrable $C$ is a differential graded Lie algebra (Proposition \ref{ref:propDGLAOP}). Moreover, we give
a formula for the coisotropic section generated by moving the zero section by a basic Hamiltonian flow (Proposition \ref{prop:section}).

\subsection{Oh and Park's $L_\infty[1]$-algebra} \label{subsection: transversal - L_infty}

Let $C$ be a coisotropic submanifold and $(U,\Omega)$ be the local symplectic model
of $C$ as in Section \ref{section: pre-symplectic geometry}. As seen there, the normal model is a neighborhood of the zero section in a vector bundle $E \to C$, so it
comes equipped with a surjective submersion $\pi \colon U\to C$.

The following proposition was already proven in \cite[Equation (9.17)]{OP} (see also Theorem 9.3 there). We provide an alternative proof here.

\begin{proposition}\label{ref:propDGLAOP}
Let  $C$ be a coisotropic submanifold, and assume there exists an involutive complement $G$ to $K=\ker(\omega_C)$. 
Then the $L_{\infty}[1]$-algebra structure on $\Gamma(\wedge E)[1]$, $E=K^*$,
associated to $C$ as in Subsection \ref{subsection: coisotropic submanifolds - L_infty}
corresponds\footnote{That is, the $L_{\infty}$-algebra obtained after applying the degree shift operator $[-1]$ is a differential graded Lie algebra, i.e. the structure maps $\lambda_k$ vanish for $k>2$.} to a differential graded Lie algebra.
\end{proposition}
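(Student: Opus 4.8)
The plan is to reduce the statement to the vanishing of the iterated derived brackets and then establish this by a weight argument, once the Poisson bivector field $\Pi$ associated to $\Omega$ has been put into a normal form adapted to the two foliations. By the derived bracket formula (Remark \ref{rem:OPder} with $X=\Pi$), for homogeneous elements $a_i\in\Gamma(\wedge^{j_i}E)$ one has
$$\lambda_k(a_1,\dots,a_k)=P\big([\cdots[[\Pi,a_1],a_2]\cdots,a_k]\big),$$
so by multilinearity it suffices to show that the right-hand side vanishes whenever $k\ge 3$. First I would exploit transversal integrability to choose local coordinates $(x^i,y^a)$ on $C$ adapted to \emph{both} foliations: since $K$ and $G$ are complementary and both involutive, the obstruction to straightening them simultaneously (the curvature of $G$ regarded as a flat connection transverse to $\mathfrak{F}$) vanishes, so one may take $K=\mathrm{span}\{\partial_{y^a}\}$ and $G=\mathrm{span}\{\partial_{x^i}\}$. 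Writing $\omega_C=\tfrac12\omega_{ij}\,dx^i\wedge dx^j$ (its kernel being $K$), the condition $d\omega_C=0$ forces $\partial_{y^a}\omega_{ij}=0$, i.e. $\omega_{ij}=\omega_{ij}(x)$.

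Next I would compute the local symplectic model explicitly in these coordinates. Letting $p_a$ be the fibre coordinates on $E=K^*$ dual to $\partial_{y^a}$, the inclusion $j\colon K^*\hookrightarrow T^*C$ determined by $G$ pulls the tautological one-form back to $\sum_a p_a\,dy^a$, so that $\Omega=\pi^*\omega_C+j^*\omega_{T^*C}=\tfrac12\omega_{ij}(x)\,dx^i\wedge dx^j-\sum_a dp_a\wedge dy^a$. The crucial point is that $\Omega$, and hence its inverse bivector
$$\Pi=\tfrac12\omega^{ij}(x)\,\partial_{x^i}\wedge\partial_{x^j}-\sum_a\partial_{p_a}\wedge\partial_{y^a}=:\Pi_G+\Pi_0,$$
has coefficients that are independent of the fibre coordinate $p$.

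The heart of the argument is then a grading by fibre weight: assign weight $+1$ to $p_a$, hence $-1$ to $\partial_{p_a}$, and weight $0$ to $x^i,y^a,\partial_{x^i},\partial_{y^a}$. Under this grading the Schouten--Nijenhuis bracket is weight-additive; $\Pi_G$ is homogeneous of weight $0$ while $\Pi_0$ is homogeneous of weight $-1$, and each $a_i\in\Gamma(\wedge^{j_i}E)$ is homogeneous of weight $-j_i$. Consequently the iterated bracket $B:=[\cdots[\Pi,a_1]\cdots,a_k]$ splits into two homogeneous pieces, of weights $-\sum_i j_i$ (from $\Pi_G$) and $-1-\sum_i j_i$ (from $\Pi_0$). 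On the other hand $P(B)$ retains only the purely vertical part of $B|_{p=0}$, which lies in $\Gamma(\wedge^{\deg B}E)$ with $\deg B=2+\sum_i j_i-k$; being vertical with $p$-independent coefficients, this part has weight $-\deg B=k-2-\sum_i j_i$. Matching this required weight against the two weights actually present in $B$ yields $k=2$ (from $\Pi_G$) or $k=1$ (from $\Pi_0$), with no solution for $k\ge 3$. Hence $P(B)=0$ whenever $k\ge 3$, which is precisely the vanishing of $\lambda_k$, and $\Gamma(\wedge E)[1]$ is therefore a differential graded Lie algebra.

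I expect the main obstacle to be the second step, namely justifying the normal form $\Pi=\Pi_G+\Pi_0$ with $p$-independent coefficients. This rests on two ingredients that deserve care: the simultaneous straightening of the complementary involutive distributions $K$ and $G$ (a Frobenius-type argument whose feasibility is exactly the hypothesis that $G$ be involutive), and the identity $\omega_{ij}=\omega_{ij}(x)$ extracted from closedness of $\omega_C$. Once $\Pi$ is fibrewise constant and decomposed into its weight-$0$ and weight-$(-1)$ parts, the weight bookkeeping is routine; as a consistency check one recovers $\lambda_1(s)=P([\Pi,s])=d_{\mathfrak{F}}s$ (the contribution coming entirely from $\Pi_0$) and a generically nonzero $\lambda_2(s,s)=P([[\Pi_G,s],s])$, matching the expected foliated differential and Lie bracket.
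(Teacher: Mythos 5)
Your proof is correct, and its geometric core coincides with the paper's: both arguments choose local coordinates simultaneously adapted to the two complementary involutive distributions $K$ and $G$ (the Frobenius-type local product statement you flag as the main obstacle is standard, and the paper invokes it without further comment), and both then read off — as in Equation \eqref{eq:Omegacoord} — that in such charts $\Omega$ is the sum of $\pi^*\omega_C$ and the standard pairing between the characteristic and fibre coordinates, so that $\Pi$ has coefficients independent of the fibre coordinates. Where you genuinely depart from the paper is in how the vanishing of $\lambda_k$ for $k\ge 3$ is extracted from this normal form: the paper uses the fact that the multibrackets are multiderivations, reduces via a degree count to tuples built from functions $\pi^*f$ and constant vertical vector fields, and then kills every bracket containing such a vertical field using the invariance of $\Pi$ and $\pi^*f$ under the translations $\partial/\partial p_i$; you instead introduce the Euler (fibre-weight) grading, under which the Schouten--Nijenhuis bracket is additive, split $\Pi=\Pi_G+\Pi_0$ into its weight-$0$ and weight-$(-1)$ parts, and match these against the unique weight, namely $-\deg B$, that can survive the projection $P$. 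Your bookkeeping handles arbitrary homogeneous multivector arguments at once, with no reduction to generating tuples, and identifies for free that $\lambda_1$ comes entirely from $\Pi_0$ and $\lambda_2$ from $\Pi_G$, matching the paper's subsequent remark that $\lambda_1(f)=P(X_{\pi^*f})$ and $\lambda_2(f,g)=-\{f,g\}^G$. Two minor observations: your step $\omega_{ij}=\omega_{ij}(x)$, while correct (it is the statement that a pre-symplectic form is locally basic along its characteristic foliation), is not actually needed, since $y$ and $\partial_y$ carry weight $0$ and only the manifest $p$-independence of the coefficients of $\Omega$ enters; and the decomposition $\Pi=\Pi_G+\Pi_0$ is chart-dependent, which is harmless because the vanishing of the multidifferential operators $\lambda_k$ is a local statement.
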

 
\begin{proof}

The structure maps $\lambda_r$ of the $L_\infty[1]$-algebra
from Subsection \ref{subsection: coisotropic submanifolds - L_infty} are derivations in each argument. Consequently
they can be evaluated locally. Moreover, the derivation property and a degree-count using the fact that $\Pi$ is a bivector field show that it suffices to evaluate them on tuples of the form
$$(f,g,s_1,\dots,s_{r-2}), \quad (f,s_1,\dots,s_{r-1}) \quad  \textrm{and}  \quad(s_1,\dots,s_r)$$
with $f, g \in \mathcal{C}^\infty(C)$ and $s_i \in \Gamma(E)$, seen as vertical vector fields on $E$, in order to determine them completely.
   
We now compute the multibracket $\lambda_k$
of Oh and Park's $L_\infty[1]$-algebra structure on $\Gamma(\wedge E)[1]$
in local coordinates. 
As we already noticed, the leaves of the
involutive subbundle $G$ complementary to $K$
   are symplectic.
Choose coordinates $q_1,\dots,q_{n-k},y_1,\dots,y_{2k}$
on 
$C$ adapted to the foliations integrating $K$ and $G$, respectively. That is, $K$ is spanned by the $\pd{q}$'s and
$G$ is spanned by the $\pd{y}$'s. 
Add conjugate coordinates $p_1,\dots,p_{n-k},u_1,\dots,u_{2k}$ to obtain a coordinate system on $T^*C$.
The subbundle $G^{\circ}\subset T^*C$ is locally given by $\{u_1=\dots=u_{2k}=0\}$. Hence the symplectic form on $E=K^*\cong G^{\circ}$ (see Section \ref{section: pre-symplectic geometry}) reads 
\begin{equation}\label{eq:Omegacoord}
\Omega=\sum_{i=1}^{n-k} dq_i\wedge dp_i+\pi^*\omega_C.
\end{equation}
 Notice that, in coordinates, $\omega_C$ has the form $\sum h_{jl}dy_j\wedge dy_l$ for some functions $h_{jl}$ on $C$.
Notice further that $\Omega$ (and therefore the  Poisson bivector field $\Pi$ obtained by inverting $\Omega$) are invariant under all of the vertical vector fields  
 $\frac{\partial}{\partial p_1},\dots,\frac{\partial}{\partial p_{n-k}}$.
 The structure maps $\lambda_r$ are determined by their evaluation on tuples of the form
 $$ (f, g , \frac{\partial}{\partial q_{i_1}}, \dots,  \frac{\partial}{\partial q_{i_{r-2}}}), \quad
 (f , \frac{\partial}{\partial q_{i_1}}, \dots,  \frac{\partial}{\partial q_{i_{r-1}}}) \quad \textrm{and}
 \quad (\frac{\partial}{\partial q_{i_1}}, \dots,  \frac{\partial}{\partial q_{i_{r}}}).$$

Consider the term on the right-hand side of Equation \eqref{eq:multibrackets} in Subsection \ref{subsection: coisotropic submanifolds - L_infty} before applying the projection $P$, that is,
\begin{equation*}\label{eq:pis1}
[[\dots[\Pi,-],-]\dots],-].
\end{equation*}
As we argued above, it suffices to evaluate this expression on tuples consisting of functions on $C$ and vertical vector fields
$\frac{\partial}{\partial q_{i}}$.
Since $\pi^*f$ and $\Omega$ is invariant under any of the vertical vector fields, $\frac{\partial}{\partial q_{i}}$,
the structure map $\lambda_r$ vanish whenever we evaluate it on a tuple that contains a
$\frac{\partial}{\partial q_{i}}$. Hence, only $\lambda_1$ and $\lambda_2$ can be non-zero.  
\end{proof}
   
\begin{remark}
\hspace{0cm}
\begin{enumerate}
\item
 The non-trivial structure maps of the differential graded Lie algebra associated to a transversally integrable
 coisotropic submanifold are given by
 $$ \lambda_1(f) =  P(X_{\pi^*f}) \quad \textrm{and} \quad \lambda_2(f,g) = - \{f,g\}^G,$$
the fact that $\lambda_1$ and $\lambda_2$ annihilate the coordinate vector fields $\pd{q}$ associated to adapted coordinates on $C$,
 and the derivation rule. Here, $\{\cdot,\cdot\}^G$ denotes the leafwise Poisson structure
 associated to the symplectic foliation integrating $G$.\footnote{The additional minus sign in $\lambda_2$ is a consequence of the fact that we work in $\Gamma(\wedge E)[1]$, i.e. that we shift all the degrees down be one. In particular, functions have degree $-1$ after this shift.}
\item {The $L_\infty[1]$-algebra we associated to a coisotropic submanifold $C$ depends on the choice
of a tubular neighborhood $U$, as well as on the choice of a subbundle $G$ complementary to the kernel $K$ of the pre-symplectic structure.
Theorem 4.3 of \cite{CS} asserts that different choices of these data lead to {\em isomorphic} $L_\infty[1]$-algebras. Consequently
Proposition \ref{ref:propDGLAOP} guarantees that in case an involutive transversal distribution exists, every $L_\infty[1]$-algebra
associated to $C$ is isomorphic to a differential graded Lie algebra.}
\end{enumerate}
\end{remark}

\subsection{Hamiltonian equivalences}\label{subsection: transversal - Hamiltonian}

We want to be more explicit about lifting constructions from a coisotropic submanifold $C$
to its local symplectic model $(U,\Omega)$. To this end, the concept of a partial Ehresmann connection will be of great
importance.

\begin{definition}
Let $K$ be an involutive distribution on $C$. Suppose $\pi: U\to C$ is a surjective submersion.
A {\bf partial Ehresmann connection} on $U$ is a choice of a complementary subbundle $G$ to $K$
and a subbundle $G^\sharp$ of $TU$ such that
the differential of $d_x\pi$ at $x\in U$ maps $G^{\sharp}_x$ isomorphically onto $G_{\pi(x)}$.
\end{definition}

\begin{remark}
We notice that the last condition implies that $G^\sharp$ is complementary to $(d\pi)^{-1}(K)$.
\end{remark}

\begin{lemma}
Let $C$ be a coisotropic submanifold with local symplectic model $(U,\Omega)$.
Suppose $G$ is the subbundle complementary to the kernel $K$ of the pre-symplectic form $\omega_C$
which was chosen in the construction of $(U,\Omega)$.
\begin{enumerate}
 \item The subbundles $(d\pi)^{-1}G$ and $V=\ker(d\pi)$ of $TU$ are symplectically
 orthogonal to each other.
\item The bundle $$G^\sharp := ((d\pi)^{-1}(K))^\perp$$
defines a partial Ehresmann connection on $U$.
\end{enumerate}
\end{lemma}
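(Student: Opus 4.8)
\emph{The plan.} The whole argument rests on the explicit shape $\Omega = \pi^*\omega_C + j^*\omega_{T^*C}$ of the symplectic form on the local model, after which part (2) is formal symplectic linear algebra built on part (1). For part (1), fix $x\in U$, a vertical vector $v\in V_x=\ker(d_x\pi)$ and a vector $w\in((d\pi)^{-1}G)_x$, so $d\pi(w)\in G_{\pi(x)}$, and evaluate $\Omega(w,v)$ on the two summands separately. The term $\pi^*\omega_C(w,v)=\omega_C(d\pi(w),d\pi(v))$ vanishes because $d\pi(v)=0$. For the second term, recall that $j\colon K^*=G^\circ\hookrightarrow T^*C$ is the fibrewise-linear inclusion as the annihilator of $G$, and let $\rho\colon T^*C\to C$ be the bundle projection, so $\rho\circ j=\pi$. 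Pairing the canonical form with a $\rho$-vertical vector recovers the tautological evaluation: if $V^\flat$ is $\rho$-vertical, identified with the corresponding covector, then $\omega_{T^*C}(X,V^\flat)=\pm\langle V^\flat, d\rho(X)\rangle$. Now $j_*v$ is $\rho$-vertical with covector in $G^\circ$, while $d\rho(j_*w)=d\pi(w)\in G$; hence $(j^*\omega_{T^*C})(w,v)=\pm\langle (j_*v)^\flat, d\pi(w)\rangle=0$. (Alternatively, in the adapted coordinates of \eqref{eq:Omegacoord} one has $V=\langle\partial_{p_i}\rangle$ and $(d\pi)^{-1}G=\langle\partial_{y_j},\partial_{p_i}\rangle$, and $\Omega(\partial_{y_j},\partial_{p_i})=\Omega(\partial_{p_{i'}},\partial_{p_i})=0$ is immediate.) This gives $(d\pi)^{-1}G\subseteq V^\perp$, and I would upgrade it to equality by a rank count: with $r=\mathrm{rank}\,K=\mathrm{rank}\,E$ and $c=\dim C$, so $\dim U=c+r$, one has $\mathrm{rank}\,(d\pi)^{-1}G=(c-r)+r=c=(c+r)-r=\mathrm{rank}\,V^\perp$, hence $V^\perp=(d\pi)^{-1}G$.

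\emph{Part (2)} is then purely formal. Since $V=\ker(d\pi)\subseteq(d\pi)^{-1}K$, taking symplectic orthogonals and using part (1) yields $G^\sharp=((d\pi)^{-1}K)^\perp\subseteq V^\perp=(d\pi)^{-1}G$, so $d\pi(G^\sharp)\subseteq G$. For injectivity of $d\pi$ on $G^\sharp$ it suffices to check $G^\sharp\cap V=0$. Using $A^\perp\cap B^\perp=(A+B)^\perp$, the identity $V=((d\pi)^{-1}G)^\perp$ from part (1), and $(d\pi)^{-1}K+(d\pi)^{-1}G=(d\pi)^{-1}(K+G)=(d\pi)^{-1}(TC)=TU$ (valid because $d\pi$ is a surjective submersion), one obtains
$$G^\sharp\cap V=((d\pi)^{-1}K)^\perp\cap((d\pi)^{-1}G)^\perp=\big((d\pi)^{-1}K+(d\pi)^{-1}G\big)^\perp=(TU)^\perp=0,$$
the last equality by non-degeneracy of $\Omega$. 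Finally $\mathrm{rank}\,G^\sharp=\dim U-\mathrm{rank}\,(d\pi)^{-1}K=(c+r)-2r=c-r=\mathrm{rank}\,G$, so the injective map $d_x\pi\colon G^\sharp_x\to G_{\pi(x)}$ is an isomorphism, which is exactly the defining property of a partial Ehresmann connection.

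\emph{Main obstacle.} The only genuine content is part (1): once the symplectic orthogonality and the rank equality $V^\perp=(d\pi)^{-1}G$ are in hand, part (2) is a mechanical manipulation of orthogonal complements. The delicate point is handling $j^*\omega_{T^*C}$ \emph{away} from the zero section, i.e. making sure the pairing formula for the canonical form is applied at an arbitrary $\xi\in G^\circ$ and not only along $C$; this is precisely why I would either invoke the tautological-evaluation formula invariantly or fall back on the coordinate expression \eqref{eq:Omegacoord}.
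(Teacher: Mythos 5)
Your proof is correct and follows essentially the same route as the paper's: for (1) the orthogonality of the two summands of $\Omega=\pi^*\omega_C+j^*\omega_{T^*C}$ plus a rank count, and for (2) the inclusion $G^\sharp\subseteq(\ker d\pi)^\perp=(d\pi)^{-1}G$ followed by a dimension count — you merely make explicit two steps the paper leaves terse, namely why the $j^*\omega_{T^*C}$-term vanishes (via the tautological pairing of the canonical form with a $\rho$-vertical vector) and why $d\pi\vert_{G^\sharp}$ is injective (via $G^\sharp\cap V=\bigl((d\pi)^{-1}K+(d\pi)^{-1}G\bigr)^\perp=0$). One small caveat: your parenthetical coordinate alternative via \eqref{eq:Omegacoord} is available only when $G$ is involutive, since those adapted coordinates are constructed in the transversally integrable case, but your invariant argument does not rely on it.
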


\begin{proof}
We take $\xi\in \ker(d_x\pi)$ and $v\in (d_x\pi)^{-1}(G)$.
Plugging the two vectors into the symplectic form $\omega$ yields
$$ \Omega_x(\xi,v) = \omega_C(d\pi(\xi),d\pi(v)) + \omega_{T^*C}(d_xj(\xi),d_xj(v)) = 0.$$
Since the ranks of the two subbundles add up to the rank of $TU$, the first claim follows.

Concerning (2), the inclusion $\ker(d_x\pi) \subset (d_x\pi)^{-1}(K)$
implies
$$ ((d_x\pi)^{-1}(K))^\perp \subset (\ker(d_x\pi))^\perp = (d_x\pi)^{-1}(G),$$
i.e. $G^\sharp$ maps indeed onto $G$ under $d\pi$.
To check that the map is an isomorphism, it suffices to check that the dimensions match, which is straight-forward.
\end{proof}

\begin{remark}
The partial Ehresmann connection $G^\sharp$ was first considered in \cite{OP}, see Equation (6.3) there.
Observe that $G^\sharp$ is usually not linear, i.e. not compatible with the linear structure
on $E \supset U$.

A partial Ehresmann connection $G^\sharp$ is called flat if it is an involutive subbundle of $TU$.
This condition can be restated as follows: $G^\sharp$ gives rise to a map
$$ \Gamma(G) \to \mathcal{X}(U), \quad X\mapsto X^{\mathrm{hor}},$$
where $X^{\mathrm{hor}}$ is uniquely determined by the condition
$d_x\pi(X^\mathrm{hor}\vert_x)=X_{\pi(x)}$ for all $x\in U$.
Flatness of $G^\sharp$ is equivalent to the requirements that $G$ is involutive and that the map
$X\mapsto X^\mathrm{hor}$ is compatible with the Lie bracket of vector fields.
\end{remark}

\begin{proposition}\label{lem:flatlift}
Let $C$ be a coisotropic submanifold that is transversally integrable, with $G$ an involutive transversal distribution. Let $(U,\Omega)$ be the corresponding local symplectic model.

\begin{enumerate}
 \item The partial connection $G^{\sharp}$ on $U\subset E$ is linear
and flat.
\item For all $f\in \cC^{\infty}(C)$, we have
$$X_{\pi^*f}=P(X_{\pi^*f})+(X_f^G)^{\mathrm{hor}}$$
where: 
\begin{itemize}
 \item[(i)] $P(X_{\pi^*f}) \in \Gamma(E)$ is seen as a vertical vector field on $U\subset E$, constant along the fibres,
 \item[(ii)] $X_f^G$ denotes the leafwise Hamiltonian vector field of $f$ with respect to the symplectic foliation integrating $G$ and
 \item[(iii)] $(X_f^G)^{\mathrm{hor}}$ denotes the horizontal lift of $X_f^G$ with respect to the partial Ehresmann connection $G^{\sharp}$.
\end{itemize}

\end{enumerate}

\end{proposition}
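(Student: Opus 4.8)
The plan is to prove both claims by working in the adapted coordinates introduced in the proof of Proposition \ref{ref:propDGLAOP}. Recall that there we chose coordinates $q_1,\dots,q_{n-k}$ spanning $K$ and $y_1,\dots,y_{2k}$ spanning $G$ on $C$, with conjugate fibre coordinates $p_1,\dots,p_{n-k}$ on $E=K^*$, so that by \eqref{eq:Omegacoord} the symplectic form reads $\Omega=\sum_i dq_i\wedge dp_i+\pi^*\omega_C$, where $\pi^*\omega_C=\sum h_{jl}\,dy_j\wedge dy_l$ has coefficients depending only on $(q,y)$. In these coordinates the vertical bundle is $V=\mathrm{span}(\partial_{p_i})$ and $(d\pi)^{-1}(K)=\mathrm{span}(\partial_{q_i},\partial_{p_i})$.

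For part (1), I would first compute $G^\sharp=((d\pi)^{-1}(K))^\perp$ directly from the coordinate form of $\Omega$. Since under $\Omega$ each $\partial_{q_i}$ pairs only with $\partial_{p_i}$ (and conversely), a short pairing computation shows that a vector is $\Omega$-orthogonal to all the $\partial_{q_i}$ and $\partial_{p_i}$ exactly when its $\partial_{q_i}$- and $\partial_{p_i}$-components vanish; hence $G^\sharp=\mathrm{span}(\partial_{y_1},\dots,\partial_{y_{2k}})$. Flatness is then immediate, since coordinate vector fields commute, so $G^\sharp$ is involutive. Linearity follows because the horizontal lift of $\partial_{y_j}$ is again $\partial_{y_j}$, a fibrewise-constant (hence linear) vector field; equivalently, $\Omega$ has no explicit $p$-dependence, so $G^\sharp$ is invariant under the fibre scaling generated by the Euler field $\sum_i p_i\partial_{p_i}$.

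For part (2), I would compute $X_{\pi^*f}$ from $i_{X_{\pi^*f}}\Omega=d(\pi^*f)$. Writing $X_{\pi^*f}=\sum_i(A^i\partial_{q_i}+B^i\partial_{p_i})+\sum_j C^j\partial_{y_j}$ and using that $d(\pi^*f)=\sum_i\partial_{q_i}f\,dq_i+\sum_j\partial_{y_j}f\,dy_j$ has no $dp_i$-component, the pairing $\partial_{q_i}\leftrightarrow dp_i$ forces $A^i=0$, while comparing $dq_i$-coefficients gives $B^i=-\partial_{q_i}f$ and comparing $dy_l$-coefficients gives the leafwise Hamiltonian equation $i_{\sum_j C^j\partial_{y_j}}\,\omega_C=\sum_l\partial_{y_l}f\,dy_l$ along the $G$-leaves. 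The vanishing $A^i=0$ expresses that $\pi^*f$ has no vertical derivative, so $X_{\pi^*f}$ carries no component along the $K$-horizontal directions and lies in $V\oplus G^\sharp$. Its vertical part $-\sum_i\partial_{q_i}f\,\partial_{p_i}$ is fibrewise constant and is precisely $P(X_{\pi^*f})=\lambda_1(f)$, while its $G^\sharp$-part $\sum_j C^j\partial_{y_j}$ solves exactly the equation defining the leafwise Hamiltonian vector field $X_f^G$. Combining these identifications yields $X_{\pi^*f}=P(X_{\pi^*f})+(X_f^G)^{\mathrm{hor}}$.

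The pairing and Hamiltonian computations are routine; the point to handle with care is the identification of the $G^\sharp$-part of $X_{\pi^*f}$ with the genuine horizontal lift $(X_f^G)^{\mathrm{hor}}$. This hinges on the fact that, by \eqref{eq:Omegacoord}, the coefficients $h_{jl}$ of $\pi^*\omega_C$ are independent of the fibre coordinates $p$, so that the linear system determining $C^j$ is the same on $U$ as on the $G$-leaf through $\pi(x)$ and its solution is fibrewise constant. Without this fibrewise independence the $G^\sharp$-part would not be a horizontal lift, so this is the crux of the argument.
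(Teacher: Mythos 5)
Your proof is correct and takes essentially the same route as the paper: the same adapted coordinates from the proof of Proposition \ref{ref:propDGLAOP}, the same identification $G^{\sharp}=\mathrm{span}\{\partial_{y_1},\dots,\partial_{y_{2k}}\}$ from the block structure of $\Omega$ (giving flatness by involutivity and linearity by fibrewise constancy), and the same coordinate computation of $X_{\pi^*f}$, with the $p$-independence of the coefficients $h_{jl}$ of $\pi^*\omega_C$ correctly singled out as the crux for recognizing the $G^{\sharp}$-component as a genuine horizontal lift. The only discrepancy is a sign convention — the paper's $\Pi^{\sharp}=-(\omega^{\sharp})^{-1}$ yields $i_{X_H}\Omega=-dH$ and hence vertical part $+\sum_i\frac{\partial f}{\partial q_i}\frac{\partial}{\partial p_i}$, whereas you use $i_{X_{\pi^*f}}\Omega=d(\pi^*f)$ — but since you apply the same convention to $X_f^G$, the asserted decomposition is unaffected.
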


\begin{proof}
Choose coordinates $y_1,\dots,y_{2k},q_1,\dots,q_{n-k},p_1,\dots,p_{n-k}$ on $U$ as in the proof of Proposition \ref{ref:propDGLAOP}. 

(1) Equation \eqref{eq:Omegacoord} shows that at every point $x\in U$, $\Omega_{x}$ is the sum of two symplectic forms, one defined on the subspace spanned by the $\pd{p}$ and $\pd{q}$'s, the other one defined on the subspace spanned by the $\pd{y}$'s. 
As $((d\pi)^{-1}E)^{\circ}$ is spanned by the $dy$'s, we obtain 
\begin{equation}\label{eq:Gsharp}
G^{\sharp}=\mathrm{span}\{\pd{y_1},\dots,\pd{y_{2k}}\}.
\end{equation}
In other words, in the trivialization of the vector bundle $E=K^*$ given by the chosen coordinates, $G^{\sharp}$ is a trivial partial connection. 

From this we deduce that the parallel transport with respect to $G^{\sharp}$ along paths contained in a leaf of $G$ is given by linear isomorphisms between the fibres of $E$, showing that the partial connection  $G^{\sharp}$ is linear. Second, the linear partial connection  $G^{\sharp}$ is flat, since the distribution $G^{\sharp}$  is clearly involutive.

(2)
In the above coordinates, by Equation \eqref{eq:Omegacoord}, 
we have
\begin{equation*}\label{eq:xfcoord}
X_{\pi^*f}=[\Pi,\pi^*f]=\sum_i\frac{\partial f}{\partial q_i}
\frac{\partial}{\partial p_i}+
(X_f^G)^{\mathrm{hor}},
\end{equation*}
where  for the horizontal component we used \eqref{eq:Gsharp}.
Its vertical component is invariant under each of the vertical vector fields  
 $\frac{\partial}{\partial p_1},\dots,\frac{\partial}{\partial p_{n-k}}$, therefore it agrees with the vertical component at $\pi(x)\in C$, which is $P(X_{\pi^*f})$. \end{proof}
 
\begin{remark}
Our next aim is to explicitly describe the sections of $E$ which are Hamiltonian equivalent to the zero section $\iota: C\to E$. 
If $C$ is compact, we can replace Hamiltonian equivalence by base Hamiltonian equivalence, see
Definition \ref{def:baseham} and Propositions \ref{prop:implications1} and \ref{prop:compacteq}.
Recall that this means that we have to consider the time one flow of a time-dependent
vector field $X_{\pi^*f_{t}}$ where $f_{t}\in \cC^{\infty}(C)$. 
Such vector fields are not vertical in general, hence solving explicitly the ODE to find their flow is not easy. We are able to do so when $G$ is involutive, making use of the following result:
\end{remark}

\begin{lemma}\label{lem:nabla} Let $A\to M$ be a vector bundle with a linear connection $\nabla$. Let $(X_t)_{t\in [0,1]}$ be a one-parameter family of vector fields on $M$, and $(\alpha_t)_{t\in [0,1]}$ a one-parameter family of sections of $A$. Consider  the one-parameter family of vector fields on $A$ given by
$$\alpha_{t}+(X_t)^{\mathrm{hor}}$$
where $\alpha_t$ is viewed as a vertical vector field which is constant along the fibres of $A$, and $(X_t)^{\mathrm{hor}}$ is the horizontal lift of $X_t$ with respect to the connection $\nabla$.
The integral curve of $\alpha_t+(X_t)^{\mathrm{hor}}$
starting at $q\in C$ is given by 
  $$s(t)=\int_0^t \,_{\gamma(\tau)}^{\gamma(t)}\para \, [\alpha_\tau|_{\gamma(\tau)}]d\tau\in A_{\gamma(t)},$$
where   
$\para$ denotes the parallel transport with respect to $\nabla$ along the curve 
$\gamma(t):=\psi_t(q)$, and $\psi_t: C\to C$  the flow of $X_t$.
\end{lemma}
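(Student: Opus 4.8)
The plan is to reduce the integral-curve equation on the total space $A$ to a linear covariant ordinary differential equation along the base curve $\gamma$, and then to solve that equation by the variation-of-constants (Duhamel) principle. The point is that the connection $\nabla$ gives us exactly the machinery to separate the motion in the base from the motion in the fibres.

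First I would observe that the vector field $\alpha_t+(X_t)^{\mathrm{hor}}$ projects under $\pi$ to $X_t$, since $\alpha_t$ is vertical and $(X_t)^{\mathrm{hor}}$ projects to $X_t$ by definition of the horizontal lift. Hence any integral curve issuing from the fibre over $q$ projects to the integral curve $\gamma(t)=\psi_t(q)$ of $X_t$, so it necessarily has the form $t\mapsto s(t)\in A_{\gamma(t)}$; in particular the asserted membership $s(t)\in A_{\gamma(t)}$ holds automatically. Next I would use the splitting $T_aA=H_a\oplus V_a$ induced by $\nabla$ to decompose $\dot s(t)$. Its horizontal part is the horizontal lift of $\dot\gamma(t)$, which matches $(X_t)^{\mathrm{hor}}|_{s(t)}$ automatically, while its vertical part, under the canonical identification $V_{s(t)}\cong A_{\gamma(t)}$, is precisely the covariant derivative $\tfrac{D}{dt}s(t)$. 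Since the fibrewise-constant vertical vector $\alpha_t$ evaluated at $s(t)$ corresponds to $\alpha_t|_{\gamma(t)}$, the integral-curve equation is therefore equivalent to the covariant initial value problem $\tfrac{D}{dt}s(t)=\alpha_t|_{\gamma(t)}$ with $s(0)=0_q$.

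It then remains to verify that the claimed formula solves this problem. I would write $\mathcal{P}_\tau^t:={}_{\gamma(\tau)}^{\gamma(t)}\para$ and exploit its defining properties: $\mathcal{P}_t^t=\mathrm{id}$, the cocycle relation $\mathcal{P}_\sigma^t=\mathcal{P}_\tau^t\circ\mathcal{P}_\sigma^\tau$, and covariant constancy $\tfrac{D}{dt}(\mathcal{P}_\tau^t v)=0$ for fixed $v$. Factoring $\mathcal{P}_\tau^t=\mathcal{P}_0^t\circ\mathcal{P}_\tau^0$ lets me rewrite $s(t)=\mathcal{P}_0^t\,w(t)$ with $w(t):=\int_0^t\mathcal{P}_\tau^0[\alpha_\tau|_{\gamma(\tau)}]\,d\tau$ a curve in the fixed fibre $A_q$. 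Differentiating covariantly and using that $\mathcal{P}_0^t$ is parallel (a Leibniz rule verified against a parallel frame) yields $\tfrac{D}{dt}s(t)=\mathcal{P}_0^t\,\dot w(t)=\mathcal{P}_0^t\mathcal{P}_t^0[\alpha_t|_{\gamma(t)}]=\alpha_t|_{\gamma(t)}$, while $s(0)=0_q$ is immediate. Uniqueness of solutions of the linear covariant equation then identifies this $s(t)$ with the integral curve.

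The main obstacle I anticipate is the bookkeeping in the first reduction: correctly identifying the vertical part of $\dot s(t)$ with the covariant derivative $\tfrac{D}{dt}s(t)$ under the connection-induced splitting, and confirming that the horizontal part is consistently the lift of $\dot\gamma$. Once this dictionary between the total-space picture and the covariant ODE along $\gamma$ is in place, the remaining computation is the standard variation-of-constants argument for a linear equation and presents no real difficulty.
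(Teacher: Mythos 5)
Your proof is correct, and it takes a recognizably different route from the paper's, even though the two computations share the same analytic core. You first translate the integral-curve condition into a covariant ODE along the base curve: after observing that any integral curve projects to $\gamma$, you use the connection-induced splitting $T_aA=H_a\oplus V_a$ together with the standard dictionary that the vertical component of $\dot s(t)$ is the covariant derivative $\tfrac{D}{dt}s(t)$ and the horizontal component is the lift of $\dot\gamma(t)$, reducing the problem to $\tfrac{D}{dt}s=\alpha_t|_{\gamma(t)}$, $s(0)=0_q$, which you then solve by variation of constants, writing $s(t)=\mathcal{P}_0^t\,w(t)$ with $w$ a curve in the fixed fibre $A_q$ (here $\mathcal{P}_\tau^t$ is parallel transport from $\gamma(\tau)$ to $\gamma(t)$). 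The paper never invokes the covariant derivative: it differentiates the candidate formula directly in the total space, viewing $s$ as the restriction to the diagonal of the two-variable map $A(r,t)=\int_0^r \mathcal{P}_\tau^t\,[\alpha_\tau|_{\gamma(\tau)}]\,d\tau$, so that $\dot s(t_0)$ splits into a $\partial_r$-term and a $\partial_t$-term; the former is the vertical vector $\alpha_{t_0}|_{\gamma(t_0)}$ by the fundamental theorem of calculus, and the latter is shown to be horizontal, hence equal to $(X_{t_0})^{\mathrm{hor}}|_{s(t_0)}$, because for fixed upper limit the map $t\mapsto\int_0^{t_0}\mathcal{P}_\tau^t\,[\alpha_\tau|_{\gamma(\tau)}]\,d\tau$ is a parallel section of $A$ along $\gamma$. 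At bottom these are the same two ingredients in different packaging: your identity $\tfrac{D}{dt}(\mathcal{P}_0^t w)=\mathcal{P}_0^t\dot w$ encodes exactly the paper's parallelism observation, and both proofs use the fundamental theorem of calculus in a fixed fibre. What your reduction buys is conceptual clarity: the formula is recognized as the Duhamel solution of a linear inhomogeneous covariant ODE, so existence and uniqueness are standard, and your dictionary runs in both directions, so verifying the covariant ODE immediately certifies that the curve is an integral curve of the total-space field. What the paper's diagonal trick buys is economy: it verifies the asserted derivative directly from the definitions of parallel transport and horizontal lift, without setting up the covariant-derivative formalism at all. One small point of care in your write-up, which you handle correctly, is that matching the horizontal parts requires $\dot\gamma(t)=X_t|_{\gamma(t)}$, which you secure by first projecting the integral curve to the base.
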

\begin{proof}
We have $s=A\circ \Delta$, where $\Delta\colon [0,1]\to [0,1]^2$ is the diagonal map and 
$$A(r,t):= \int_0^{r} \,_{\gamma(\tau)}^{\gamma(t)}\para \, [\alpha_\tau|_{\gamma(\tau)}]d\tau.$$
Hence  $\frac{\partial}{\partial t}|_{t_0}s(t)=\frac{\partial}{\partial t}|_{t_0}A(t,t_0)+
\frac{\partial}{\partial t}|_{t_0}A(t_0,t)$ can be written as the sum of two terms, for all $t_0\in [0,1]$. The first one is the vertical vector 
$$\frac{\partial}{\partial t}|_{t_0}\int_0^t \,_{\gamma(\tau)}^{\gamma(t_0)}\para \, [\alpha_\tau|_{\gamma(\tau)}]d\tau=\alpha_{t_0}|_{\gamma(t_0)},$$
as can be seen noticing that the integrand is a curve in 
$A_{\gamma(t_0)}$, parametrized by $s$, and applying the fundamental theorem of calculus.

For the second term, we claim that 
\begin{equation}\label{eq:secondddr}
\frac{\partial}{\partial t}|_{t_0}\int_0^{t_0} \,_{\gamma(\tau)}^{\gamma(t)}\para \, [\alpha_\tau|_{\gamma(\tau)}]d\tau=
(X_{t_0})^{\mathrm{hor}}|_{s(t_0)}.
\end{equation}
 Notice that the integral on the left-hand side of Equation \eqref{eq:secondddr} is an element of the fibre of $V$ over $\gamma(t)$, hence applying $\frac{\partial}{\partial t}|_{t_0}$ we obtain an element of $T_{s(t_0)}A$ that projects to $\frac{\partial}{\partial t}|_{t_0}\gamma(t)=X_{t_0}|_{\gamma(t_0)}$ under $\pi$.
 We now argue that the left-hand side of Equation (\ref{eq:secondddr}) is a horizontal lift, which would
 conclude the statement.
Let $r,t\in [0,1]$. Under the identification $A_{\gamma(r)}\cong A_{\gamma(t)}$ given by the parallel transport $\,_{\gamma(r)}^{\gamma(t)}\para$, the elements $\,_{\gamma(\tau)}^{\gamma(r)}\para \, [\alpha_\tau|_{\gamma(\tau)}]$ and $\,_{\gamma(\tau)}^{\gamma(t)}\para \, [\alpha_\tau|_{\gamma(\tau)}]$ agree for every $\tau$. The same holds for the integral from $\tau=0$ to $\tau=t_0$ of these elements, 
since parallel transport is a linear isomorphism. Hence the integral on the left-hand side of \eqref{eq:secondddr}, as $t$ varies, defines a parallel section of $A$ over $\gamma$. Therefore, applying $\frac{\partial}{\partial t}|_{t_0}$ to it yields
 an horizontal element of $T_{s(t_0)}A$.
 \end{proof}

\begin{proposition}\label{prop:section} 
Let $C$ be a compact coisotropic submanifold that is transversally integrable, with $G$ an involutive transversal distribution. Let $(U,\Omega)$ be the corresponding local symplectic model.
Take a one-parameter family $(f_{t})_{t\in [0,1]}\in \cC^{\infty}(C)$,
and denote by $\Phi$ the time-$1$ flow of the time-dependent vector field $(X_{\pi^*f_t})_{t\in [0,1]}$. Then $\Phi(C)$ is the graph of the following section of $U\subset E$:

 $$ p\mapsto \int_0^1 \,_{\sigma(t)}^{\sigma(1)}\para \, [P(X_{\pi^*f_t})|_{\sigma(t)}]dt$$
where
$\para$ denotes the parallel transport with respect to the partial connection $G^{\sharp}$ along the curve 
$\sigma(t):=\psi_t((\psi_1)^{-1}p)$, for 
$\psi_t\colon C\to C$   the flow of $X^G_{f_t}$.
\end{proposition}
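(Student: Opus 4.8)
The plan is to combine the pointwise splitting of the Hamiltonian vector field provided by Proposition \ref{lem:flatlift} with the explicit integration formula of Lemma \ref{lem:nabla}. First, by part (2) of Proposition \ref{lem:flatlift}, for each $t$ one has the decomposition
$$X_{\pi^*f_t} = P(X_{\pi^*f_t}) + (X_{f_t}^G)^{\mathrm{hor}},$$
where $P(X_{\pi^*f_t})$ is a vertical vector field on $U\subset E$ that is constant along the fibres, and $(X_{f_t}^G)^{\mathrm{hor}}$ is the $G^\sharp$-horizontal lift of the leafwise Hamiltonian vector field $X_{f_t}^G\in\Gamma(G)$. This is exactly a time-dependent vector field of the shape $\alpha_t + (X_t)^{\mathrm{hor}}$ treated in Lemma \ref{lem:nabla}, with $A=E$, the role of $\nabla$ played by the flat linear partial connection $G^\sharp$ furnished by Proposition \ref{lem:flatlift}(1), $\alpha_t = P(X_{\pi^*f_t})$, and $X_t = X_{f_t}^G$.

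Next I would apply Lemma \ref{lem:nabla} to integrate $X_{\pi^*f_t}$ starting from a point $q$ of the zero section. Since the formula of that lemma gives $s(0)=0$, the integral curve does start on $C$, and with $\gamma(t):=\psi_t(q)$ (for $\psi_t$ the flow of $X_{f_t}^G$) it reads
$$s_q(t)=\int_0^t \,_{\gamma(\tau)}^{\gamma(t)}\para\,[P(X_{\pi^*f_\tau})|_{\gamma(\tau)}]\,d\tau \in E_{\gamma(t)}.$$
Two points must be checked here. First, because $X_{f_t}^G$ is a section of $G$, its flow $\psi_t$ preserves the leaves of $G$, so the base curves $\gamma$ remain inside single $G$-leaves; consequently only parallel transport of $G^\sharp$ along $G$-directions is ever invoked, and the proof of Lemma \ref{lem:nabla} applies verbatim to this partial connection even though the lemma is phrased for a full linear connection. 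Second, the compactness of $C$ ensures that $\psi_t$, and hence the flow $\Phi$ on $U$, is defined for all $t\in[0,1]$, so that $(\psi_1)^{-1}$ in the statement makes sense.

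Finally I would rewrite $\Phi(C)$ as a graph. Evaluating at $t=1$ yields $\Phi(0_q)=s_q(1)\in E_{\psi_1(q)}$, so $\pi\circ\Phi$ restricted to the zero section equals the diffeomorphism $\psi_1$ of $C$; hence $\Phi(C)$ projects diffeomorphically onto $C$ and is the graph of a section. To read off the value of this section over a point $p\in C$, I set $q=(\psi_1)^{-1}(p)$, so the base curve becomes $\sigma(t)=\psi_t((\psi_1)^{-1}(p))$ with $\sigma(1)=p$, and the section value is
$$s_q(1)=\int_0^1 \,_{\sigma(t)}^{\sigma(1)}\para\,[P(X_{\pi^*f_t})|_{\sigma(t)}]\,dt\in E_p,$$
which is precisely the asserted formula. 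I expect the main point requiring care to be the transfer of Lemma \ref{lem:nabla} to the \emph{partial} connection $G^\sharp$: one must confirm that the non-vertical full Hamiltonian flow $\Phi$ is genuinely captured by the integration formula, and this hinges on verifying that the relevant parallel transport takes place only along $G$-leaves (where $G^\sharp$ is defined). Once the decomposition of Proposition \ref{lem:flatlift} and this leafwise observation are in place, the remaining work is the bookkeeping reparametrization $q=(\psi_1)^{-1}(p)$.
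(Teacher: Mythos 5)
Your proof is correct and follows essentially the same route as the paper: the decomposition $X_{\pi^*f_t}=P(X_{\pi^*f_t})+(X_{f_t}^G)^{\mathrm{hor}}$ from Proposition \ref{lem:flatlift}(2) fed into Lemma \ref{lem:nabla}, where your leafwise observation is precisely how the paper justifies using the partial connection --- it restricts to the bundle $E\vert_L\to L$ over the $G$-leaf $L$ through $p$, on which $G^{\sharp}$ is a genuine linear connection. The final bookkeeping (choosing $q=(\psi_1)^{-1}(p)$ and evaluating at $t=1$) likewise matches the paper's conclusion.
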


\begin{proof}
By Proposition \ref{lem:flatlift}, $G^{\sharp}$
is a 
 partial linear connection  on $U\subset E$, and $X_{\pi^*{f_t}}=P(X_{\pi^*f_t})+(X_{f_t}^G)^{\mathrm{hor}}$.
 We note that, in particular,
this vector field covers   
$X^G_{f_t}$, which is tangent to the leaves of $G$.
Fix $p\in C$, and let $L\subset C$ be the leaf of $G$ through $p$.
Consider the vector bundle $E\vert_{L}\to L$, equipped with the linear connection obtained
 by restricting $G^{\sharp}$.
 We apply Lemma \ref{lem:nabla} to the 
 one-parameter family of vector fields $(X^G_{f_t})|_L$ and to the one-parameter family of sections $P(X_{\pi^*f_t})|_L$.
Choosing the point $q$ so that $\psi_1(q)=p$ and setting
 $t=1$ finishes the proof.
\end{proof}

\begin{remark}
\hspace{0cm}
\begin{enumerate}
 \item We observe that Propositions \ref{lem:flatlift} and \ref{prop:section} continue to hold for symplectomorphisms,
i.e. one obtains explicit formulae for the symplectic vector field associated to a closed $1$-form obtained via
pull-back from the base $C$, as well as for the image of $C$ under the flow of such a vector field.
\item When $C$ is Lagrangian,  $U$ is open in the cotangent bundle $T^*C$, hence $X_{\pi^*f_t}$ is a (constant) vertical vector field and $P(X_{\pi^*f_t})=df_t$. 
 Further $G=\{0\}$, so the curve $\sigma$ through $p$ is constant. Therefore we recover the well-known result that $\Phi(C)$ is the graph of the exact one-form $d(\int_0^1 f_t dt)$.
\end{enumerate}

\end{remark}

 We exemplify the above discussion in the case of $C$ hypersurface, i.e. of co-dimension $1$.
 While all smooth deformations of a co-dimension $1$ submanifold are automatically coisotropic, it turns out that the equivalence problem is non-trivial.

\begin{example}
Fix a codimension $1$ compact submanifold $C$ of $(M,\omega)$, which we assume to be oriented.
The annihilator $TC^{\circ}\cong K$ is a trivial line bundle, so there is $\alpha\in \Omega^1(C)$ such that $G:=\ker(\alpha)$ satisfies $G\oplus E=TC$. As usual $K$ is the characteristic distribution of $C$, i.e., $K:=\ker(\omega_C)$.  
We \emph{assume} that $d\alpha=0$, which in particular implies that $G$ is involutive. 
By \cite[Exercise 3.36]{McSalTop} a tubular neighborhood of $C$ in $M$ is symplectomorphic to
 $$(U,\Omega):=(C\times I,  \pi^*\omega_C-du\wedge \pi^*\alpha),$$
where $I$ is an open interval containing $0$, $u$ the standard coordinate on $I$, and $\pi\colon C\times I\to C$ is the projection. In the following we denote by $\hat{\xi}$  the unique vector field on $C$ lying in $K$ such that $\alpha(\hat{\xi})=1$.

Take a  one-parameter family $f_t\in \cC^{\infty}(C)$,
and denote by $\Phi$ the time-$1$ flow of the vector field $(X_{\pi^*f_t})_{t\in[0,1]}$. Then  $\Phi(C)$ is the graph of 
$$s \colon C\to \RR, \quad s(p)=-\int_0^1 \hat{\xi}(f_{t})|_{\psi_t((\psi_1)^{-1}p)}dt$$
where   
$\psi_{\tau}\colon C\to C$ is the flow of $(X^G_{f_t})_{t\in [0,1]}$.
This follows from Prop. \ref{prop:section}, since 
$X_{\pi^*f}= -\hat{\xi}(f_{t})\pd{u}+X_f^G$ at points of $C$, and 
$G^{\sharp}$ is the trivial partial connection by Equation 
\eqref{eq:Gsharp}.
\end{example}

\appendix
\section{{Fibrewise entire Poisson structures}}\label{appendix: fibrewise entire}
 
In the body of the paper we worked with symplectic structures, but most of the results extend to fibrewise entire Poisson structures, as defined in \cite{OPPois}. More precisely, we assume the following set-up in this appendix:

 \begin{center}
\fbox{
\parbox[c]{12.6cm}{\begin{center} 
$U$ is a tubular neighborhood of the zero section in a vector bundle $E\to C$,\\
$\Pi$ is a fibrewise entire Poisson structure on $U$, such that the zero section $C$ is coisotropic.
\end{center}
}}
\end{center}

Apart from the symplectic case, an interesting example is when $E$ is the dual of a Lie algebroid $(A,\rho,[\cdot,\cdot])$ and $\Pi$ the canonical Poisson structure defined there. As described in \cite[Remark 4.5]{Ji}, one can furthermore enhance this example as follows:
given a Lie subalgebroid $B\hookrightarrow A$, its fibrewise annihilator $B^\circ \subset E$ is a coisotropic submanifold. 
If the Lie algebroid structure varies in an analytic fashion along the normal bundle to the base of $B$,
one can find a tubular neighborhood of $B^\circ \subset E$ such that $\pi$ becomes fibrewise entire.
\\

The results obtained in Sections \ref{section: coisotropic submanifolds} and \ref{section: Ham diffeos} continue to hold if one replaces the Lie algebroid $K=\mathrm{ker}\omega_C$, which is no longer defined, with $(TC)^{\circ}=E^*$, the Lie algebroid associated to the coisotropic submanifold $C$ of $(U,\Pi)$. Consequently, one has to replace the
foliated de Rham complex  $\Omega_\mathfrak{F}(C)$ with the complex $(\Gamma(\wedge E),P([\Pi,-])$.
 Many of the proofs in the main body of the article are already formulated in this setting, and some of them actually simplify in the fibrewise entire Poisson case (for instance Corrollary  \ref{prop:trivialclass}).\\

Concerning Section \ref{section: symplectomorphisms}, we 
replace ``symplectomorphisms'' in Def. \ref{def:symeq} by ``Poisson diffeomorphisms'', and denote the resulting moduli space by 
$\Moduli^\Pois_U(C)$.
The description of
    the tangent space at zero to this moduli space is now characterized in terms of Lie algebroid cohomology, as we explain in the next remark:

\begin{remark}
The tangent space at zero to $\Moduli^\Pois_U(C)$
 is isomorphic to the quotient
\begin{equation}\label{eq:cokerone}
\frac{\{s\in \Gamma(E):P([\Pi,s])=0\}}{\{P(Y): Y \text{ is a Poisson vector field on } U\}}.
\end{equation}

Indeed the numerator is the formal tangent space to $\Defor_U(C)$ by the proof of Proposition \ref{prop:EK*}. For the denominator, we argue as follows: if $Y_t$ is a one-parameter family of Poisson vector fields on $U$, and $s_t\in \Gamma(U)$ is such that the graph of $s_t$ is the image of the zero section under the time-$t$ flow of $Y_t$, then  $\frac{\partial s_t}{\partial t}\vert_{t=0}=P(Y_{0})$ by Lemma \ref{lem:techeasy}, and notice that this argument can be reversed. 

We can describe \eqref{eq:cokerone} as the cokernel of a certain map in cohomology, by finding the analog of Equation \eqref{eq:coker} that holds in the Poisson case.
We have a map $$P\colon \chi^{\bullet}(U)=\Gamma(\wedge T^*U)\to \Gamma(\wedge E)$$ between the complexes of ``forms'' for the Lie algebroid $T^*U$ on one side (the cotangent Lie algebroid of the Poisson manifold $(U,\Pi)$) and the Lie algebroid $E=(TC)^{\circ}$ on the other (the Lie algebroid of the coisotropic submanifold $C$).
The differentials are preserved, since for all $Y\in \chi^{\bullet}(U)$ we have
$P[\Pi,Y]=P[\Pi,PY]$, as a consequence of the relation $P[x,y]=P[Px,y]+P[x,Py]$ that holds in the general setting of Voronov's derived brackets. Another way to see this is to notice that $P$ is the cochain map associated to a Lie algebroid morphism, namely the inclusion of  $(TC)^{\circ}$ in $T^*U$.

Hence we obtain a map in cohomology 
$$P\colon H_{LA}(T^*U)=H_{\Pi}(U)\to H_{LA}((TC)^{\circ})$$
between the Lie algebroid cohomology of $T^*U$ (i.e. the Poisson cohomology of $(U,\Pi)$) and the Lie algebroid cohomology of $(TC)^{\circ}$. Its cokernel agrees with
  \eqref{eq:cokerone} by a linear algebra argument as in Remark \ref{rem:cokernel},
which uses the fact that for any function $F$ on $U$ we have $P[\Pi,F]=P[\Pi,F|_C]$. 
\end{remark}

Let us finally point out the place where the case of fibrewise entire Poisson structures deviates most seriously from the symplectic case:
it is no longer obvious that Poisson vector fields can be replaced by fibrewise entire ones. Therefore we cannot establish Proposition \ref{prop:implications1 - sym}, and consequently neither 
 Theorem \ref{cor:3agree - sym} nor Theorem \ref{theorem: symplectic equivalence} carry over.

\bibliographystyle{habbrv} 
\bibliography{EquivCoiso}

\end{document}